\setlist[enumerate]{
label=\textnormal{({\roman*})},
ref={\roman*}}
\def\th@plain{%
  \thm@notefont{}
  \itshape 
}
\def\th@definition{%
  \thm@notefont{}
  \normalfont 
}
\newtheorem*{rep@theorem}{\rep@title}
\newcommand{\newreptheorem}[2]{%
\newenvironment{rep#1}[1]{%
 \def\rep@title{#2 \ref{##1}}%
 \begin{rep@theorem}}%
 {\end{rep@theorem}}}
\newtheorem{theorem}{Theorem}[section]
\newtheorem{lemma}[theorem]{Lemma}
\newtheorem{proposition}[theorem]{Proposition}
\newtheorem{corollary}[theorem]{Corollary}
\newtheorem{conjecture}[theorem]{Conjecture}
\newtheorem{problem}[theorem]{Problem}
\theoremstyle{remark}
\newtheorem*{remark}{Remark}
\theoremstyle{definition}
\newtheorem{definition}[theorem]{Definition}
\newcommand*{\da@rightarrow}{\mathchar"0\hexnumber@\symAMSa 4B }
\newcommand*{\da@leftarrow}{\mathchar"0\hexnumber@\symAMSa 4C }
\newcommand*{\xdashrightarrow}[2][]{%
  \mathrel{%
    \mathpalette{\da@xarrow{#1}{#2}{}\da@rightarrow{\,}{}}{}%
  }%
}
\newcommand{\xdashleftarrow}[2][]{%
  \mathrel{%
    \mathpalette{\da@xarrow{#1}{#2}\da@leftarrow{}{}{\,}}{}%
  }%
}
\newcommand*{\da@xarrow}[7]{%
  \sbox0{$\ifx#7\scriptstyle\scriptscriptstyle\else\scriptstyle\fi#5#1#6\m@th$}%
  \sbox2{$\ifx#7\scriptstyle\scriptscriptstyle\else\scriptstyle\fi#5#2#6\m@th$}%
  \sbox4{$#7\dabar@\m@th$}%
  \dimen@=\wd0 %
  \ifdim\wd2 >\dimen@
    \dimen@=\wd2 %
  \fi
  \count@=2 %
  \def\da@bars{\dabar@\dabar@}%
  \@whiledim\count@\wd4<\dimen@\do{%
    \advance\count@\@ne
    \expandafter\def\expandafter\da@bars\expandafter{%
      \da@bars
      \dabar@ 
    }%
  }%
  \mathrel{#3}%
  \mathrel{%
    \mathop{\da@bars}\limits
    \ifx\\#1\\%
    \else
      _{\copy0}%
    \fi
    \ifx\\#2\\%
    \else
      ^{\copy2}%
    \fi
  }%
  \mathrel{#4}%
}
\newcommand{\overrightharpoon}{%
  \mathpalette{\overarrow@\rightharpoonfill@}}
\def\rightharpoonfill@{\arrowfill@\relbar\relbar\rightharpoonup}
\newcommand{\osh}{\mathpalette{\overarrowsmall@\rightharpoonfill@}}
\def\rightharpoonfill@{\arrowfill@\relbar\relbar\rightharpoonup}
\newcommand{\overarrowsmall@}[3]{%
  \vbox{%
    \ialign{%
      ##\crcr
      #1{\smaller@style{#2}}\crcr
      \noalign{\nointerlineskip}%
      $\m@th\hfil#2#3\hfil$\crcr
    }%
  }%
}
\def\smaller@style#1{%
  \ifx#1\displaystyle\scriptstyle\else
    \ifx#1\textstyle\scriptstyle\else
      \scriptscriptstyle
    \fi
  \fi
}
\newcommand{\mylabel}[2]{#2\def\@currentlabel{#2}\label{#1}}
\DeclareMathOperator{\Ac}{\mathcal{A}} 
\DeclareMathOperator{\Dc}{\mathcal{D}}
\DeclareMathOperator{\Eb}{\mathbb{E}} 
\DeclareMathOperator{\Fc}{\mathcal{F}} 
\DeclareMathOperator{\ft}{\mathsf{f}} 
\DeclareMathOperator{\Hc}{\mathcal{H}} 
\DeclareMathOperator{\Hor}{\mathcal{H}} 
\DeclareMathOperator{\IID}{\mathsf{IID}} 
\DeclareMathOperator{\IUD}{\mathsf{IUD}} 
\DeclareMathOperator{\MC}{\mathcal{M}} 
\DeclareMathOperator{\Nb}{\mathbb{N}} 
\DeclareMathOperator{\Ngh}{{Ngh}} 
\DeclareMathOperator{\Oc}{\mathcal{O}} 
\DeclareMathOperator{\Pb}{\mathbb{P}} 
\DeclareMathOperator{\Rb}{\mathbb{R}} 
\DeclareMathOperator{\rec}{\textnormal{rec}} 
\DeclareMathOperator{\satu}{\mathbbm{1}} 
\DeclareMathOperator{\Stack}{\mathsf{Stack}} 
\DeclareMathOperator{\Ver}{\mathcal{V}} 
\DeclareMathOperator{\xb}{\mathbf{x}} 
\DeclareMathOperator{\Xb}{\mathbf{X}} 
\DeclareMathOperator{\Yb}{\mathbf{Y}} 
\DeclareMathOperator{\wt}{\mathsf{wt}} 
\DeclareMathOperator{\Zb}{\mathbb{Z}} 
\def\.{\hskip.06cm}
\def\ts{\hskip.03cm}
\title{Recurrence of horizontal-vertical walks}
\author{Swee Hong Chan}
\date{\today}
 \address{Department of Mathematics, UCLA, Los Angeles, CA.}
\email{\url{sweehong@math.ucla.edu}}
 \thanks{Department of Mathematics, UCLA. Partially supported by NSF grant DMS-1455272.}
\begin{document}
 	
\begin{abstract}
	Consider a  nearest neighbor random walk on the two-dimensional integer lattice,
	where each vertex is initially labeled  either `H' or `V', uniformly and independently.
	At each discrete time step, the walker resamples the label at its current location (changing `H' to `V' and `V' to `H' with probability $q$).
	Then, it takes a mean zero horizontal step if the new label is `H', and a mean zero vertical step if the new label is `V'.  
	This model is a randomized version of the deterministic rotor walk, for which 
	its recurrence (i.e., visiting every vertex infinitely often with probability 1) in two dimensions is still an  open problem.
	We answer the analogous question for  the horizontal-vertical walk, by showing that the horizontal-vertical walk  is recurrent   for  $q \in (\frac{1}{3},1]$.
\end{abstract}

\keywords{recurrence, transience, random walk, random environment, rotor-router}

\subjclass[2020]{Primary: 60K35,  Secondary: 60F20, 60J10, 82C41} 
 \maketitle

\section{Introduction}\label{secintro}

In an $\Hor$--$\Ver$ walk,
each vertex of $\Zb^2$ is labeled  either $\Hor$ or $\Ver$,  with $n$ walkers initially located at the origin.  At each discrete time step, a walker is chosen following a cyclic turn order.
This walker resamples the label at its current location (changing $\Hor$ to $\Ver$, and $\Ver$  to $\Hor$, with probability $q \in (0,1]$, independent of the past), and then takes a mean zero horizontal step if the new label is $\Hor$,  and a mean zero vertical step if the new label is $\Ver$.  
We will show that this walk is \emph{recurrent} (i.e., every vertex  gets visited infinitely many times a.s.) under various scenarios~(see Theorem~\ref{thmrecurrence many walker}--\ref{thmrecurrence IID} below).


Our study of $\Hor$--$\Ver$ walks is motivated by \emph{rotor walks}, which are   deterministic versions of the simple random walk:
Each vertex of $\Zb^2$ is labeled with an arrow that is pointing to one of its neighbors.
At each discrete time step, a walker turns the arrow at its current location 90-degree clockwise, and the walker moves to the neighbor specified by the new arrow.
It is a longstanding open problem~\cite{PDDK} to determine if the rotor walk with a single walker, with the initial arrow at each vertex  $x$ being independent and pointing to a uniform random neighbor of $x$, is recurrent.
It is thus slightly  surprising that the analogous result can be proved for $\Hor$--$\Ver$ walks (see Theorem~\ref{thmrecurrence IID}), which can be attributed to the extra randomness in  
$\Hor$--$\Ver$ walks (compare with \cite{GMV}, where   extra randomness helps in making the recurrence problem for the mirror model analyzable).


{\renewcommand{\arraystretch}{-10}
	\begin{figure}[hbt!]
		\begin{tabular}{c c c}
			\hspace{-0.5 cm}
			\includegraphics[scale=0.38]{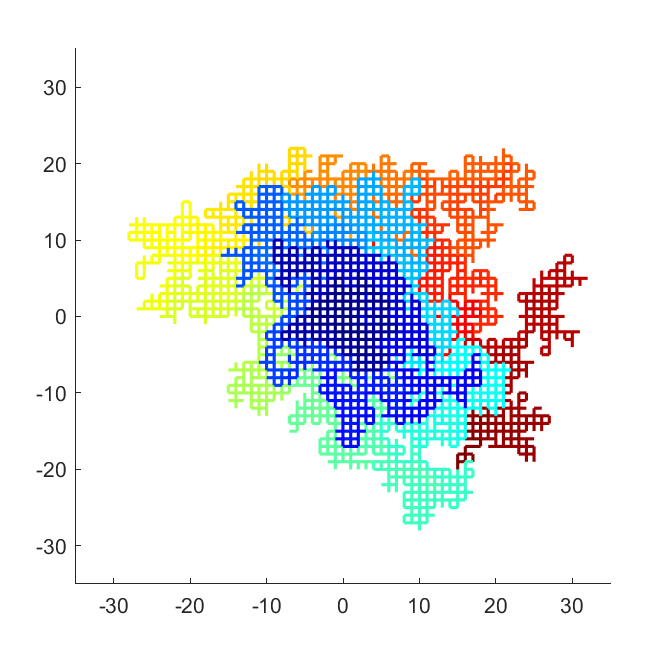} & \hspace{-0.25cm}  
			\includegraphics[scale=0.38]{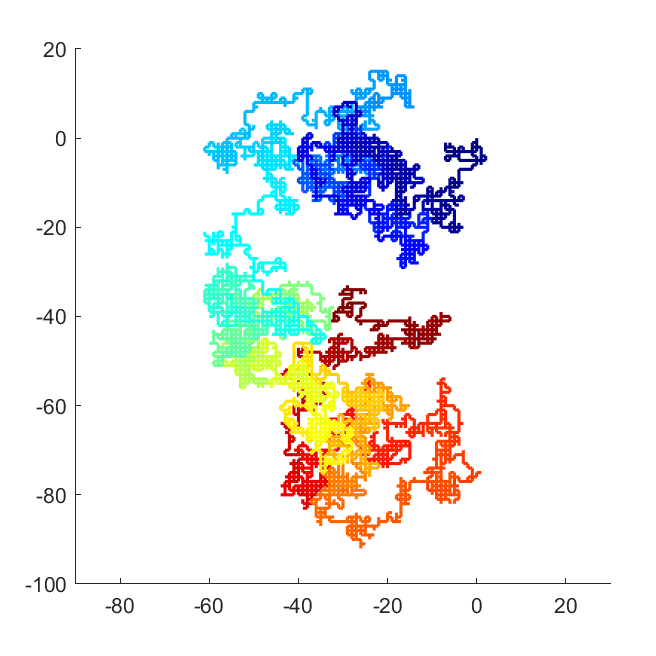} & \hspace{-0.25cm} 
			\includegraphics[scale=0.38]{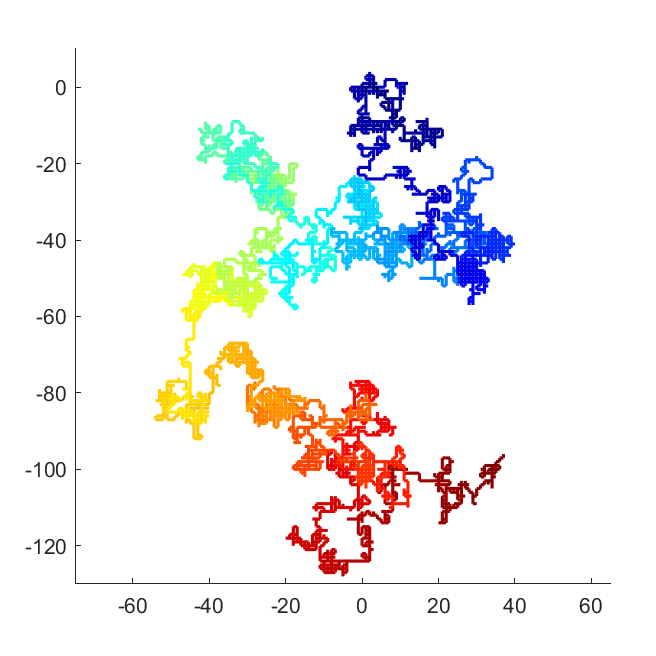}
		\end{tabular}
		\caption{The vertices  visited by the 10,000-step single walker version of   the   
			rotor walk (left),  the $\Hor$--$\Ver$ walk with $q=1$ (middle), and  the simple random walk (right) on $\mathbb{Z}^2$;
			these processes are ordered in increasing amount of randomness.
			Each edge is colored according to the time of its first visit by the walker. }
		\label{figure: simple random walk vs rotor walk}%
	\end{figure}
}

The one-dimensional counterpart of $\Hor$--$\Ver$ walks, called \emph{$p$-rotor walks} on $\Zb$, was  studied in~\cite{HLSH},
where each vertex is labeled $L$ (left) or $R$ (right), and is changed to the opposite label with probability $p$ whenever the vertex is visited.
The $p$-rotor walk is  a special case of  \emph{excited random walks with Markovian cookie stacks}~\cite{KP}, where the labels  evolve following the transition rules of a prescribed Markov chain. 
The recurrence and transience of these two models were studied in both works and are now completely understood. 
On the other hand, until recently, very little was known for the recurrence and transience in higher dimensions.
From this perspective, 
this paper aims to begin extending their works to   higher dimensions, for which some standard methods for $\Zb$ (e.g., generalized Ray-Knight theory~\cite{Tot}) cannot be applied anymore.

In analyzing $\Hor$--$\Ver$ walks, we take our inspiration from the theory of random walks in random environment~(see e.g., \cite{Zei,Szn}), in which the environment (i.e., the labels at each vertex) affects the motion of the walker, but the walker does not affect the environment. 
In contrast,  with $\Hor$--$\Ver$ walks, the environment evolves \emph{in tandem with} the motion of the walkers.
This   necessitates a different approach for proving  recurrence, 
as  common approaches in    random walks in random environments (e.g., Nash-Williams inequality, see~\cite[Lemma~A.2]{Zei}) cannot be applied to nonstatic environments.

\medskip

\subsection{Main results}
We now present the main results of this paper.

\smallskip

\begin{theorem}\label{thmrecurrence many walker}
	Let \. $q >0$ \.  and \. $n \geq \lfloor \frac{|4q-2|}{q} \rfloor + 1$\..
	Then, for every choice of the initial labels at each vertex, the $\Hor$--$\Ver$ walk with $n$ walkers   is recurrent.
\end{theorem}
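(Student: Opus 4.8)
The plan is to realize the $n$-walker $\Hor$--$\Ver$ walk as a deterministic abelian process on a presampled random environment and then to rule out transience by a discrete flow (divergence) argument on large boxes, the hypothesis $n\ge\lfloor|4q-2|/q\rfloor+1$ entering through a bound on a per-site ``orientation imbalance.'' For each vertex $x$ I would presample an i.i.d.\ sequence of instructions, each consisting of a flip bit ($=1$ with probability $q$) and an independent uniform sign; the label of $x$ at its $k$-th use is then $\ell_0(x)$ toggled by the parity of the first $k$ flip bits, and the walker's step on that use is the matching $\pm e_1$ or $\pm e_2$. With the instructions frozen the walk becomes deterministic, and the usual exchange argument makes it abelian: the multiset of walker positions and the odometer $N(x)$ (the number of departures from $x$) after running any monotone stopping rule---e.g.\ ``every walker has exited $B_R:=[-R,R]^2\cap\Zb^2$''---are independent of the order in which walkers are advanced. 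Two conservation laws feed the argument: $\sum_i|X_i(T)|^2-T$ is a martingale (a mean-zero lattice step has second moment $1$, whatever the label), so the total number of departures from $B_R$ is of order $R^2$; and for $h$ in the span of $1,x_1,x_2,x_1x_2$---the functions harmonic separately in each coordinate---$\sum_i h(X_i(T))$ is a martingale, since such $h$ is preserved in expectation by a mean-zero horizontal \emph{or} vertical step, whatever the label.

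Now suppose recurrence fails. Using connectedness of $\Zb^2$, the common starting point, and the abelian restart property, one upgrades this to a positive-probability event on which every walker eventually leaves $B_R$ forever for all large $R$; there every site has finite odometer and the net flow out of $B_R$ equals $n$. With $u(x\to y)$ the number of $x\to y$ crossings, summation by parts against a test function $h$ gives $\sum_{x\to y}u(x\to y)\bigl(h(x)-h(y)\bigr)=n\,h(o)$ up to a boundary term, and taking $h$ assembled from the separately harmonic functions together with $x_1^2-x_2^2$ isolates a weighted sum of the orientation imbalance $D(x):=u(x\to x+e_1)+u(x\to x-e_1)-u(x\to x+e_2)-u(x\to x-e_2)$. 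For the two-state label chain the expected contribution to $D(x)$ from the $k$-th visit is $\pm(1-2q)^k$, so over all (finitely many) visits it telescopes to $\pm\tfrac{1-2q}{2q}$ up to an exponentially small tail; hence the $n$ units of escaping flow would have to be manufactured from a mean orientation imbalance of size at most $\tfrac{|1-2q|}{2q}=\tfrac{|4q-2|}{4q}$ per site and per direction. Weighing the escape requirement against this bound, and using integrality of $n$, forces $n\le|4q-2|/q$, contradicting the hypothesis (consistent with the fact that at $q=\tfrac12$ the resampled label is uniform, the walk is simple random walk, and the imbalance vanishes identically); so the walk is recurrent.

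The delicate point is not this bounded mean defect but the control of fluctuations. The per-visit contributions to $D(x)$ fluctuate at scale $\sqrt{N(x)}$, and summed over the $\Theta(R^2)$ sites of $B_R$ against the test weights they must be shown to stay of lower order than the boundary side; worse, for adversarial initial labels $\ell_0$ the aggregate imbalance before any cancellation is itself far too large, so the argument must genuinely exploit the oscillation of the test weight $x_1x_2$ (and of the pattern of $\ell_0$, which enters through the geometry of the trajectory) over $B_R$, together with the independence of the presampled instructions at distinct sites. The difficulty is that the odometer $N(x)$ is a functional of the whole trajectory, so the instructions one wishes to average are coupled to the region being summed; I expect the only workable route is to recast the estimate as a martingale (Freedman-type) bound run \emph{along} the walk, with a stopping-time decomposition of each site's odometer, rather than as a static sum over $B_R$. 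This is where the bulk of the work lies, and where the precise threshold $n>|4q-2|/q$ gets pinned down.
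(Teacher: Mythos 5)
Your sketch stalls exactly where the theorem lives, and you say so yourself: the ``control of fluctuations'' and the adversarial initial labels are left as the part ``where the bulk of the work lies,'' so the threshold $n>\frac{|4q-2|}{q}$ is never actually established. Worse, the way you have set the argument up makes that missing step genuinely hard rather than technical. Your test function $x_1^2-x_2^2$ has anisotropic second difference of constant size, so the bulk term you need to control is a sum of order $R^2$ many per-site contributions, each with mean $O\bigl(\frac{|1-2q|}{q}\bigr)$ but with fluctuations of order $\sqrt{N(x)}$ coupled to the (label-dependent) odometer; this is the same order as the boundary term, and for adversarial $\rho$ the pre-cancellation imbalance is, as you note, too large. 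The paper avoids this entirely by choosing the test function to be the potential kernel $a$ of $\Zb^2$: its anisotropic second differences $\ft_{\Hor},\ft_{\Ver}$ decay like $|x|^{-2}$ (Lemma~\ref{lemweight function asymptotic}), so the label-correction term in the martingale of Definition~\ref{defmartingale}, namely $\frac{2q-1}{q}\sum_x\bigl(\wt(x,t)-\wt(x,0)\bigr)$, is bounded \emph{deterministically and worst-case over all labelings} by $\frac{|2q-1|}{q}\cdot 2\sum_{x\in B_{r+1}}|\ft(x)|=\frac{|4q-2|}{q}\cdot\frac{2}{\pi}\ln r+O(1)$ (Lemma~\ref{lemweight function sum}), which is exactly comparable to the value $\frac{2}{\pi}\ln r$ of $a$ on $\partial B_r$ (Theorem~\ref{thmpotential kernel}). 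Consequently no fluctuation estimate, no Freedman-type bound, and no averaging over the presampled instructions is needed: optional stopping at $\tau_{k,r}$ (Lemma~\ref{lemreturn probability lower bound}) already gives $\lim_{r}p_{k,r}(\rho)\ge 1-\frac{|4q-2|}{nq}$ uniformly in $k$ and in $\rho$.

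A second gap is the passage from ``positive probability of escape'' to transience and back. You invoke a positive-probability event on which all walkers leave every large box forever, but turning a failure of recurrence into such an event, and then turning a uniformly positive lower bound on return probabilities into almost-sure recurrence, is precisely what the paper's zero-one law (Proposition~\ref{proposition: zero-one law recurrence}) does, and its proof requires the stack-walk machinery of Section~2 (the recurrence--transience dichotomy and the invariance of recurrence under finitely many popping operations, Theorem~\ref{theorem: recurrence is invariant under stack relation}); your appeal to abelianness of the frozen dynamics gestures at this but does not supply it. Finally, your claim that the per-site telescoped mean defect $\frac{|1-2q|}{2q}$, weighed against the escaping flow of $n$ walkers, ``forces $n\le|4q-2|/q$'' is asserted without the accounting; with the quadratic test function the geometric weighting of sites is uniform rather than the $\frac{\cos(2\arg x)}{2\pi|x|^2}$ weighting that the potential kernel provides, and it is not clear the constants would reproduce the stated threshold even if the fluctuation problem were solved. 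In short: the strategy would essentially have to be repaired by replacing $x_1^2-x_2^2$ with the potential kernel and packaging the label correction into an exact martingale --- which is the paper's proof.
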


\smallskip

Described in words, excepting the case $q=0$ (i.e., when the environment is not altered by the walkers),
\. $\Hor$--$\Ver$ walks can always be made recurrent by adding a fixed number of walkers.
In this respect Theorem~\ref{thmrecurrence many walker}  is the best outcome one could hope for as  for $q=0$ one can choose the initial labels  so that the $\Hor$--$\Ver$ walk is never recurrent regardless of the number of walkers (see Section~\ref{subsecq0}).

As a consequence of Theorem~\ref{thmrecurrence many walker}, we  have the following corollary  for $\Hor$--$\Ver$ walks with a single walker.


\smallskip

\begin{corollary}\label{correcurrence single walker}
	Let $q \in (\frac{2}{5},\frac{2}{3})$.
	Then, for every choice of the initial labels at each vertex, the $\Hor$--$\Ver$ walk with a single walker  is recurrent.
\end{corollary}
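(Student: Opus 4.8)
The plan is to derive Corollary~\ref{correcurrence single walker} directly from Theorem~\ref{thmrecurrence many walker} by checking that, for every $q$ in the stated open interval, the threshold number of walkers $\lfloor \tfrac{|4q-2|}{q}\rfloor + 1$ equals $1$, i.e.\ that $\lfloor \tfrac{|4q-2|}{q}\rfloor = 0$. This is a one-variable inequality chase: we must show $0 \le \tfrac{|4q-2|}{q} < 1$ for all $q\in(\tfrac25,\tfrac23)$, and conversely note that this is exactly the range on which that floor vanishes.

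The natural first step is to split on the sign of $4q-2$. For $q \in (\tfrac25,\tfrac12]$ we have $4q-2 \le 0$, so $\tfrac{|4q-2|}{q} = \tfrac{2-4q}{q} = \tfrac{2}{q} - 4$; requiring this to be strictly below $1$ gives $\tfrac{2}{q} < 5$, i.e.\ $q > \tfrac25$, which holds by hypothesis, and it is clearly nonnegative on this subinterval. For $q \in [\tfrac12,\tfrac23)$ we have $4q-2 \ge 0$, so $\tfrac{|4q-2|}{q} = \tfrac{4q-2}{q} = 4 - \tfrac{2}{q}$; requiring this to be strictly below $1$ gives $\tfrac{2}{q} > 3$, i.e.\ $q < \tfrac23$, again true by hypothesis, and it is nonnegative since $q \ge \tfrac12$ forces $4 - \tfrac2q \ge 0$. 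In both cases $\tfrac{|4q-2|}{q} \in [0,1)$, hence $\lfloor \tfrac{|4q-2|}{q}\rfloor = 0$ and the threshold in Theorem~\ref{thmrecurrence many walker} is $n \ge 1$.

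With this arithmetic in hand, the conclusion is immediate: for any $q\in(\tfrac25,\tfrac23)$ and any choice of initial labels, a single walker already satisfies $n = 1 \ge \lfloor \tfrac{|4q-2|}{q}\rfloor + 1$, so Theorem~\ref{thmrecurrence many walker} applies verbatim and the $\Hor$--$\Ver$ walk with one walker is recurrent. I would present the proof as these two short sign-case computations followed by a single invocation of the theorem.

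There is essentially no obstacle here; the only thing to be careful about is the boundary behavior. At $q=\tfrac12$ the quantity $\tfrac{|4q-2|}{q}$ is $0$, consistent with both cases. At the endpoints $q=\tfrac25$ and $q=\tfrac23$ the expression equals exactly $1$, so the floor jumps to $1$ and the two-walker threshold kicks in — this is precisely why the corollary's interval is open, and it is worth a one-line remark confirming that the interval $(\tfrac25,\tfrac23)$ is the maximal one for which the single-walker reduction goes through.
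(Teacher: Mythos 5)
Your proposal is correct and is exactly the intended derivation: the paper treats Corollary~\ref{correcurrence single walker} as an immediate consequence of Theorem~\ref{thmrecurrence many walker}, since for $q\in(\tfrac{2}{5},\tfrac{2}{3})$ one has $\tfrac{|4q-2|}{q}<1$, hence $\lfloor \tfrac{|4q-2|}{q}\rfloor+1=1$ and the theorem applies with $n=1$. Your sign-split computation and the remark about the endpoints match the (unwritten) arithmetic the paper relies on.
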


\smallskip

When the labels are sampled from the $\IID$ uniform measure on $\{\Hor, \Ver\}$, 
the recurrence regime in Corollary~\ref{correcurrence single walker} can be expanded even further.


\smallskip

\begin{theorem}\label{thmrecurrence IID}
	Let $q \in (\frac{1}{3},1]$, and let the initial labels be drawn independently and uniformly from \. $\{\Hor, \Ver\}$\..
	Then
	the  $\Hor$--$\Ver$ walk with a single walker is recurrent a.s..
\end{theorem}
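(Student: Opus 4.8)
The plan is to derive the theorem from Theorem~\ref{thmrecurrence many walker} by using the independence of the initial labels to make up for running a single walker rather than the $n_0:=\lfloor\frac{|4q-2|}{q}\rfloor+1$ walkers required there. First I would record a zero--one law: under the $\IID$ uniform measure the probability that the single-walker $\Hor$--$\Ver$ walk is recurrent is $0$ or $1$, since after any finite time the continuation of the walk is a deterministic function of the labels and of the resampling/orientation coins read off at sites not yet visited, and the law of the environment is a product measure. Assume toward a contradiction that this probability is $0$. It is convenient to pass to the stack representation: at each vertex $x$ fix in advance the infinite sequence of moves produced by the initial label at $x$ together with the i.i.d.\ resampling and orientation coins at $x$; then the $\Hor$--$\Ver$ walk becomes a rotor/stack walk, which is abelian, so the $n_0$-walker process may be generated by releasing the walkers from the origin one at a time, each completing its trajectory before the next is released.

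The heart of the matter is that, under the $\IID$ uniform hypothesis, adding further walkers cannot cure non-recurrence. If the first walker has finite range $R_1$, then only finitely many moves have been consumed, and only at sites of the finite set $R_1$; here the uniformity of the labels is essential, because the label process at each vertex is a symmetric two-state Markov chain started from its stationary law, so every prefix-truncation of a stack is again distributed as a fresh uniform stack. Combining this observation with the $\Hor\leftrightarrow\Ver$ and reflection symmetries of the uniform measure and with an explicit bookkeeping of the odometer on $R_1$, one argues that the environment seen by the second walker is, conditionally on $R_1$, again an $\IID$ uniform $\Hor$--$\Ver$ environment, up to an absolutely continuous change on the finite set $R_1$; by the zero--one law the second walker is then a.s.\ non-recurrent as well, and inductively so are all $n_0$ walkers, so the $n_0$-walker walk fails to visit some vertex infinitely often, contradicting Theorem~\ref{thmrecurrence many walker}. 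The remaining case, in which the first walker has infinite range but still misses some vertex infinitely often (escape to infinity), is the genuinely delicate one: there the modified region is no longer finite, and one must instead exploit that the walk still consumes only finitely many instructions at each individual site together with the fact that an escaping trajectory leaves untouched an entire half-plane's worth of fresh uniform environment, and couple with the $n_0$-walker process there.

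The step I expect to be the main obstacle is precisely this reconciliation with escape: the set of sites touched by a transient trajectory and the numbers of instructions consumed there are not stopping times for the underlying label chains, so one cannot simply invoke the Markov property, and the persistence of non-recurrence under the induced modification of the environment must be extracted from the symmetries of the uniform measure together with careful odometer estimates. Secondary technical points are the zero--one law itself and the legitimacy of the one-at-a-time abelian execution when the trajectories are infinite. An alternative route, avoiding the multi-walker coupling, would be to isolate from the proof of Theorem~\ref{thmrecurrence many walker} the key averaging estimate on the fraction of horizontal versus vertical exits from each vertex and to verify it directly for a single walker under $\IID$ uniform labels, where the independence of the labels from the walker's future visit pattern removes the $O(1)$ parity bias that otherwise forces the walker count upward; the crux there is the same, namely quantifying that bias.
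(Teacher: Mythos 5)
There is a genuine gap, and it sits exactly where you place it: the claim that, under the $\IUD$ measure, non-recurrence of the single walker would force non-recurrence of the $n_0$-walker walk. For a single walker the ``finite range'' case is vacuous (the walker never halts, so a transient trajectory has infinite range), so the only case is the one you call delicate, and for it you offer no argument: a transient trajectory alters the stack at infinitely many sites, in a way correlated with the trajectory itself, and the resulting environment is not absolutely continuous with respect to $\IUD$, so the zero--one law cannot be re-applied to the second walker. The one-at-a-time execution is also not available here: if the first walker never halts, the sequential order is not regular (the second walker never moves), so it does not simulate the $n_0$-walker walk; and the paper's monotonicity statement (Lemma~\ref{lemmonotonicity}) only bounds visit counts under an \emph{arbitrary} order from above by those under a \emph{regular} order, which is the wrong direction for deducing non-recurrence of the cyclic-order walk from non-recurrence of a sequential execution. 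A decisive warning sign is that your argument never uses $q>\frac13$: Theorem~\ref{thmrecurrence many walker} holds for every $q>0$ with a finite $n_0$, so if your reduction were valid it would prove a.s.\ recurrence under $\IUD$ for \emph{all} $q\in(0,1]$, a statement well beyond what the paper establishes and essentially an open problem; the missing implication ``more walkers cannot cure transience'' is precisely what fails to be known (and is implausible, since the content of Theorem~\ref{thmrecurrence many walker} is that extra walkers genuinely help when $q>0$).

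The alternative you sketch in your last sentences is, in substance, the paper's actual route for $q<1$: one keeps the single-walker martingale bound of Lemma~\ref{lemreturn probability lower bound} and replaces the crude bound $|\wt(x,0)|\le|\ft(x)|$ by the exact computation $\sum_{x\in B_{r+1}}\Eb_{\rho\sim\IUD}[\wt(x,0)]=\frac12$ (Lemma~\ref{lemweight initial configuration}), which improves the deficit from $\frac{|4q-2|}{q}$ to $\frac{|2q-1|}{q}$ and yields $\Eb_{\rho\sim\IUD}[p_{\rec}(\rho)]\ge 1-\frac{|2q-1|}{q}>0$ for $q\in(\frac13,1)$, after which Proposition~\ref{proposition: zero-one law IUD} upgrades this to probability one (Proposition~\ref{proprecurrence IID subcritical}). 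But you leave this unexecuted, and even carried out it does not cover $q=1$, where the bound degenerates to the trivial $p_{\rec}\ge 0$: the paper needs a further anti-concentration estimate on the terminal weights, $|\Eb_{\rho\sim\IUD}\Eb[\wt(x,\tau)]|\le(1-\frac{1}{58\cdot 2^{58}})|\ft(x)|$ (Lemma~\ref{lemRadon Nikodym}), proved by a path-surgery comparison of the probabilities of an even versus an odd number of visits to $x$ (Lemma~\ref{lemRadon Nikodym 2}). Your proposal contains no substitute for this step, so the non-elliptic case $q=1$ is not addressed at all.
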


\smallskip

The case $q=1$ in Theorem~\ref{thmrecurrence IID} deserves a special mention, as  this $\Hor$--$\Ver$ walk is  \emph{non}-elliptic, a property that also applies to rotor walks.
 (A walk is \emph{elliptic} if, every neighbor of the current location of the walker, is visited at the next step with positive probability.)
Most recurrence and transience results in this area (e.g., \cite{KOS,KP,PT}) assume some versions of ellipticity,
and thus Theorem~\ref{thmrecurrence IID} holds the  distinction of being one of the few results in this area for non-elliptic walks. 

\smallskip

In the proof of Theorem~\ref{thmrecurrence many walker} and Theorem~\ref{thmrecurrence IID}, we use   martingales that track both the number of departures from the origin and the labels at each vertex at any given time~(see Definition~\ref{defmartingale}), combined with zero-one laws that we develop for the recurrence of $\Hor$--$\Ver$ walks (see Section~\ref{subseczero-one laws}).
By applying the optional stopping theorem to the martingales,
we derive  lower bounds for  the return probabilities $p_k$  (i.e., the probability  that the  number of returns to  the origin is at least $k$).
These lower bounds turn out to be uniformly bounded away from $0$ under the hypotheses of Theorem~\ref{thmrecurrence many walker},
 and we then use a zero-one law~(see Proposition~\ref{proposition: zero-one law recurrence}) to conclude that the walk is recurrent.

When the labels are sampled from the $\IID$ uniform measure on $\{\Hor, \Ver\}$, we derive  improved lower bounds~(see~\eqref{eqlower bound IID}), which are again uniformly  bounded away from $0$
 when $q$ is contained in the interval $(\frac{1}{3},1)$ (note that this regime does not include $q=1$).
 However, 
 for the case $q=1$,
 these improved lower bounds yield only the trivial lower bound,
 and thus we need to improve them even further.
 We achieve this further improvement by proving an anti-concentration inequality, 
 for the probability of the label of an arbitrary vertex being equal to a given label (see Lemma~\ref{lemRadon Nikodym 2}).
 Finally,  we use another 
  zero-one law~(see Proposition~\ref{proposition: zero-one law IUD}) to conclude that the walk is recurrent.

\medskip

\subsection{Comparison with previous works}

The  martingale approach in this paper  dates back to the work of Holroyd and Propp~\cite{HP}, 
and since then has been successfully applied to prove various results for rotor walks (see e.g., \cite{HS,FGLP,Cha2}) and $p$-rotor walks (see e.g.,\cite{HLSH}).
The zero-one laws for the recurrence  of $\Hor$--$\Ver$ walks are inspired by zero-one laws for the directional transience of excited cookie random walks (see  \cite{KZ} and references therein), but with proofs based on the work of  \cite{AH2} originally developed for rotor walks.
The anti-concentration inequality, used in the proof of Theorem~\ref{thmrecurrence IID} for $q=1$, is original to this paper, to the best of the author's knowledge.

\medskip

\subsection{Related works}
$\Hor$--$\Ver$ walks are randomized versions of rotor walks (discovered independently by \cite{PDDK,WLB,Pro}),  where the last exit from each vertex follows an assigned deterministic order.
Ander and Holroyd~\cite{AH2} showed that,
one can always find a labeling for vertices of  $\Zb^d$ ($d\geq 1$) such that the  (single walker) rotor walk is recurrent.
On the other end of the spectrum, Florescu et al.~\cite{FGLP} and Chan~\cite{Cha1} constructed different labelings for vertices of $\Zb^d$ ($d\geq 3$) for which the rotor walk is transient regardless of the number of walkers.
The recurrence and transience of rotor walks have also been investigated for other graphs, including  regular trees~\cite{LL,AH1,MO}, Galton-Watson trees~\cite{HMSH}, directed cover of graphs~\cite{HS2}, and oriented lattices~\cite{FLP}.
Notably, the recurrence of  rotor walks on $\Zb^2$ with labels sampled from the  $\IID$ uniform measure on $\{\text{up, down, left, right}\}$ remains an open problem despite numerous investigations of this subject.
We refer the reader to \cite{HLM} for an excellent exposition on rotor walks and related subjects.

\smallskip

$\Hor$--$\Ver$ walks were introduced in \cite{CGLL} as  the representative example of \emph{random walks with local memory}, or RWLM for short,
where each vertex stores one bit of information by remembering the last exit from the vertex,
and the next exit is determined by the transition rule of a local Markov chain assigned to the vertex.
We discuss RWLMs in more details in Section~\ref{secpreliminaries},
where we derive zero-one laws for their recurrence.

One dimensional RWLMs  are more commonly studied 
in the literature under the name \emph{excited random walks}, or \emph{cookie random walks}:
 A pile of cookies is initially placed at each vertex of $\Zb$.
 Upon visiting  a vertex,
the walker consumes the topmost cookie from the pile  and moves one step to the right
or to the left with probabilities prescribed by that cookie.
If there are no cookies left at the current vertex,
the walker moves one step to the right or to the left with equal probabilities.
A cookie is \emph{positive} (resp. \emph{negative}) if its consumption 
 makes the walker moves 
right with probability larger (resp. smaller) than $\frac{1}{2}$.
The recurrence and transience of cookie random walks on $\Zb$
have been studied for the case of nonnegative cookies~\cite{Zer}, bounded number of   cookies per site~\cite{KZ08}, periodic cookies~\cite{KOS}, stationary-ergodic cookie distribution~\cite{ABO}, iterated leftover environments~\cite{AO},  site-based feedback environment~\cite{PT}, and Markovian cookie stacks~\cite{KP} (the last two papers are the most relevant to this paper).
Cookie random walks on $\Zb^d$ with $d>1$ are not studied as well as those on $\Zb$. Works which consider $d>1$ include 
 the case of a single cookie per vertex~\cite{BW},   the case of walks with a positive drift on one direction~\cite{Zer06} (note that $\Hor$--$\Ver$ walks have no drift), and a generalized case called \emph{generalized excited random walks}~\cite{MPRV}. 
We refer the reader to \cite{KZ} for an excellent exposition on this subject.

\emph{Reinforced
random walks}  (introduced by \cite{CD}, see also \cite{Pem88}) 
have the walkers choose their next location with probability dependent to
the number of visits to that location so far.
Thus the next exit from a vertex depends on all the past visits, instead of only the most recent visit.
  We refer the reader to \cite{Tot01,Pem07} for slightly dated but very useful surveys on this subject.
   Important works on reinforced random walks published after those surveys include \cite{ACK, ST,DST, SZ}.
  




 \medskip
 
 \subsection{Outline} In Section~\ref{secpreliminaries}, we introduce stack walks and random walks with local memory, and we present  zero-one laws for the recurrence of these walks.
 In Section~\ref{sechv walks}, we restate the main results in the notation of Section~\ref{secpreliminaries}.
 In Section~\ref{secmartingale}, we introduce the martingales and prove lower bounds for the return probabilities. 
 In Section~\ref{secproof of recurrnce many walkers}, we prove Theorem~\ref{thmrecurrence many walker}.
 In Section~\ref{secproof of recurrence IID subcritical}, we prove Theorem~\ref{thmrecurrence IID} for  $q\in (\frac{1}{3},1)$.
 In Section~\ref{secproof of recurrence IID critical}, we prove Theorem~\ref{thmrecurrence IID} for  $q=1$.
 In Section~\ref{secconcluding remarks}, we give concluding remarks and discuss open questions.

\bigskip

\section{Preliminaries}\label{secpreliminaries}

In this paper $G:=(V,E)$ is a (possibly infinite) graph that is simple, connected,
and locally finite (every vertex has finite degree).
A \emph{neighbor} of a vertex $x \in V$ is another vertex $y \in V$
such that $\{x,y\}$ is an edge of $G$.
We denote by $\Ngh(x)$ the set of neighbors of $x$. 
We denote by $\Nb$ the set of all nonnegative integers $\{0,1,2,\ldots\}$.

\medskip

\subsection{Stack walks}
	A \emph{stack} of $G$ is a function $\xi:V \times \Nb\to V$ such that, 
	for all $x \in V$ and $m \geq 0$, 
	we have  $\xi(x,m)$ is a neighbor of $x$.

\smallskip

\begin{definition}[Stack walks]
	Let $x$ be a vertex of $G$,  and let $\xi$ be a stack of $G$.
	A \emph{stack walk} with initialization $(x,\xi)$ is the sequence $(X_t,\xi_t)_{t \geq 0}$  defined recursively by
	\begin{equation}\label{equation: stack walk}
	\begin{split}
	(X_0,\xi_0)& \ := \ (x,\xi);\\
	\xi_{t+1}(x,\cdot)& \ := \ \begin{cases}
	\xi_{t}(x,\cdot+1) & \text{if $x= X_t$};\\
	\xi_{t}(x,\cdot) & \text{if $x\neq X_t$};
	\end{cases}\\
	X_{t+1}& \ := \ \xi_{t+1}(X_t,0).
	\end{split}
	\end{equation}
\end{definition}
Note that the stack walk is determined by the initialization $(x,\xi)$.

\smallskip

	The following image is useful: 
	$X_t$ records  the location of the walker at the end of the $t$-th step of the walk.
	For each $x \in V$,  we think of  $\xi_t(x,\cdot)$ as a stack lying under the vertex $x$ with $\xi_t(x,0)$ being on top, then $\xi_t(x,1)$, and so forth.
	Then, at the $t+1$-th step, the walker \emph{pops off} the stack of its current location $X_t$, meaning that it removes the top item of the stack
	lying under $X_t$.
	Then, the walker moves to the vertex specified by the top item of the new stack.
	This description of the walk  originated from
	the work of Diaconis and Fulton~\cite{DF},
	and the term \emph{stack walk} was coined by Holroyd and Propp~\cite{HP}.
	The stack walk was featured prominently in Wilson's
	algorithm for the uniform sampling of  spanning trees of a finite graph~\cite{Wil}.
	
\medskip

We will also consider the  multi-walker version of stack walks in this paper.

\smallskip

\begin{definition}[Multi-walker stack walks]
Let $n \geq 1$ be the number of walkers,
let $\xb = (x^{(1)}, \ldots, x^{(n)}) \in V^n$ be the initial location of the $n$ walkers,
and let $\xi$ be the  initial stack.
A \emph{turn order} is a sequence \. $\Oc = o_1o_2\ldots$ \.  such that each $o_t$ is contained in \. $\{0,\ldots,n\}$\. 
The \emph{multi-walker stack walk} \. $(\Xb_t=(X_t^{(1)},\ldots, X_t^{(n)}),\xi_t)_{t \geq 0}$ \.  is defined  recursively as follows:
\begin{equation*}
\begin{split}
(\Xb_0,\xi_0)& \ := \ (\xb,\xi);\\
\xi_{t+1}(x,\cdot)& \ := \ \begin{cases}
\xi_{t}(x,\cdot+1) & \text{if \ $x= X_{t}^{(i)}$ \ and \  $i=o_{t+1}$};\\
\xi_{t}(x,\cdot) & \text{otherwise};
\end{cases}\\
X_{t+1}^{(i)}& \ := \ 
\begin{cases}
\xi_{t+1}(X_t^{(i)},0) & \text{if \ $i=o_{t+1}$};\\
X_t^{(i)} & \text{otherwise.} 
\end{cases} 
\end{split}
\end{equation*}
\end{definition}
Note that, once the turn order $\mathcal{O}$ is fixed, the multi-walker stack walk is determined by the initialization $(\xb,\xi)$. 

\smallskip

Described in words, 
at the $t$-th step of the walk,
the $o_t$-th walker   performs one step of the stack walk, with $\Xb_t$ and $\xi_t$ tracking the location of the $n$ walkers and the stack after the first $t$ steps of the walk.
Note that the $n$ walkers are sharing the same stack when 
performing the stack walk.
Also note that $o_t=0$ means that none of the walkers move (for example, this can occur when some stopping condition has been reached, see \S\ref{subsecfrozen walks}).

\medskip

A multi-walker stack walk is \emph{recurrent} if every vertex is visited infinitely many times,
and is \emph{transient} if every vertex is visited at most finitely many times.
A stack $\xi$ of $G$ is \emph{regular} if, for every $x \in V$, every neighbor $y$ of $x$ appears in the stack $\xi(x,\cdot)$ infinitely many times.
A turn order is \emph{regular} if every walker performs infinitely many steps,
i.e., 
each $i \in \{1,\ldots,n\}$ appears in $\Oc$ infinitely many times.

\smallskip

\begin{lemma}[{\cite[Lemma~6]{HP}}]\label{lemma: dichotomy recurrence transience}
	Let \. $(\Xb_t,\xi_t)_{t \geq 0}$ \. be a multi-walker stack walk 
	with a regular turn order and a regular initial stack.
	Then the  walk is either recurrent or transient. \qed
\end{lemma}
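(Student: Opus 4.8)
The plan is to establish the dichotomy by analysing the set
\[
S \ := \ \{\, v \in V : v \text{ is visited infinitely often by } (\Xb_t)_{t\ge 0} \,\},
\]
and showing it has no boundary in $G$: namely, that $v\in S$ forces every neighbor of $v$ to lie in $S$. Granting this, since $G$ is connected, $S$ must be either $\emptyset$ or all of $V$. In the first case every vertex is visited only finitely often, so the walk is transient; in the second case the walk is recurrent. That is precisely the asserted dichotomy.

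So the heart of the matter is the claim: \emph{if $v$ is visited infinitely often, then so is each neighbor of $v$}. I would prove this in two steps. First, the stack under $v$ is popped infinitely often: if it were popped only finitely many times, there would be a time $T$ after which no walker ever departs from $v$; but $v\in S$, so some walker $i$ occupies $v$ at a time $t>T$ and, being unable to leave, satisfies $X^{(i)}_s=v$ for every $s\ge t$. Regularity of the turn order gives $o_{s+1}=i$ for infinitely many $s\ge t$, and at any such step walker $i$ pops the stack under $v$, contradicting the choice of $T$. Second, writing $c_t$ for the number of pops of the stack under $v$ up to time $t$, unwinding~\eqref{equation: stack walk} (and its multi-walker analogue) shows that whenever a walker departs from $v$ at time $t$ it moves to $\xi_{t+1}(v,0)=\xi(v,c_t+1)$, so the successive departures from $v$ send walkers to $\xi(v,1),\xi(v,2),\xi(v,3),\dots$ in turn. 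Combining the two steps with regularity of the initial stack: there are infinitely many departures, and every neighbor $y$ of $v$ occurs infinitely often among $\xi(v,1),\xi(v,2),\dots$, hence $y$ is visited infinitely often. This proves the claim, and the lemma follows.

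I do not anticipate a serious obstacle, as the argument is essentially elementary once the right set $S$ is identified; the points that need care are the bookkeeping of the pop-count $c_t$ when several walkers share one stack and the turn order may contain idle entries $o_t=0$, and the harmless convention at the vertices of $\xb$ where walkers start, neither of which affects any ``infinitely often'' conclusion. If there is a conceptual subtlety, it is that \emph{both} hypotheses are genuinely used: regularity of the turn order forces a recurrently visited vertex to be departed from infinitely often, and regularity of the stack forces those departures to exhaust its neighborhood.
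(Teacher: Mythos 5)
Your proposal is correct and follows essentially the same route as the paper: the paper's proof also reduces the dichotomy to the claim that an infinitely-visited vertex forces all its neighbors to be infinitely visited (via regularity of the turn order and of the stack) and then propagates by connectedness. You simply supply the details of that neighbor-propagation step, which the paper states without elaboration.
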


\smallskip

\begin{proof}
	Suppose that the stack walk is not transient. 
	Then there exists $x \in V$ that is visited infinitely many times.
	It thus suffices to show that 
	\begin{equation}\label{eqrecurrence}
		\text{If a vertex $x \in V$ is visited infinitely many times, then the stack walk is recurrent.}
	\end{equation}
	Since the initial stack and the turn order are both regular, this implies that every neighbor of $x$ is also visited infinitely many times.
	Since $G$ is connected, 
	repeating  the same argument ad infinitum  implies that every vertex of $G$ is visited infinitely many times.
	This implies that the stack walk is recurrent if it is not transient, as desired.
\end{proof}

\smallskip

For every $x \in V$ and $t\geq 0$, we denote by \. $R_t(x) := R_t(x;\Xb,\xi,\Oc)$ \. the total number of transitions (also the number of visits) to $x$ by the stack walk  up to time $t$,
\begin{equation*}
 R_t(x) \ := \  \big|\{ s \in \{1,\ldots,t\} \. \mid \. X_{s-1}^{(i)}\neq x, \ X_{s}^{(i)}= x  \ \text{ for some } i \in \{1,\ldots,n\} \} \big|\.. 
\end{equation*}
We write \. $R_{\infty}(x) := \lim_{t \to \infty} R_{t}(x)$ \. the total number of transitions to $x$ throughout the entirety of the stack walk.

\smallskip

\begin{lemma}\label{lemmonotonicity}
	Let $\Oc$ and $\Oc'$ be  turn orders for two stack walks with the same initial location  and the same initial stack.
	Suppose that $\Oc'$ is a regular turn order.
	Then, for every  $x \in V$,
	\[  R_{\infty} (x) \quad \leq \quad R_{\infty}'(x),  \]
	 the total number of visits to $x$ by the stack walk with turn order $\Oc$ is less than or equal to  that of the stack walk with turn order $\Oc'$.
\end{lemma}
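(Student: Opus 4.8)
The plan is to couple the two stack walks on the same probability space (in fact, they are deterministic given the shared initialization, so this is just a bookkeeping argument) and show, by induction on $t$, that the ``history'' of the walk with turn order $\Oc$ up to any fixed time is a \emph{sub-history} of the walk with turn order $\Oc'$ up to a sufficiently large time. More precisely, for the single-walker case write $\Oc = o_1 o_2 \cdots$; the only moves that matter for the evolution of $(\Xb_t, \xi_t)$ are the ``active'' steps, i.e.\ those $s$ with $o_s \neq 0$. Deleting the inactive steps from $\Oc$ yields a (possibly finite) sequence of genuine stack-walk moves; call the resulting walk-states $\widetilde X_0, \widetilde X_1, \ldots$ and $\widetilde\xi_0, \widetilde\xi_1, \ldots$. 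Since $\Oc'$ is regular, its own active steps form an infinite sequence, and I first reduce to comparing the two ``de-laced'' walks: the inactive steps of $\Oc$ only slow things down and cannot create new visits, while the regularity of $\Oc'$ guarantees the de-laced $\Oc'$-walk runs forever.

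The core step is the following claim, proved by induction on $k$: for the multi-walker setting, after the $k$-th active step of the $\Oc$-walk, there is a time $\tau(k)$ along the $\Oc'$-walk such that the $\Oc'$-walk at time $\tau(k)$ has ``done at least as much'' as the $\Oc$-walk has after its first $k$ active steps. The subtlety is that with $n$ walkers the turn orders may interleave the walkers differently, so I cannot simply say the configurations agree. Instead I track the natural partial order: a configuration $(\Xb,\xi)$ is \emph{ahead of} $(\Xb',\xi')$ if $\Xb' = \Xb$ and the stacks $\xi'$ are obtained from $\xi$ by popping each column finitely many times — equivalently, the set of ``pops used so far'' for $\Xb'$ contains that for $\Xb$, columnwise. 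I would establish that (i) this partial order is preserved under appending one more active step to the less-advanced walk, matched by a suitable (possibly empty) block of active steps of the more-advanced walk, and (ii) when a walk pops one more item from column $x$, it is exactly ``about to visit'' a new neighbor of $x$, so being further ahead means at least as many total visits to every vertex. Summing over all $k$ and taking $t\to\infty$ gives $R_\infty(x) \le R_\infty'(x)$. This is essentially the abelian / "strong commutativity" property of stack walks (the same phenomenon underlying abelian sandpiles and the toppling order being irrelevant); I expect the only real work is setting up the partial order carefully enough that the induction goes through cleanly in the multi-walker case, where different walkers sitting at the same vertex and competing for the same stack must be handled.

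The main obstacle is precisely this multi-walker abelian property: I must verify that no matter how $\Oc$ and $\Oc'$ schedule the $n$ walkers, performing ``more'' active steps can only increase the pop-count of every stack column, hence the visit count of every vertex, and that the de-lacing argument survives the possibility that some walker in $\Oc$ performs only finitely many steps (allowed, since $\Oc$ need not be regular) while in $\Oc'$ it performs infinitely many (forced, since $\Oc'$ is regular) — but the latter only adds visits, so it is harmless for the inequality. If a clean abelian-property statement is available from the cited literature (Holroyd--Propp, or Diaconis--Fulton), I would instead invoke it directly and reduce this lemma to a one-line consequence; absent that, the inductive argument above is the fallback.
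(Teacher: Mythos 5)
The heart of your plan, claim (i) built on the partial order ``$\Xb'=\Xb$ and pop counts dominate columnwise,'' does not go through, for two reasons. First, equality of walker positions cannot be maintained between the two walks: as soon as the $\Oc'$-walk has performed strictly more pops at some column $x$, a walker departing $x$ in the two walks reads \emph{different} stack items and moves to different neighbors, so the position vectors genuinely diverge (already after the first step, if $\Oc$ moves walker $1$ while $\Oc'$ moves walker $2$ from the same vertex, walker $2$ consumes the card walker $1$ used in the other walk, and there need never be a later time at which the two position vectors agree walker-by-walker). The only invariant that can survive is domination of pop/departure counts with positions dropped entirely. Second, you propose to match each new active step of the $\Oc$-walk ``by a suitable (possibly empty) block of active steps of the more-advanced walk,'' but $\Oc'$ is a \emph{given} turn order --- you cannot schedule its moves. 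What actually has to be proved is that the $\Oc'$-walk \emph{eventually} performs every pop that the $\Oc$-walk performs, and this is exactly where regularity of $\Oc'$ must enter in an essential way; your outline uses regularity only to say the de-laced $\Oc'$-walk ``runs forever,'' which is not enough, since a walk can run forever and yet never again pop a given column.

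The missing step (and the content of the paper's proof, adapted from the abelian-networks argument of Bond--Levine) is the following. Track the departure counts $N_t(x,y)$ of the $\Oc$-walk and suppose, at a minimal time $t$, some departure from $x$ exceeds the limiting counts $N'_\infty(x,\cdot)$ of the $\Oc'$-walk. Minimality forces $N_{t-1}(x,y)=N'_\infty(x,y)$ for every neighbor $y$, and departures from all other vertices up to time $t-1$ are dominated as well; since arrivals at $x$ are a monotone function of the neighbors' departure counts, the conservation identity ``initial walkers at $x$ plus arrivals minus departures $\ge 1$'' (valid in the $\Oc$-walk because a walker is about to leave $x$ at time $t$) transfers to the $\Oc'$-walk and shows that from some finite time on a walker sits at $x$ forever in the $\Oc'$-walk. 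Regularity of $\Oc'$ then gives that walker infinitely many turns, forcing infinitely many further departures from $x$ and contradicting the finiteness of $N'_\infty(x,\cdot)$. This is the step your induction would have to contain, and it is also why your fallback of quoting the Diaconis--Fulton/Holroyd--Propp least action principle is not a one-line reduction: that statement concerns finite graphs with sinks and terminating executions, whereas here the graph is infinite and both executions are infinite, so the regularity argument just described is precisely the required adaptation.
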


\smallskip

We present the proof of Lemma~\ref{lemmonotonicity} in Appendix~\ref{secappendixA},
and the proof is adapted  from \cite[Lemma~4.3]{BL} for abelian networks (see also \cite[Corollary~4.3]{CL} for an alternate proof).

\smallskip

\begin{lemma}[Abelian property]\label{lemabelian property}
		Let $\Oc$ and $\Oc'$ be  regular turn orders for two stack walks with the same initial location  and the same initial stack.
		Then the  stack walk with turn order  $\Oc$ is recurrent if and only if the stack walk with turn order $\Oc'$ is recurrent.
\end{lemma}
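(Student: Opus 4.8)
The plan is to deduce the abelian property directly from the monotonicity statement of Lemma~\ref{lemmonotonicity}, using the recurrence/transience dichotomy of Lemma~\ref{lemma: dichotomy recurrence transience}. First I would observe that, since both $\Oc$ and $\Oc'$ are regular turn orders, Lemma~\ref{lemma: dichotomy recurrence transience} applies to each of the two stack walks (the initial stack is assumed regular throughout the relevant setup, or one restricts to that case as in the ambient discussion), so each walk is \emph{either} recurrent \emph{or} transient. Moreover, recurrence of a stack walk is equivalent to the assertion that $R_\infty(x) = \infty$ for every $x \in V$, while transience is equivalent to $R_\infty(x) < \infty$ for every $x \in V$; this is immediate from the definitions of $R_\infty$ and of recurrence/transience, together with the dichotomy.

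Next I would apply Lemma~\ref{lemmonotonicity} in both directions. Since $\Oc'$ is regular, the lemma gives $R_\infty(x) \le R_\infty'(x)$ for every $x$. Since $\Oc$ is also regular, swapping the roles of the two turn orders in Lemma~\ref{lemmonotonicity} gives the reverse inequality $R_\infty'(x) \le R_\infty(x)$ for every $x$. Hence $R_\infty(x) = R_\infty'(x)$ for every $x \in V$: the two stack walks make exactly the same (possibly infinite) number of visits to each vertex. In particular, $R_\infty(x) = \infty$ for all $x$ if and only if $R_\infty'(x) = \infty$ for all $x$, which by the characterization above says precisely that the walk with turn order $\Oc$ is recurrent if and only if the walk with turn order $\Oc'$ is recurrent.

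The only point that requires a little care — and the place where the hypotheses are genuinely used — is that Lemma~\ref{lemmonotonicity} has an asymmetry built in (it requires only the \emph{second} turn order to be regular), so to obtain equality one must invoke it twice, once in each order, which is legitimate here exactly because \emph{both} $\Oc$ and $\Oc'$ are assumed regular. There is no real analytic obstacle; the content has already been packaged into Lemma~\ref{lemmonotonicity} (whose proof is deferred to Appendix~\ref{secappendixA}), and the present argument is the short combinatorial wrap-up. I would write it in three or four sentences.
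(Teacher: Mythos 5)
Your proposal is correct and is exactly the paper's argument: the paper's proof is the one-line observation that the lemma follows directly from Lemma~\ref{lemmonotonicity}, and your write-up simply makes explicit the intended double application (once in each direction, using regularity of both turn orders) to get $R_\infty(x)=R_\infty'(x)$ for all $x$, hence recurrence of one walk if and only if recurrence of the other.
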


\smallskip

\begin{proof}
	This  follows  directly from Lemma~\ref{lemmonotonicity}.
\end{proof}

\smallskip

Thus we omit the dependence on the turn order from the notation, and 
we  assume  throughout this paper   the \emph{cyclic turn order} (i.e., $o_t$ is the unique integer in $\{1,\ldots,n\}$ that is equal to $t$ modulo $n$)  \. is used, unless stated otherwise.
Note that the cyclic turn order is regular.


\smallskip

For every $\xb \in V^n$ and every stack $\xi$, 
we say $(\xb,\xi)$ is \emph{recurrent}
if the stack walk with initialization $(\xb,\xi)$ is recurrent.
Similarly, we say 
$(\xb,\xi)$ is \emph{transient}
if the stack walk with initialization $(\xb,\xi)$ is transient.
We also write
\[   \satu_{\rec}(\xb,\xi)  \ := \ 
\begin{cases}
1 & \text{ if } (\xb,\xi) \text{ is recurrent};\\
0 & \text{ if } (\xb,\xi) \text{ is transient}.
\end{cases}\]



For every $x \in V$, the \emph{popping operation} $\varphi_x$ is a map  that sends a stack $\xi$ to the stack $\varphi_x(\xi)$  by popping off the top item of the stack of $x$,
\[  \varphi_x(\xi)(y,m) \ := \  \begin{cases}
\xi(y,m+1) & \text{ if } y = x;\\
\xi(y,m) & \text{ if } y \neq x.
\end{cases}  \]
We say that a stack \emph{$\xi'$ is obtained from $\xi$ by finitely many popping operations} 
if there exists $x_1,\ldots, x_m \in V$ 
such that \. $\varphi_{x_m} \circ \cdots \circ \varphi_{x_1}(\xi) \. = \. \xi'$\..
In particular, for every  stack walk \. $(\Xb_t,\xi_t)_{t \geq 0}$ \.  and every $t \geq 0$, the stack   
$\xi_{t}$ is obtained from $\xi_{0}$ by finitely many popping operations.

\smallskip

\begin{theorem}[c.f. {\cite[Theorem~1, Corollary~6]{AH2}}]\label{theorem: recurrence is invariant under stack relation}
	Let $\xb,\xb' \in V^n$ and let $\xi,\xi'$ be regular stacks such that $\xi'$ is obtained from $\xi$ by finitely many popping operations.
	Then $(\xb,\xi)$ is recurrent if and only if $(\xb',\xi')$
	is recurrent. \qed
\end{theorem}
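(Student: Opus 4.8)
The plan is to reduce the statement to two independent invariances: first, that recurrence does not depend on the starting location $\xb$ (given a fixed regular stack), and second, that recurrence is unchanged under a single popping operation $\varphi_x$ (and hence, by iteration, under finitely many). The first reduction is essentially a connectivity argument: if $(\xb,\xi)$ is recurrent then every vertex is visited infinitely often, so in particular the walk started from $\xb$ eventually reaches $\xb'$; by the abelian property (Lemma~\ref{lemabelian property}) we may run the walkers in whatever order we like, so we can first push all walkers to $\xb'$ (this uses only finitely many steps and only finitely many pops, transforming $\xi$ into some stack $\tilde\xi$ obtained from $\xi$ by finitely many pops), and then observe that the continuation is a stack walk with initialization $(\xb',\tilde\xi)$. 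So it suffices to prove the claim in the form: $(\xb,\xi)$ is recurrent iff $(\xb,\varphi_x(\xi))$ is recurrent, for every single vertex $x$, together with the location-independence; then compose.

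For the core step, fix $x\in V$ and compare the stack walk from $(\xb,\xi)$ with the stack walk from $(\xb,\varphi_x(\xi))$. The idea, following \cite{AH2}, is a coupling/delay argument: the second walk is "the first walk with one pop at $x$ already performed." I would argue that if $x$ is visited at least once by the first walk, then from that time onward the two walks can be synchronized — more precisely, there is a time-change identifying the two trajectories after the first pop at $x$ — so their recurrence status agrees. The remaining case is that $x$ is \emph{never} visited by the first walk; then $\xi$ and $\varphi_x(\xi)$ never differ along the actual trajectory of either walk (the second walk also never pops $x$ a second... — here one must be a little careful, since $\varphi_x(\xi)$ has "already used up" one entry at $x$, but since $x$ is not visited at all the extra pop is invisible), so the two walks are literally equal as sequences and the equivalence is trivial. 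Since recurrence is a tail property that is insensitive to a finite initial segment, combining the two cases gives the single-pop invariance. Then $\xi' = \varphi_{x_m}\circ\cdots\circ\varphi_{x_1}(\xi)$ follows by an immediate induction on $m$, and finally we patch in the change of starting point via the argument of the previous paragraph.

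The main obstacle I expect is making the synchronization in the "$x$ is visited" case fully rigorous, because popping at $x$ changes not only the immediate next step out of $x$ but shifts \emph{all future} exits from $x$ by one, so the two walks do not stay literally equal — they differ in a structured, recoverable way. The clean way to handle this is to invoke the already-stated monotonicity machinery: by Lemma~\ref{lemmonotonicity}, for stacks related by finitely many pops the visit counts $R_\infty(y)$ are comparable in both directions once one sets up the right intermediate turn orders, which forces the recurrence statuses to coincide; indeed Theorem~\ref{theorem: recurrence is invariant under stack relation} is stated precisely as a packaging of Lemma~\ref{lemmonotonicity} and Lemma~\ref{lemabelian property} (it is labeled "c.f. \cite{AH2}"). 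So in practice the proof is short: reduce to one pop and one walker-relocation, express each as a finite sequence of legal moves under some regular turn order, and apply Lemma~\ref{lemmonotonicity} twice (once in each direction) to sandwich $R_\infty$, concluding $\satu_{\rec}(\xb,\xi)=\satu_{\rec}(\xb',\xi')$. The only genuine care needed is checking that the intermediate stacks remain regular — which holds because popping finitely many times from a regular stack leaves a regular stack.
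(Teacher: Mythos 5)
There is a genuine gap, and it sits exactly where the real work of this theorem lies. Your reduction of location-independence to ``by the abelian property we may run the walkers in whatever order we like, so we can first push all walkers to $\xb'$'' is not justified. Turn-order freedom only lets you choose \emph{which} walker moves at a given step, never \emph{where} it moves: each move is dictated by the shared stack. Recurrence of the collective walk says every vertex is visited infinitely often by the walkers jointly, but it does not say that walker $i$ can be scheduled to reach the prescribed vertex $\xb'(i)$; an individual walker may well be transient while the collective walk is recurrent, and no scheduling can ``wait'' for it at a vertex it never reaches. Neither Lemma~\ref{lemabelian property} nor Lemma~\ref{lemmonotonicity} provides this routing — both compare walks with the \emph{same} initial location and the \emph{same} initial stack under different turn orders. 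This is precisely why the paper does not take your route: it proves start-point invariance as the hard step, first for a single walker moved to a neighboring start (Lemma~\ref{lemrecurrence invariant under changed position single walker}, via the Diaconis--Fulton least action principle, Lemma~\ref{lemleast action principle}, applied to a finite sink-graph carrying $m+1$ walkers), and then for $n$ walkers (Lemma~\ref{lemrecurrence invariant under changed position}) by removing one walker and splitting into the cases where the $(n-1)$-walker walk is recurrent or transient — the transient case passing to the limiting stack and invoking the single-walker lemma. Your proposal has no substitute for this machinery.

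Your fallback for the single-pop step fails for the same reason: Lemma~\ref{lemmonotonicity} cannot ``sandwich'' the visit counts of $(\xb,\xi)$ and $(\xb,\varphi_x(\xi))$, because when no walker stands at $x$ there is no intermediate turn order whose walk realizes the pop at $x$; the two initializations are simply not related by a change of turn order, and the theorem is not a packaging of Lemmas~\ref{lemmonotonicity} and~\ref{lemabelian property} alone. The synchronization idea also breaks where you suspected: with the paper's convention the first departure from $x$ under $\xi$ goes to $\xi(x,1)$ while under $\varphi_x(\xi)$ it goes to $\xi(x,2)$, so after the first visit to $x$ the two trajectories occupy different vertices and the discrepancy propagates — there is no time-change identification. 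The paper's order of operations is the reverse of yours and dissolves this difficulty: once start-point invariance is available, one places all walkers at $x$, lets a single walker take one step (so that the resulting stack is exactly $\varphi_x(\xi)=\xi'$ and recurrence is preserved by transitivity), and then moves the walkers to $\xb'$ by Lemma~\ref{lemrecurrence invariant under changed position}. What is correct in your write-up — that the ``never visited'' case is trivial, and that regularity survives finitely many pops — does not carry the argument.
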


\smallskip

We present the proof of Theorem~\ref{theorem: recurrence is invariant under stack relation} in Appendix~\ref{secappendixB} for completeness,
and the proof  is adapted  from \cite{AH2}.


\smallskip

\medskip

\subsection{Random walks with local memory}\label{subsecRWLM}

A random walk with local memory is the following randomized version of stack walks.
A \emph{rotor configuration} is a function $\rho:V \to V$ such that $\rho(x)$ is a neighbor of $x$  for all $x \in V$.
We assign a  \emph{local mechanism} $\MC_x$ for each $x \in V$, which  is a  Markov chain with states  $\Ngh(x)$ and with transition function \ts $p_x(\cdot,\cdot)$\ts. 
We assume that $\MC_x$ is an irreducible Markov chain.
Note that each $\MC_x$ is a Markov chain with a finite state space since the graph is locally finite.

\smallskip

\begin{definition}[Random walk with local memory]\label{definition: random walk with local memory}
	Let $n\geq 1$, let $\xb \in V^n$, and let $\rho$ be a rotor configuration.
	A  \emph{(multi-walker) random walk with local memory} with initialization $(\xb,\rho)$, or RWLM for short, is a random sequence of pairs  
	$(\Xb_t,\rho_t)_{t \geq 0}$   that satisfies the following transition rule:
	\begin{equation}\label{equation: transition rule RWLM}
	\begin{split}
	(\Xb_0,\rho_0) \ := \ & (\xb,\rho);\\
	\rho_{t+1}(x) \ := \ &\begin{cases}  Y_{t} & \text{if \ $x= X_{t}^{(i)}$ \ and \  $i\equiv t+1$ mod $n$};\\
	\rho_{t}(x) & \text{otherwise,} \end{cases}; \text{ and }\\
	X_{t+1}^{(i)} \ := \ &
	\begin{cases}
		Y_{t}  & \text{if  \  $i\equiv t+1$ mod $n$};\\
		X_t^{(i)} & \text{otherwise,}
	\end{cases}
	\end{split}
	\end{equation}
	where $Y_{t}$ is a random neighbor of $X_t^{(i)}$ sampled from \. $p_{X_t^{(i)}}(\rho_t(X_t^{(i)}),\cdot)$ \.  with $i$ satisfying  $i\equiv t+1$ mod $n$,
	independent of the past.
\end{definition}
We refer to \cite{CGLL} for history and references for random walks with local memory.

\smallskip

The following image is useful:
The random walk with local memory \. $(\Xb_t,\rho_t)_{n\geq 0}$ \. corresponds to the (random)  stack walk \. $(\Xb_t,\xi_t)_{t\geq 0}$\.,
where, for each $x\in V$,  the initial stack \. $\xi_0(x,\cdot)$ \.   is a  Markov process for the  Markov chain $\MC_x$ with initial state $\rho(x)$. 
We denote by  $\Stack(\rho)$ the corresponding probability distribution on  stacks of $G$, which is the source of the randomness in the RWLM. 
Note that, for each $t \geq 0$,
the vector $\Xb_t$ records the location of the walkers for both walks (the RWLM \emph{and}  the (random) stack walk) and  the rotor  $\rho_t(x)$ at $x$ corresponds to the  top item $\xi_t(x,0)$ of   the stack $\xi_t$.


\medskip

Let $\rho$ be an arbitrary rotor configuration of $G$.
The \emph{recurrence probability} \. $p_{\rec}(\rho)$ \. is the probability that the RWLM with initial rotor configuration $\rho$ is recurrent, 
\[ p_{\rec}(\rho) \ := \  \Eb_{\xi\sim \Stack(\rho)} \. \big[ \satu_{\rec}(\xb,\xi) \big]\.,  \]
where $\xb$ is an arbitrary element of $V^n$.
Note that \. $p_{\rec}(\rho)$ \. does not depend on the choice of $\xb \in V^n$ by Theorem~\ref{theorem: recurrence is invariant under stack relation}.
However, \. $p_{\rec}(\rho)$ \. could depend on the number of walkers $n$ (the dependence on $n$ is  not reflected in the notation for simplicity).

We say that two rotor configuration $\rho,\rho'$ \emph{differ by finitely many vertices} if there exists $x_1,\ldots, x_m$ such that \. $\rho(x)  = \rho'(x)$ \. for all $x \in V \setminus \{x_1,\ldots, x_m\}$\..

\smallskip

\begin{lemma}\label{lemma: recurrence rho invariant under finite changes}
	Let $\rho$ and $\rho'$ be rotor configurations that differ by finitely many vertices.
	Then 
	\[ p_{\rec}(\rho) \ = \ p_{\rec}(\rho')\.. \]
\end{lemma}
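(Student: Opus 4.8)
The plan is to reduce the general case to a single-vertex change and then compare the two stack-walk dynamics directly. First I would observe that, by induction on the number of vertices where $\rho$ and $\rho'$ disagree, it suffices to treat the case where $\rho$ and $\rho'$ differ at exactly one vertex, say $z \in V$, with $\rho(z) = a$ and $\rho'(z) = b$ for two (possibly equal, in which case there is nothing to prove) neighbors $a, b$ of $z$. So assume from now on that $\rho, \rho'$ agree off $z$.

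Next I would pass from rotor configurations to stacks via the coupling implicit in $\Stack(\rho)$. The key point is that $\Stack(\rho)$ is the law of a stack $\xi$ in which, for each $x$, the column $\xi(x,\cdot)$ is a trajectory of the Markov chain $\MC_x$ started from $\rho(x)$, independently across $x$. Since $\MC_x$ is irreducible with finite state space, the chain started from $a$ and the chain started from $b$ can be coupled so that they coincide from some a.s.-finite time $\tau$ onward: run the $b$-chain, and run the $a$-chain, and let $\tau$ be the first time the $a$-chain visits $b$ (finite a.s. by irreducibility and positive recurrence of finite irreducible chains), after which the two follow the same increments. Concretely, this exhibits a coupling of $\xi \sim \Stack(\rho)$ and $\xi' \sim \Stack(\rho')$ such that $\xi$ and $\xi'$ agree except on the column at $z$, and the column $\xi'(z,\cdot)$ is obtained from $\xi(z,\cdot)$ by deleting the first $\tau$ entries; equivalently, $\xi' = \varphi_z^{\tau}(\xi)$ is obtained from $\xi$ by finitely many popping operations. (One must also check that $\xi$ is regular $\Stack(\rho)$-a.s., which again follows from irreducibility of each $\MC_x$ — every neighbor of $x$ recurs infinitely often in $\xi(x,\cdot)$ a.s. — and similarly for $\xi'$.)

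Now Theorem~\ref{theorem: recurrence is invariant under stack relation} applies: on the event (of full probability) that $\xi$ and $\xi'$ are both regular, $(\xb,\xi)$ is recurrent if and only if $(\xb,\xi')$ is recurrent, i.e. $\satu_{\rec}(\xb,\xi) = \satu_{\rec}(\xb,\xi')$ a.s. under this coupling. Taking expectations gives
\[
 p_{\rec}(\rho) \ = \ \Eb\big[\satu_{\rec}(\xb,\xi)\big] \ = \ \Eb\big[\satu_{\rec}(\xb,\xi')\big] \ = \ p_{\rec}(\rho'),
\]
where in the outer equalities we used that the marginal of $\xi$ (resp. $\xi'$) under the coupling is $\Stack(\rho)$ (resp. $\Stack(\rho')$), together with the $\xb$-independence of $p_{\rec}$ noted after Theorem~\ref{theorem: recurrence is invariant under stack relation}. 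Iterating over the finitely many vertices of disagreement finishes the proof.

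The main obstacle I anticipate is the coupling step: one must produce, jointly on the same probability space, stacks $\xi \sim \Stack(\rho)$ and $\xi' \sim \Stack(\rho')$ with $\xi'$ a finite popping of $\xi$. The clean way is to build them both from a single Markov-chain trajectory at $z$ (started from $a$) by letting $\xi'(z,\cdot)$ be that trajectory shifted past its first hitting time of $b$ — but one should double-check that the shifted trajectory indeed has the law of $\MC_z$ started at $b$, which is exactly the strong Markov property at the hitting time $\tau$ of $b$. The regularity-a.s. bookkeeping and the reduction to one vertex are routine by comparison.
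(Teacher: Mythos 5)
Your proposal is correct and follows essentially the same route as the paper: couple $\Stack(\rho)$ and $\Stack(\rho')$ by popping each disagreeing column up to the first hitting time of the new rotor value (the strong Markov property giving the right law), note these hitting times are a.s.\ finite and nonzero only at finitely many vertices, and then invoke Theorem~\ref{theorem: recurrence is invariant under stack relation} before taking expectations. The only cosmetic difference is that you reduce to a single-vertex change by induction, whereas the paper handles all finitely many disagreeing vertices simultaneously with the hitting times $T_y$.
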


\smallskip

\begin{proof}
	Let $\xi$ be the stack sampled from $\Stack(\rho)$.
	Note that $\xi$ is a regular stack a.s. since every local mechanism $\MC_x$ is an irreducible Markov chain with a finite state space.
	For each $y \in V$, let $T_y$ be the (random) smallest nonnegative integer such that \. $\xi(y,T_y) \ = \ \rho'(y)$\..
	We write \. $\varphi := \prod_{y \in V} \varphi_y^{T(y)}$\..
	Then, by the Markov property, 
	\begin{equation}\label{equation: alfa 1}
	 \varphi\big(\xi\big)   \ \text{ is equal in distribution to the stack sampled from }\. \Stack(\rho')\..  
	\end{equation}
	
	Note that $T_y=0$ for all but finitely many $y$'s since $\rho$ and $\rho'$ differ by finitely many vertices.
	Also note that each $T_y$ is finite a.s. since the Markov chain $\MC_y$ is irreducible and finite.
	These two observations imply that  
	$\varphi$ is a finite product of popping operations a.s..
	It then follows from Theorem~\ref{theorem: recurrence is invariant under stack relation} that, for all $\xb \in V^n$,
	\begin{align}\label{equation: alfa 2}
		 \satu_{\rec} \big(\xb,\varphi \big( \xi \big)\big)   \ = \ \satu_{\rec} \big(\xb,\xi\big) \quad \text{a.s.} 
	\end{align}
	
	Combining \eqref{equation: alfa 1} and \eqref{equation: alfa 2}, we get 
	\begin{align*}
		 p_{\rec}(\rho) \quad & = \quad   \Eb_{\xi\sim \Stack(\rho)} \. \big[ \satu_{\rec}(\xb,\xi) \big] \quad =_{\eqref{equation: alfa 2}} \quad  \Eb_{\xi\sim \Stack(\rho)} \. \big[ \satu_{\rec}\big(\xb,\varphi(\xi)\big) \big]\\
		\quad &  =_{\eqref{equation: alfa 1}} \quad  \Eb_{\xi'\sim \Stack(\rho')} \. \big[ \satu_{\rec}(\xb,\xi') \big] \quad = \quad p_{\rec}(\rho')\.,
	\end{align*}
	as desired.
\end{proof}

\medskip

\subsection{Zero-one laws for recurrence}\label{subseczero-one laws}

In this subsection we prove several  zero-one laws for the recurrence of RWLMs.
(These zero-one laws are not to be confused with zero-one laws for the directional transience of cookie random walks~(see e.g., \cite{KZ})).


\smallskip

\begin{proposition}\label{proposition: zero-one law recurrence}
	Consider an RWLM with the initial rotor configuration  $\rho$.
	Then the recurrence probability satisfies
	\[ p_{\rec}(\rho) \ \in \ \{0,1\}\..  \]
\end{proposition}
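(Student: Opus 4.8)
The plan is to establish a $0$–$1$ law by exhibiting a suitable tail-type event and applying Kolmogorov's $0$–$1$ law, with the technical work going into showing that recurrence is insensitive to the ``early'' part of the random stack. Recall that $p_{\rec}(\rho) = \Eb_{\xi \sim \Stack(\rho)}[\satu_{\rec}(\xb,\xi)]$, where under $\Stack(\rho)$ the columns $\xi(x,\cdot)$ for distinct $x \in V$ are mutually independent, each being a Markov chain trajectory for $\MC_x$ started at $\rho(x)$. The key structural input is Theorem~\ref{theorem: recurrence is invariant under stack relation}: if $\xi'$ is obtained from a regular stack $\xi$ by finitely many popping operations (and both are regular), then $(\xb,\xi)$ is recurrent iff $(\xb,\xi')$ is recurrent. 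Since every local mechanism $\MC_x$ is irreducible on a finite state space, $\xi \sim \Stack(\rho)$ is regular a.s., so Theorem~\ref{theorem: recurrence is invariant under stack relation} applies to $\satu_{\rec}$-a.s.

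The main step is to argue that $\satu_{\rec}(\xb,\xi)$ is, up to a null set, measurable with respect to the tail $\sigma$-algebra of the family of ``shifted columns.'' Concretely, fix an exhaustion $V_1 \subseteq V_2 \subseteq \cdots$ of $V$ by finite sets with $\bigcup_k V_k = V$, and for each $k$ let $\Fscr_k$ be the $\sigma$-algebra generated by $\{\xi(x, m) : x \notin V_k,\ m \ge 0\}$ together with the ``deep'' entries $\{\xi(x,m): x \in V_k,\ m \ge N_k\}$ for a truncation level $N_k \to \infty$ chosen so that $\bigcap_k \Fscr_k$ is the tail $\sigma$-algebra; by the independence of columns under $\Stack(\rho)$ and the Markov property within each column, $\bigcap_k \Fscr_k$ is trivial by Kolmogorov's $0$–$1$ law (or the Hewitt–Savage-type argument for independent sequences). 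It then suffices to show $\satu_{\rec}(\xb,\xi)$ agrees a.s.\ with an $\bigcap_k \Fscr_k$-measurable function. For this I would use Theorem~\ref{theorem: recurrence is invariant under stack relation} together with the observation that popping off the first $N$ entries of finitely many columns does not affect recurrence: given any configuration of the finitely many ``early'' entries $\{\xi(x,m): x \in V_k,\ m < N_k\}$, one can apply the popping operations $\varphi_x$ to remove exactly those entries — this is a finite product of popping operations, and the regularity of $\xi$ a.s.\ guarantees the resulting stack is again regular — so $\satu_{\rec}(\xb,\xi)$ equals $\satu_{\rec}(\xb,\varphi(\xi))$, and the latter depends only on the deep/outside entries, i.e.\ is $\Fscr_k$-measurable up to a null set. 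Letting $k \to \infty$ gives that $\satu_{\rec}$ is $\bigcap_k \Fscr_k$-measurable modulo null sets.

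Since $\bigcap_k \Fscr_k$ is trivial, $\satu_{\rec}(\xb,\xi)$ is a.s.\ constant, hence $p_{\rec}(\rho) = \Eb[\satu_{\rec}(\xb,\xi)] \in \{0,1\}$, which is the claim. (Alternatively, one can phrase the same argument more directly: the popping-invariance shows that the event $\{(\xb,\xi)\text{ recurrent}\}$ is invariant, modulo null sets, under arbitrary finite modifications of finitely many columns of $\xi$, and an independent-columns $0$–$1$ law then forces its probability into $\{0,1\}$. Lemma~\ref{lemma: recurrence rho invariant under finite changes} is essentially the $\rho$-level shadow of this statement and its proof already contains the key popping manipulation, so the argument here is a refinement of that idea at the level of individual stack realizations rather than rotor configurations.)

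The step I expect to be the main obstacle is the measurability bookkeeping: one must carefully choose the truncation levels $N_k$ and verify that the resulting $\bigcap_k \Fscr_k$ really is a tail $\sigma$-algebra of an independent family (so that a genuine $0$–$1$ law applies), while simultaneously ensuring that the finite popping map $\varphi$ used to discard early entries stays a \emph{finite} product of popping operations on a \emph{regular} stack — the latter is the hypothesis needed to invoke Theorem~\ref{theorem: recurrence is invariant under stack relation}. Everything else is a routine application of Theorem~\ref{theorem: recurrence is invariant under stack relation} and Kolmogorov's $0$–$1$ law.
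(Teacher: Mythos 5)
Your first step is sound and is a nice use of Theorem~\ref{theorem: recurrence is invariant under stack relation}: if two regular stacks agree outside a finite ``rectangle'' $\{(x,m): x\in V_k,\ m<N_k\}$, popping each column of $V_k$ exactly $N_k$ times in both stacks produces the \emph{same} stack, so applying the theorem twice shows $\satu_{\rec}$ is a.s.\ invariant under finite modifications of the entries of $\xi$. The genuine gap is in the step you yourself flag as the main obstacle: the triviality of $\bigcap_k \Fscr_k$ does \emph{not} follow from Kolmogorov's zero-one law, because the family $\{\xi(x,m)\}_{x,m}$ is not independent --- within a column $\xi(x,\cdot)$ is a Markov chain, so the ``deep'' entries $m\ge N_k$ are correlated with the early ones, and $\Fscr_k$ is not independent of the $\sigma$-field of the discarded early entries. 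Your $\bigcap_k\Fscr_k$ is not the tail field of an independent sequence (the independent objects are the whole columns, and your event is only shown invariant under \emph{finite-entry} changes, not under changing an entire column, so it is not placed in the column-tail field either). Making this route work requires real additional input: tail triviality of each individual column (true for an irreducible finite-state chain started from a fixed state, but this needs an argument or citation, e.g.\ a Blackwell--Freedman-type result, and care in the periodic case) together with a lemma allowing the exchange $\bigcap_k(\mathcal{A}_k\vee\mathcal{G})=(\bigcap_k\mathcal{A}_k)\vee\mathcal{G}$ for $\mathcal{G}$ independent of $\mathcal{A}_0$, plus a diagonal argument over the simultaneous exhaustion in $k$ and $N_k$; such exchanges of decreasing intersections with joins of $\sigma$-fields are false in general and are exactly where the content lies. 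As written, the proof asserts the crucial zero-one law rather than proving it.

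For comparison, the paper avoids this issue entirely by running Lévy's zero-one law along the filtration $\Fc_m=\sigma(\Xb_0,\rho_0,\ldots,\Xb_m,\rho_m)$ of the walk itself: since recurrence of $(\xb,\xi)$ is equivalent to recurrence of $(\Xb_m,\xi_m)$, Theorem~\ref{theorem: recurrence is invariant under stack relation} and the Markov property of the stacks give $\Eb\big[\satu_{\rec}(\xb,\xi)\mid\Fc_m\big]=p_{\rec}(\rho_m)$, and Lemma~\ref{lemma: recurrence rho invariant under finite changes} shows this equals the constant $p_{\rec}(\rho)$; Lévy's zero-one law then forces $p_{\rec}(\rho)=\satu_{\rec}(\xb,\xi)$ a.s., hence $p_{\rec}(\rho)\in\{0,1\}$. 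That argument only needs convergence of conditional expectations along an increasing filtration, not triviality of any tail $\sigma$-field of a dependent array, which is why it closes where your proposal currently does not.
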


\smallskip
We then say that a rotor configuration $\rho$ is \emph{recurrent} (with respect to the RWLM) if $p_{\rec}(\rho)=1$, and is \emph{transient} if $p_{\rec}(\rho)=0$.

%
%
%

We now present the proof of Proposition~\ref{proposition: zero-one law recurrence}.

\begin{proof}[Proof of Proposition~\ref{proposition: zero-one law recurrence}]
	Let $\xb$ be an arbitrary element of $V^n$,
	 let  $(\Xb_t,\rho_t)_{t\geq 0}$ be the RWLM with initialization $(\xb,\rho)$,
	 and let  $(\Xb         _t,\xi_t)_{t\geq 0}$ be the corresponding stack walk.
	 We write \. $\Fc_m:=\sigma(\Xb_0,\rho_0,\ldots \Xb_m,\rho_m)$ \. ($m\geq 0$)\..
	Note that $(\Fc_{m})_{m \geq 0}$ is a filtration by definition.
	
	
	Let $m \geq 0$. 
	Note that 
	the stack walk \. $(\Xb_t,\xi_t)_{t\geq 0}$ \. is recurrent  \  if and only if  \  the stack walk \. $(\Xb_{t+m},\xi_{t+m})_{t\geq 0}$ \. is recurrent. 
	This implies that,  the pair \. $(\xb,\xi)$ \. is recurrent,  if and only if,  the  pair  \. $(\Xb_m,\xi_m)$ \. is recurrent.
	It then follows that 
	\begin{align*}
	\Eb_{\xi \sim \Stack(\rho) } \big[ \satu_{\rec}\big(\xb, \xi\big) \. \big | \.  \Fc_m \big] \quad & =  \quad   \Eb_{\xi \sim \Stack(\rho) } \big[ \satu_{\rec}\big(\Xb_{m}, \xi_{m}\big) \. \big | \.  \Fc_m \big] \\
	\quad & =_{\text{Thm}~\ref{theorem: recurrence is invariant under stack relation}}   \quad \Eb_{\xi \sim \Stack(\rho) } \big[ \satu_{\rec}\big(\xb, \xi_{m}\big) \. \big | \.  \Fc_m \big]\..
	\end{align*}
	Now note that  $\xi_m$ is equal in distribution to $\Stack(\rho_m)$,
	and  $\rho_m$  is adapted to $\Fc_m$.
	Plugging these observations into the equation above,
		\begin{align}\label{equation: alfa 3}
	\Eb_{\xi \sim \Stack(\rho) } \big[ \satu_{\rec}\big(\xb, \xi\big) \. \big | \.  \Fc_m \big] 
	\quad = &  \quad \Eb_{\xi' \sim \Stack(\rho_m) } \big[ \satu_{\rec}\big(\xb, \xi'\big)  \big] \quad = \quad p_{\rec}(\rho_m)\..
	\end{align}
	(Note that  $p_{\rec}(\rho_m)$ here is regarded as  a random variable adapted to $\Fc_m$.)
	On the other hand, since $\rho_m$ differs from $\rho$ by at most $m$ many vertices,
	it follows from Lemma~\ref{lemma: recurrence rho invariant under finite changes} that
	 \ 	$p_{\rec}(\rho_m) \. = \. p_{\rec}(\rho)$\..
	 Combining this observation with \eqref{equation: alfa 3} gives us  
	 \begin{align}\label{equation: alfa 4}
	 	\Eb_{\xi \sim \Stack(\rho) } \big[ \satu_{\rec}\big(\xb, \xi\big) \. \big | \.  \Fc_m \big] 
	 	\quad = &  \quad p_{\rec}(\rho).
	 \end{align}

	Thus we have
	\begin{equation}\label{equation: zero-one law 1}
	p_{\rec}(\rho)  \quad =  \quad  \Eb_{\xi \sim \Stack(\rho) } \big[ \satu_{\rec}\big(\xb, \xi\big) \. \big | \.  \Fc_m \big]  \quad  \overset{m \to \infty}{\longrightarrow}  \quad \satu_{\rec}\big(\xb, \xi\big)  \qquad \Fc_{\infty}\text{-a.s.},
	\end{equation}
where the convergence in the equation above is due to Levy's zero-one law~(see e.g.~{\cite[Theorem~4.6.9]{Dur}}).
	Since $p_{\rec}(\rho)$ is a constant that does not depend on $m$, it then follows from \eqref{equation: zero-one law 1} that $p_{\rec}(\rho)\in \{0,1\}$, as desired.
\end{proof}

\medskip

Let \. $\IUD$ \.  be the probability 
measure on rotor configurations of $G$ where the rotor for $x \in V$ is  chosen independently and uniformly at random from the neighbors of $x$.
(By an abuse of the notation, we refer to this measure as the $\IUD$ measure, even though two different vertices can have different sets of neighbors, and thus different rotor distributions.)
We now consider RWLMs in which the initial rotor configuration is sampled from 
\. $\IUD$\.. 
Note that there are two (independent) layers of randomness involved in this process now:
The first layer being the random initial rotor configuration,
and the second layer being the transition rules of RWLM in \eqref{equation: transition rule RWLM}.


\smallskip

\begin{proposition}\label{proposition: zero-one law IUD}
	Consider an RWLM with the initial rotor configuration $\rho$ sampled from the $\IUD$ measure.
	Then exactly one of the following scenarios holds:
	\begin{itemize}
		\item $p_{\rec}(\rho) \ = \ 0$  \  for almost every  $\rho$ sampled from $\IUD$; or  
		\item $p_{\rec}(\rho) \ = \ 1$  \ for almost every  $\rho$ sampled from $\IUD$.  
	\end{itemize}
\end{proposition}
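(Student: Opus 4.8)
The plan is to reduce the claim to a zero-one law for the product measure $\IUD$, using the two tools already available: Proposition~\ref{proposition: zero-one law recurrence}, which gives $p_{\rec}(\rho) \in \{0,1\}$ for every fixed $\rho$, and Lemma~\ref{lemma: recurrence rho invariant under finite changes}, which says $p_{\rec}$ is unchanged under modifying $\rho$ at finitely many vertices. Because of Proposition~\ref{proposition: zero-one law recurrence}, the map $\rho \mapsto p_{\rec}(\rho)$ is the indicator of the event $A := \{\rho : p_{\rec}(\rho) = 1\}$, and the proposition is equivalent to $\IUD(A) \in \{0,1\}$. Before anything else I would record that $A$ is measurable: for a fixed $\xb \in V^n$ the stack walk $(\Xb_t,\xi_t)_{t\geq 0}$ is a deterministic (hence measurable) function of $\xi$, so $\satu_{\rec}(\xb,\cdot)$ is measurable, and since $\rho \mapsto \Stack(\rho)$ is a Markov kernel, $p_{\rec}(\rho) = \Eb_{\xi \sim \Stack(\rho)}[\satu_{\rec}(\xb,\xi)]$ is measurable in $\rho$.

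Next I would make the product structure explicit. Since $G$ is connected and locally finite, $V$ is countable; enumerate $V = \{v_1, v_2, \ldots\}$ and set $Z_i := \rho(v_i)$, so that under $\IUD$ the variables $(Z_i)_{i \ge 1}$ are independent and $\sigma(Z_1, Z_2, \ldots)$ is the full $\sigma$-algebra on rotor configurations. By Lemma~\ref{lemma: recurrence rho invariant under finite changes}, whether $\rho \in A$ does not depend on any finite subcollection $Z_1, \ldots, Z_k$. Writing $\Gc_k := \sigma(Z_1, \ldots, Z_k)$, a Fubini computation for the product measure $\IUD$ then shows that $A$ is independent of $\Gc_k$ for every $k$, hence $\Eb_{\IUD}[\satu_A \mid \Gc_k] = \IUD(A)$ for all $k$. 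Letting $k \to \infty$ and invoking Levy's zero-one law (exactly as in the proof of Proposition~\ref{proposition: zero-one law recurrence}) gives $\Eb_{\IUD}[\satu_A \mid \Gc_k] \to \satu_A$ a.s., so $\satu_A = \IUD(A)$ a.s.; as $\satu_A$ is $\{0,1\}$-valued this forces $\IUD(A) \in \{0,1\}$. Thus either $p_{\rec}(\rho) = 1$ for $\IUD$-a.e.\ $\rho$ or $p_{\rec}(\rho) = 0$ for $\IUD$-a.e.\ $\rho$ (using once more that $p_{\rec}$ is $\{0,1\}$-valued), which is the asserted dichotomy. Equivalently, one may phrase this as: $A$ agrees, modulo $\IUD$-null sets, with a tail event of the independent sequence $(Z_i)$, so $\IUD(A) \in \{0,1\}$ by Kolmogorov's zero-one law.

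I do not expect a serious obstacle here: the substance is already contained in Proposition~\ref{proposition: zero-one law recurrence} and Lemma~\ref{lemma: recurrence rho invariant under finite changes}. The only slightly delicate point is the step turning ``$A$ is invariant under finite modifications of $\rho$'' into ``$A$ is independent of each $\Gc_k$'' --- i.e.\ the fact that such an $A$ agrees, up to $\IUD$-null sets, with a tail event of the independent sequence $(Z_i)$. This is standard for a product measure on a countable product of finite sets (here the finite neighbor-sets $\Ngh(v_i)$) and is verified by the Fubini computation indicated above; alternatively it can be bypassed by running the Levy martingale argument directly along the filtration $(\Gc_k)_{k \ge 0}$, just as in the proof of Proposition~\ref{proposition: zero-one law recurrence}.
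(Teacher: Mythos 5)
Your proposal is correct and takes essentially the same route as the paper: both arguments rest on the invariance of the event $A=\{\rho : p_{\rec}(\rho)=1\}$ under finite modifications of $\rho$ (Theorem~\ref{theorem: recurrence is invariant under stack relation}, equivalently Lemma~\ref{lemma: recurrence rho invariant under finite changes}) to reduce the statement to a zero-one law for the product measure $\IUD$, and then conclude via Proposition~\ref{proposition: zero-one law recurrence}. The only cosmetic difference is that the paper places $A$ in the tail $\sigma$-field and cites Kolmogorov's zero-one law, while you run the equivalent Levy-martingale argument along the forward filtration $(\Gc_k)$ — a variant you yourself note collapses to the Kolmogorov route.
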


\smallskip

%
%
%


\smallskip

\begin{proof}
		Let $x_1,x_2,\ldots $ be an arbitrary ordering of elements of  $V$. Let  \. $\Fc'_{m}:=\sigma(\rho(x_m), \rho(x_{m+1}),\ldots )$ \. be the $\sigma$-field on rotor configurations of $G$ that depends only on the rotor configuration $\rho$ at $V \setminus \{x_1,\ldots, x_{m-1}\}$.
	Let $A$ be the set of rotor configurations of $G$ given by
	\[ A \ := \ \{ \rho \mid  p_{\rec}(\rho)=1 \}. \]
	By Theorem~\ref{theorem: recurrence is invariant under stack relation},
	for arbitrary rotor configurations $\rho,\rho'$ that differ by finitely many vertices, 
	$\rho$ is contained in $A$ implies $\rho'$ is also contained in $A$.
	This implies that $A \in \Fc'_{m}$ for every $m \geq 0$,
	and hence $A$ is contained in the  tail $\sigma$-field \. $\bigcap_{m \geq 0} \Fc'_m$.
	Since $\rho$ is sampled from the $\IUD$ measure,
	it then follows from Kolmogorov's zero-one law~(see e.g., \cite[Thm~2.5.3]{Dur})
	that
%
	 \[ \Pb_{\rho \sim \IUD} [ A ] \ \in \ \{0,1\}.  \]
	On the other hand, Proposition~\ref{proposition: zero-one law recurrence} gives us  
	\begin{align*}
		\Pb_{\rho \sim \IUD} [ A ] \ = \ 0  \qquad \text{implies} \qquad
		 \text{$p_{\rec}(\rho) \ = \ 0$  \ for almost every  $\rho$ sampled from $\IUD$;}\\
		 \Pb_{\rho \sim \IUD} [ A ] \ = \ 1  \qquad \text{implies} \qquad
		 \text{$p_{\rec}(\rho) \ = \ 1$  \ for almost every  $\rho$ sampled from $\IUD$\..}
	\end{align*}
	The proposition now follows from combining the two observations above.
\end{proof}

\bigskip

\section{Horizontal-vertical walks}\label{sechv walks}

The \emph{horizontal-vertical walk}, or \emph{$\Hor$--$\Ver$ walk} for short,  (introduced by Chan et al.~\cite{CGLL}),
is a nearest-neighbor random walk on $\Zb^2$,
where each  vertex of $\Zb^2$ is labeled either $\Hor$ or $\Ver$.
Initially we have  $n$ walkers dropped to  the origin $(0,0)$ in $\Zb^2$.
Each of the $n$ walkers performs the following move in a cyclic order:
the chosen walker
 changes the label of its current location with probability $q\in [0,1]$, and does not change the label with probability $1-q$.
Then, the walker takes a mean zero horizontal step if the new label is $\Hor$,
and a mean zero vertical step if the new label is $\Ver$. 

\smallskip

Formally, the  $\Hor$--$\Ver$ walk is an instance of RWLM (see~\eqref{equation: transition rule RWLM}) where each local mechanism $\MC_x$ is given by the transition rule 
\begin{itemize}
	\item If \. $y_1-x \in \{(1,0),-(1,0)\}$\., then:
	\[ p_x(y_1,y_2) \ = \ 
		\begin{cases}
		\frac{1-q}{2} & \text{ if }\  y_2-x \in \{(1,0),-(1,0)\}; \\
		\frac{q}{2} & \text{ if }\  y_2-x \in \{(0,1),-(0,1)\}.
		\end{cases}   \] 
	\item If \. $y_1-x \in \{(0,1),-(0,1)\}$\., then:
	\[ p_x(y_1,y_2) \ = \ 
	\begin{cases}
	\frac{q}{2} & \text{ if }\  y_2-x \in \{(1,0),-(1,0)\}; \\
	\frac{1-q}{2} & \text{ if }\  y_2-x \in \{(0,1),-(0,1)\}.
	\end{cases}   \] 
\end{itemize}
Note that this transition rule is equal to the one described in the beginning of the section.
Indeed, the correspondence is 
\begin{itemize}
	\item A vertex $x$  is labeled $\Hor$ if  \. $\rho(x)-x \in \{(1,0), -(1,0)\}$\.; and 
	\item A vertex $x$  is labeled $\Ver$ if  \. $\rho(x)-x \in \{(0,1), -(0,1)\}$\..
\end{itemize}

We will adopt the following notation throughout the rest of this paper:
\begin{itemize}
	\item A rotor configuration $\rho$ is simultaneously, a function $\rho: V \to V$, and a function $\rho: V \to \{\Hor,\Ver\}$, by the correspondence above;
	\item $q$ is strictly greater than $0$. This is  so that, for each $x \in V$,  the local mechanism $\MC_x$ for the $\Hor$--$\Ver$ walk is irreducible.
	\item  $\Pb$ \. and  \. $\Eb$ \. will be shorthands for $\Pb_{\xi \sim \Stack(\rho)}$\. and \. $\Eb_{\xi \sim \Stack(\rho)}$\., respectively.
	Recall from Section~\ref{subsecRWLM} that  $\xi$ is the random stack on $\Zb^2$ that arises from the Markov chains corresponding to the local mechanisms of the $\Hor$--$\Ver$ walk with initialization $\rho$.
	(Note that the random stack $\xi$ is the source of randomness for the $\Hor$--$\Ver$ walk.)
	
\end{itemize}


\medskip

We now restate the main results in Section~\ref{secintro} using the notation in Section~\ref{secpreliminaries}.

\smallskip

\begin{reptheorem}{thmrecurrence many walker}
	Let \. $q >0$ \.  and \. $n \geq \lfloor \frac{|4q-2|}{q} \rfloor + 1$\..
	Then, for every initial rotor configuration $\rho$, the corresponding $\Hor$--$\Ver$ walk with $n$ walkers satisfies
		\[p_{\rec}(\rho)=1.  \]
\end{reptheorem}

\smallskip

We will prove Theorem~\ref{thmrecurrence many walker} in Section~\ref{secproof of recurrnce many walkers}.
The following is a corollary of Theorem~\ref{thmrecurrence many walker} for walks with  a single walker.

\smallskip

\begin{repcorollary}{correcurrence single walker}
	Let $q \in (\frac{2}{5},\frac{2}{3})$ and $n=1$.
	Then, for every initial rotor configuration $\rho$, the corresponding $\Hor$--$\Ver$ walk with a single walker satisfies
	\[\pushQED{\qed} 
	 p_{\rec}(\rho)=1.  \qedhere 
	 \popQED\]
\end{repcorollary}

\smallskip

When $\rho$ is sampled from $\IUD$ measure, the recurrence regime in Corollary~\ref{correcurrence single walker} can be expanded.

\smallskip

\begin{reptheorem}{thmrecurrence IID}
	Let $q \in (\frac{1}{3},1]$, and let $n=1$.
	Then, for 
	the $\Hor$--$\Ver$ walk with a single walker,
	\[\text{$p_{\rec}(\rho) \ = \ 1$  \qquad  for \ almost every rotor configuration \. $\rho$ sampled from $\IUD$\..} \]
\end{reptheorem}

\smallskip

We split the proof of Theorem~\ref{thmrecurrence IID} into two parts:
we prove the case  \. $q \in (\frac{1}{3},1)$ \. in Section~\ref{secproof of recurrence IID subcritical}, and the case 
\. $q =1$ \. in Section~\ref{secproof of recurrence IID critical}.

\bigskip

\section{Martingale method}\label{secmartingale}

In this section we construct  a  martingale that tracks the number of departures from the origin by $\Hor$--$\Ver$ walks, and then we apply the optional stopping theorem   to get a lower bound for the return probabilities of $\Hor$--$\Ver$ walks.

\smallskip

\subsection{Frozen $\Hor$--$\Ver$ walks}\label{subsecfrozen walks}
Let $r \geq 1$.
We denote by \. $B_r$ \. the set of vertices in  $\Zb^2$ that are of Euclidean distance strictly less than $r$ from the origin, and by $\partial B_r$ the outer boundary of $B_r$,
\[B_r \ := \ \{ x \in \Zb^2 \. \mid \. |x| < r\}; \qquad \partial B_r \ := \ \{ x \in \Zb^2 \setminus B_r \. \mid \.  N(x) \cap B_r  \. \neq \.  \varnothing   \}\..\]

We now consider the variant of $\Hor$--$\Ver$ walks where each walker is immediately frozen if it reaches $\partial B_r$.

\begin{definition}[Frozen $\Hor$--$\Ver$ walks]
Let $r \geq 1$,
and let $\rho$ be a rotor configuration.	
	The \emph{frozen $\Hor$--$\Ver$ walk} 
	 \. $(\Yb_t:=(Y_t^{(1)},\ldots, Y_t^{(n)}),\zeta_t)_{t\geq 0}$ \. is defined recursively by
	 \begin{enumerate}
	 \item Initially $(\Yb_0,\zeta_0) \ := \  \big( \big((0,0),\ldots, (0,0)\big),\rho\big)$.
	 \item At the $t$-th step of the walk, let $i_{t}$ be the unique integer in $\{1,\ldots,n\}$ such that $i_{t} \equiv t \text{ mod } n$.
	 \item  Let the $i_{t}$-th walker perform one step of the $\Hor$--$\Ver$ walk if 
	 its current location \. $Y_{t}^{(i_{t})}$ \. is not contained in $\partial B_r$,
	 and let the walker skip its turn if its current location is contained in $\partial B_r$\..
	 \end{enumerate}	 
\end{definition}

Note that both $\Yb_t$ and $\zeta_t$  are random variables that depend on $r$, and  we do not write out their dependence on $r$ to lighten the notation.

\smallskip

We denote by  \. $R_{t,r}$ \.  ($t\geq 0$) \.  the number of returns to  the origin  up to  $t$ by the frozen walk,
\[ R_{t,r} \quad :=  \quad \big | \big \{ s \in \{1,\ldots, t\} \mid Y_{s-1}^{(i)}\neq (0,0), \  Y_{s}^{(i)} = (0,0), \ \text{ for some } i \in \{1,\ldots,n\} \big\} \big |\..  \]

The \emph{return probability} \. $p_{k,r}(\rho)$ \. is the probability that the walkers return to  the origin  at least  $k$ times during the lifetime of the frozen walk, 
\begin{equation}\label{eqreturn probability}
	p_{k,r}(\rho) \quad := \quad  \lim_{t \to \infty} \Pb \big[R_{t,r} \geq k \big]\..  
\end{equation}

 \medskip
 
 Recall that  \. $(\Xb_t,\rho_t)_{t\geq 0}$ \. is the \emph{unfrozen} $\Hor$--$\Ver$ walk (with the same initial rotor configuration $\rho$),
 and \. $p_{rec}(\rho)$ \. is the probability that  $(\Xb_t,\rho_t)_{t\geq 0}$ is recurrent (i.e., every vertex  is visited infinitely many times).
 
 \smallskip 
 \begin{lemma}\label{lemreturn probability limit}
 Let $q> 0$.
 Then, for the $\Hor$--$\Ver$ walk with the initial rotor configuration $\rho$,
 \[ p_{\rec}(\rho)   \ = \ \lim_{k\to \infty}  \lim_{r \to \infty} p_{k,r}(\rho)\.. \]
 \end{lemma}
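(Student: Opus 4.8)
The plan is to prove the identity $p_{\rec}(\rho) = \lim_{k\to\infty}\lim_{r\to\infty} p_{k,r}(\rho)$ by relating the frozen walks on $B_r$ to the unfrozen walk and then using the zero-one law (Proposition~\ref{proposition: zero-one law recurrence}) to upgrade a positive probability into certainty. First I would fix a coupling: realize all the frozen walks $(\Yb_t,\zeta_t)$ (for every $r$) and the unfrozen walk $(\Xb_t,\rho_t)$ on the \emph{same} random stack $\xi \sim \Stack(\rho)$ via the stack-walk description of Section~\ref{subsecRWLM}. Under this coupling, the frozen walk on $B_r$ agrees with the unfrozen walk up to the first time a walker would exit $B_r$, and the number of returns $R_{t,r}$ to the origin is monotone in $r$ (a larger box only delays freezing, so freezing can only remove returns — this is essentially Lemma~\ref{lemmonotonicity} applied with the turn order that skips a walker once it is frozen versus the cyclic turn order). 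Hence $\lim_{r\to\infty} p_{k,r}(\rho)$ exists, and by monotone/dominated convergence it equals $\Pb[R_\infty \geq k]$, where $R_\infty := R_\infty((0,0))$ is the total number of returns to the origin by the unfrozen walk. Taking $k\to\infty$ then gives $\lim_{k\to\infty}\lim_{r\to\infty} p_{k,r}(\rho) = \Pb[R_\infty = \infty]$.

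It remains to identify $\Pb[R_\infty = \infty]$ with $p_{\rec}(\rho)$. One inclusion is immediate: if the unfrozen walk is recurrent, then in particular the origin is visited infinitely often, so $R_\infty = \infty$; thus $p_{\rec}(\rho) \leq \Pb[R_\infty=\infty]$. For the reverse, I would invoke the dichotomy of Lemma~\ref{lemma: dichotomy recurrence transience}: the stack $\xi$ is regular a.s.\ (every $\MC_x$ is irreducible on a finite state space), and the cyclic turn order is regular, so a.s.\ the walk is either recurrent or transient. On the transient event every vertex — the origin included — is visited only finitely often, so $R_\infty < \infty$ there. Therefore $\{R_\infty = \infty\}$ is contained, up to a null set, in the recurrence event, giving $\Pb[R_\infty = \infty] \leq p_{\rec}(\rho)$. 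Combining the two inclusions yields $\Pb[R_\infty=\infty] = p_{\rec}(\rho)$, which completes the chain of equalities. (Here I used $q>0$ precisely to guarantee irreducibility of each $\MC_x$, hence regularity of $\xi$.)

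The step I expect to require the most care is the monotonicity $p_{k,r}(\rho) \leq p_{k,r'}(\rho)$ for $r \leq r'$ together with the convergence $\lim_{r\to\infty} p_{k,r}(\rho) = \Pb[R_\infty \geq k]$. The subtlety is that "freezing at $\partial B_r$" must be phrased as a (non-cyclic but still well-defined) turn order on the shared stack so that Lemma~\ref{lemmonotonicity} applies verbatim: once walker $i$ is frozen we set $o_t = 0$ whenever it would be walker $i$'s turn, and as $r$ grows this turn order dominates the previous one in the sense of that lemma. For the convergence one notes that for each fixed finite time horizon $t$ the frozen and unfrozen walks coincide on the event that no walker has reached $\partial B_r$ by time $t$, an event whose probability tends to $1$ as $r\to\infty$ for fixed $t$; combined with monotonicity in $r$ and in $t$ this pins down $\lim_r \lim_t \Pb[R_{t,r}\geq k] = \lim_t \Pb[R_\infty((0,0)) \geq k \text{ within time } t] = \Pb[R_\infty \geq k]$. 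Everything else is a routine application of monotone convergence and the already-established invariance results.
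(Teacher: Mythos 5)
Your proposal is correct and follows essentially the same route as the paper: the paper compresses your coupling, your frozen-versus-unfrozen comparison, and your appeal to the recurrence/transience dichotomy into the single set identity $\{\text{recurrent}\}=\bigcap_{k\geq1}\bigcup_{t\geq1}\bigcup_{r\geq1}\{R_{t,r}\geq k\}$, followed by an interchange of monotone limits in $k$, $r$, $t$. The only nitpick is that Lemma~\ref{lemmonotonicity} compares an arbitrary turn order against a \emph{regular} one, so it gives the domination $R_{t,r}\le R_\infty$ but not literally the monotonicity in $r$ between two frozen walks; your fixed-$t$ sandwich argument makes that monotonicity dispensable, and the paper itself asserts it with no more justification.
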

 \smallskip
 
 \begin{proof}

 	
 	The event that $(\Xb_t,\rho_t)_{t\geq 0}$ is recurrent is equivalent to the event
 	\[ \bigcap_{k \geq 1} \, \bigcup_{t \geq 1} \,  \bigcup_{r \geq 1} \, \{ R_{t,r} \geq k\}. \]
 	We then have 
 	\begin{align*}
 		 & p_{\rec}(\rho)   \ = \  \Pb \bigg[\bigcap_{k \geq 1} \, \bigcup_{t \geq 1} \,  \bigcup_{r \geq 1} \, \{ R_{t,r} \geq k\}  \bigg]  \ = \  \lim_{k \to \infty} \Pb \bigg[ \bigcup_{t \geq 1} \,  \bigcup_{r \geq 1} \, \{ R_{t,r} \geq k\}  \bigg] \\
 		  \quad &= \  \lim_{k \to \infty} \Pb \bigg[ \bigcup_{r \geq 1} \,  \bigcup_{t \geq 1} \, \{ R_{t,r} \geq k\}  \bigg]  \ = \ \lim_{k\to \infty}  \lim_{r \to \infty} \lim_{t \to \infty} \Pb \big[R_{t,r} \geq k \big] \ = \
 		 \lim_{k\to \infty}  \lim_{r \to \infty} p_{k,r}(\rho)\.,
 	\end{align*}
 	where the second equality is because of the monotonicity in $k$, and the fourth equality is because of the monotonocity in $r$ and $t$.
 	This proves the lemma.
 \end{proof}

\medskip

\subsection{The martingale}
The \emph{potential kernel} $a:\Zb^2 \to \Rb$ for two-dimensional random walks is 
\[a(x): \quad = \quad \lim_{m \to \infty} \. \sum_{i=0}^{m-1} \. \big(p_i(0,0)- p_i(x)\big),   \]
where $p_i(x)$ denotes the probability for the simple random walk in $\Zb^2$ starting at the origin to visit $x$ at the $i$-th step.
Equivalently, the potential kernel is the unique nonnegative function of sublinear growth satisfying
\begin{align}\label{equation: potential kernel harmonic}
a(0,0) \ = \ 0, \qquad \text{ and } \qquad  
a(x) \ = \ -\mathbbm{1}\{x=o\} + \frac{1}{4} \sum_{y \in \Ngh(x)} a(y) \qquad (x\in \Zb^2)\.,
\end{align}
 by the uniqueness principle for harmonic functions on $\Zb^2$. 
We refer the reader to \cite{Law} for  references for the potential kernel. 

Let \. $\ft_{\Hor}: \Zb^2 \to \Rb$ \. and \. $\ft_{\Ver}: \Zb^2 \to \Rb$ \. be functions defined by 
\begin{align*}
	\ft_{\Hor}(x) \ :=& \   \frac{a\big(x+ (0,1)\big) \. + \. a\big(x  -(0,1)\big) \. -\.2a(x)}{4};\\
	\ft_{\Ver}(x) \ :=& \   \frac{a\big(x+ (1,0)\big) \. + \. a\big(x  -(1,0)\big) \. -\.2a(x)}{4}.
\end{align*}
Note that we have from \eqref{equation: potential kernel harmonic} that
\begin{equation}\label{equation: weight harmonic}
\ft_{\Hor}(x)+\ft_{\Ver}(x) \quad = \quad  \mathbbm{1}\{x=(0,0)  \}.   
\end{equation}


\smallskip

\begin{definition}\label{defmartingale}
	The martingale \. $M_t := M_t(r,\rho)$ \. ($t\geq 0$) for the frozen walk $(\Yb_t,\zeta_t)_{t \geq 0}$ is   
	\begin{equation*}
	M_t  \quad  := \quad   \sum_{i=1}^n a(Y_t^{(i)}) \  - \  N_t  \. +  \.  \frac{2q-1}{q}\sum_{x \in \bigcup_{i=1}^n \{Y_0^{(i)},\ldots, Y_{t}^{(i)}\}} \wt(x,t) \. - \. \wt(x,0),   
	\end{equation*}
	where  \. $N_t$ \. is the number of departure from the origin by the frozen walk up to time $t$,
	\[ N_{t} \quad :=  \quad \big | \big \{ s \in \{1,\ldots, t\} \mid Y_{s-1}^{(i)}= (0,0), \  Y_{s}^{(i)} \neq (0,0), \ \text{ for some } i \in \{1,\ldots,n\} \big\} \big |\.,  \]
	 and   \. $\wt(x,t)$ \.  is the \emph{weight} of the rotor of $x$ at time $t$ given by
	\begin{equation}\label{eqweight}
		\wt(x,t)  \ := \   
		\begin{cases}
			f_{\Hor}(x) & \text{ if } \  \zeta_t(x) = \Hor\.;\\
			f_{\Ver}(x) & \text{ if } \  \zeta_t(x) = \Ver\..
		\end{cases}
	\end{equation} 
\end{definition}

\smallskip

(Note that the martingale is not defined when $q=0$.)
Throughout  the rest of this paper,
 we denote by 
$\mathcal{F}_t:=\sigma(\Yb_0,\zeta_0,\ldots, \Yb_t,\zeta_t)$ \.  the $\sigma$-algebra generated by the  first $t$ steps of the frozen walk. 

\smallskip
\begin{lemma}
	The sequence $(M_{t})_{t \geq 0}$ is a martingale with respect to the filtration $(\Fc_t)_{t \geq 0}$.
\end{lemma}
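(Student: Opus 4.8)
The plan is to verify the martingale property by computing the one-step conditional increment $\Eb[M_{t+1}-M_t \mid \Fc_t]$ and showing it vanishes. Write $M_t = A_t - N_t + \frac{2q-1}{q}W_t$, where $A_t := \sum_{i=1}^n a(Y_t^{(i)})$, $N_t$ is the departure count, and $W_t := \sum_{x \in S_t}(\wt(x,t)-\wt(x,0))$ with $S_t$ the set of vertices visited by time $t$. Since the walkers move one at a time in the cyclic order, at step $t+1$ exactly one walker, say the $i$-th with $i \equiv t+1 \bmod n$, acts (unless it is frozen on $\partial B_r$, in which case all three terms are unchanged and the increment is trivially zero). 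So I would condition on $\Fc_t$, let $x := Y_t^{(i)}$ be the active walker's location with $x \notin \partial B_r$, and split into the two cases according to whether $\zeta_t(x) = \Hor$ or $\Ver$.

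First I would handle the case $x \neq (0,0)$, where $N_{t+1}=N_t$. Here the resampling step flips the label of $x$ with probability $q$, so the new label is $\Hor$ with some probability and $\Ver$ with the complementary one; given the resulting label, the walker takes a uniform step among the two corresponding neighbors. I would enumerate the four resulting sub-cases (old label $\Hor$ or $\Ver$; new label $\Hor$ or $\Ver$) with probabilities $\frac{1-q}{2},\frac{q}{2}$ etc.\ as in the definition of $\MC_x$, and compute the expected change in $A_t$ (which is $\frac14\sum_{y\in\Ngh(x)}a(y) - a(x)$ weighted appropriately, giving $\ft_{\Hor}(x)$ or $\ft_{\Ver}(x)$ type expressions) and the expected change in $W_t$ (which involves $\wt(x,t+1)-\wt(x,t) = \pm(\ft_{\Hor}(x)-\ft_{\Ver}(x))$, occurring only when the label flips, probability $q$; also a new boundary vertex may enter $S_t$, but it enters with $\wt(\cdot,t+1)-\wt(\cdot,0)=0$ so contributes nothing). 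Collecting terms and using the harmonicity relation \eqref{equation: potential kernel harmonic} together with \eqref{equation: weight harmonic}, the coefficient $\frac{2q-1}{q}$ is exactly what makes the expected increment cancel. Then I would treat the case $x=(0,0)$: now $N_{t+1}-N_t = 1$ whenever the walker actually departs, which it always does (it moves to a neighbor), so $\Eb[N_{t+1}-N_t\mid\Fc_t]=1$; meanwhile the expected change in $A_t$ at the origin picks up the extra $-\mathbbm{1}\{x=(0,0)\}$ term from \eqref{equation: potential kernel harmonic}, and the $W_t$ term is handled as before, so the $-N_t$ and the origin-correction offset and the total increment is again zero.

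The main obstacle — really the only nontrivial bookkeeping — is the careful tracking of the weight term $W_t$: one must check that the set $S_t$ can only grow, that a newly added vertex contributes $0$ to $W_{t+1}-W_t$ because $\wt(x,t+1)-\wt(x,0)$ telescopes to zero for a freshly visited vertex whose label has not yet been resampled at a departure (one has to be slightly careful about whether the resampling at the first visit counts, but since the walker at $x=Y_t^{(i)}$ was already there at time $t$, $x \in S_t$ already), and that only the active vertex $x$ can have its label (hence weight) change at step $t+1$. Once this is pinned down, the algebra reduces to the identity $\ft_{\Hor}(x)+\ft_{\Ver}(x)=\mathbbm{1}\{x=(0,0)\}$ and the definitions of $\ft_{\Hor},\ft_{\Ver}$ in terms of $a$, and the cancellation with coefficient $\frac{2q-1}{q}$ is forced. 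Finally I would note integrability of $M_t$ is immediate since $M_t$ is a finite sum (at most $t+n$ terms with $a$ evaluated inside the bounded region $B_r \cup \partial B_r$, and $W_t$ a finite sum of bounded weights), so $(M_t)_{t\ge0}$ is a genuine martingale with respect to $(\Fc_t)_{t\ge0}$.
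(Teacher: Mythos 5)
Your proposal is correct and follows essentially the same route as the paper: compute the one-step conditional increment, split on the current label, and use the definitions of $\ft_{\Hor},\ft_{\Ver}$ together with the identity \eqref{equation: weight harmonic} so that the coefficient $\frac{2q-1}{q}$ forces the cancellation (with the origin defect offsetting the departure count). Your extra bookkeeping --- the frozen case, newly visited vertices contributing zero to the weight sum, and integrability from confinement to $B_{r+1}$ --- is exactly what the paper absorbs implicitly into its increment identity \eqref{equation: martingale 1}, so there is nothing to fix.
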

\begin{proof}
	It follows immediately from the definition that \. $\Eb[M_0]  = 0$ \. and  \. $\Eb[|M_{t}|]<\infty$\..
	%
	Now note that, by the definition of $M_t$,
	\begin{equation}\label{equation: martingale 1}
	\begin{split}
	M_{t+1}-M_t \  =  \  a(Y_{t+1}^{(i)}) \. - \.  a(Y_t^{(i)}) \. - \. \mathbbm{1}\{ Y_t^{(i)}=(0,0) \} \. + \.  \frac{2q-1}{q} \big(\wt(Y_t^{(i)},t+1)  \. - \. \wt(Y_t^{(i)},t)\big),
	\end{split}
	\end{equation}
	where $i$ is the element in $\{1,\ldots,n\}$ such that $i \equiv t+1 \text{ mod } n$.
	 
	We now show that \ $\Eb[M_{t+1}\mid \Fc_t]=M_t$.
	We will restrict to the event   \. $\zeta_t(Y_t^{(i)})=\Hor$\., 
	as the proof for   the event \. $\zeta_t(Y_t^{(i)})=\Ver$ \. is identical.
	Since \. $\zeta_t(Y_t^{(i)})=\Hor$\., it follows from the dynamic of $\Hor$--$\Ver$ walks 
	that 
	\begin{equation*}
	\begin{split}
	\Eb[a(Y_{t+1}^{(i)}) \mid \Fc_t]
	 \ = & \   q\. \frac{a\big(Y_t^{(i)} +(0,1)\big)  \. + \. a\big(Y_t^{(i)} -(0,1)\big)}{2} \. + \. (1-q)\frac{a\big(Y_t^{(i)}+(1,0)\big) + a\big(Y_t^{(i)}-(1,0)\big)}{2}\\
	 \ = & \  a(Y_t^{(i)})  \. + \.  2q \ft_{\Hor}(Y_t^{(i)}) \. + \.  2(1-q) \ft_{\Ver}(Y_t^{(i)})\.,
	\end{split}
	\end{equation*}
	where the first equality is due to the transition rule of $\Hor$--$\Ver$ walks, and the second equality is due to the definition of $\ft_{\Hor}$ and $\ft_{\Ver}$.
	This is equivalent to 
	\begin{equation}\label{equation: martingale 2}
		\Eb[a(Y_{t+1}^{(i)})-a(Y_t^{(i)}) \mid \Fc_t]    \quad = \quad 2q \ft_{\Hor}(Y_t^{(i)}) \. + \.  2(1-q) \ft_{\Ver}(Y_t^{(i)})\.. 
	\end{equation}
	On the other hand, since  \. $\zeta_t(Y_t^{(i)})=\Hor$\.,
	\begin{equation}\label{equation: martingale 3}
	\begin{split}
	\Eb[\wt(Y_{t}^{(i)},t+1)-\wt(Y_{t},t) \mid \Fc_t]
	\quad =& \quad  q \ft_{\Ver}(Y_t^{(i)})  \. + \. (1-q)\ft_{\Hor}(Y_t^{(i)}) \ - \ \ft_{\Hor}(Y_t^{(i)}) \\ 
	\quad =& \quad  q \ft_{\Ver}(Y_t^{(i)})  \. - \. q\ft_{\Hor}(Y_t^{(i)})\..
	\end{split}
	\end{equation}
	where the first equality is due to the transition rule of $\Hor$--$\Ver$ walks.
	
	Plugging \eqref{equation: martingale 2} and  \eqref{equation: martingale 3} into \eqref{equation: martingale 1}, we get 
	\begin{align*}
		\Eb[M_{t+1}-M_t\mid \Fc_t] \quad & = \quad \ft_{\Hor}(Y_t^{(i)}) \. + \. \ft_{\Ver}(Y_t^{(i)}) \. - \. \mathbbm{1}\{ Y_t^{(i)}=(0,0) \}  
		\quad =_{\eqref{equation: weight harmonic}} \quad 0\.. 
	\end{align*}
	This completes the proof.
	%
	%
	%
	%
\end{proof}

\medskip

\subsection{Lower bound for the return probability}
Let $k\geq 0$.
We denote by \. $\tau_{k,r}(\rho)$ \. the first time (for the frozen $\Hor$--$\Ver$ walk)  to either, return to the origin  $k$ times, or,  have all walkers frozen at $\partial B_r$,
\begin{equation}\label{eqtau}
 \tau_{k,r}(\rho) := \min \. \{\.  t \geq 0 \. \mid \.   R_{t,r}=k \quad \text{ or } \quad  |Y_t^{(i)}|\in \partial B_r  \ \text{ for all } i \in \{1,\ldots,n\} \.  \}\., 
\end{equation}
Note that $\tau_{k,r}(\rho)$  is a stopping time  of the filtration $(\Fc_t)_{t\geq 0}$ by definition.
Also note that $\tau_{k,r}(\rho)<\infty$ a.s. by the following argument: 
Suppose  that the walker never reaches $\partial B_r$ (as otherwise we are done).
Then  some vertex  $x \in B_r$ is visited infinitely many times throughout the entirety of the walk.
This implies that every vertex in $B_r$ is visited infinitely many times a.s.. In particular, the origin is visited more than $k$ times a.s., which implies that $\tau_{k,r}(\rho)<\infty$ a.s..

The main result of this subsection is the following lemma.

\smallskip

\begin{lemma}\label{lemreturn probability lower bound}
	There exists \. $C >0$ \. such that the following inequality always hold:
	\begin{equation*}
		\begin{split}
		p_{k,r}(\rho) \quad \geq \quad  1  \ - \  \frac{\pi}{2\ln r}   \left( \frac{2q-1}{nq} \ts \sum_{x \in B_{r+1}}  \big( \wt(x,0)  \ - \  \Eb\big[\wt\big(x,\tau_{k,r}(\rho)\big)\big] \big)  \ + \  \frac{k}{n}  \ + \  C \right)\.\..
		\end{split}
	\end{equation*}
\end{lemma}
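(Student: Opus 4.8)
The plan is to apply the optional stopping theorem to the martingale $(M_t)_{t\ge 0}$ at the stopping time $\tau:=\tau_{k,r}(\rho)$, which is a.s. finite and bounded by the escape/return structure inside $B_r$. Since $M_{t\wedge \tau}$ is a bounded martingale (the potential kernel $a$ is bounded on $B_{r+1}$, $N_{t\wedge\tau}$ and $R_{t\wedge\tau,r}$ are bounded by a function of $k$ and $r$, and the weight sum has finitely many terms each bounded in absolute value), optional stopping gives $\Eb[M_\tau]=\Eb[M_0]=0$. Unpacking the definition of $M_\tau$, this reads
\begin{equation*}
\Eb\Big[\sum_{i=1}^n a(Y_\tau^{(i)})\Big] \ - \ \Eb[N_\tau] \ + \ \frac{2q-1}{q}\sum_{x\in B_{r+1}}\big(\Eb[\wt(x,\tau)]-\wt(x,0)\big) \ = \ 0,
\end{equation*}
where I use that the set $\bigcup_i\{Y_0^{(i)},\dots,Y_\tau^{(i)}\}$ is contained in $B_{r+1}$ (walkers are frozen once they hit $\partial B_r$, and one further step from $B_r$ lands in $B_{r+1}$), and weights of unvisited vertices do not change.

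Next I would extract the return probability. On the event $\{R_{\tau,r}\ge k\}$ (equivalently $\{R_{\tau,r}=k\}$) we have by definition $p_{k,r}(\rho)=\Pb[R_{\tau,r}=k]$; on the complementary event all walkers are frozen at $\partial B_r$. The key quantitative input is a comparison between $N_\tau$ (departures from the origin) and $R_{\tau,r}$ (returns to the origin): these differ by at most a constant per walker, so $N_\tau \le R_{\tau,r}+n \le k + n$ when the walk returns $k$ times, while on the frozen event $N_\tau$ is controlled by the number of walkers reaching $\partial B_r$, hence $\Eb[N_\tau]\le \Eb[R_{\tau,r}] + Cn$ for an absolute constant $C$; also $\Eb[R_{\tau,r}] \le k$ trivially since the walk stops at the $k$-th return. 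Rearranging the optional-stopping identity yields
\begin{equation*}
\Eb\Big[\sum_{i=1}^n a(Y_\tau^{(i)})\Big] \ = \ \Eb[N_\tau] \ + \ \frac{2q-1}{q}\sum_{x\in B_{r+1}}\big(\wt(x,0)-\Eb[\wt(x,\tau)]\big).
\end{equation*}
The left side is where the probability enters: $a$ vanishes at the origin and grows like $\frac{2}{\pi}\ln|x| + O(1)$, so $a(Y_\tau^{(i)})$ is $O(1)$ when the $i$-th walker is at the origin and is $\frac{2}{\pi}\ln r + O(1)$ when it is frozen on $\partial B_r$; since on the non-return event \emph{all} $n$ walkers are frozen, $\Eb[\sum_i a(Y_\tau^{(i)})] \ge n\big(\tfrac{2}{\pi}\ln r - C\big)\,(1-p_{k,r}(\rho))$. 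Combining this lower bound for the left side with the upper bounds for the right side, dividing through by $n\cdot\frac{2}{\pi}\ln r$, and absorbing constants gives exactly the claimed inequality (the factor $\frac{\pi}{2\ln r}$ is the reciprocal of $\frac{2}{\pi}\ln r$, and the $\frac{k}{n}$ term comes from $\Eb[N_\tau]\le k+Cn$).

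The main obstacle is making the two-sided control of $a(Y_\tau^{(i)})$ precise: one needs the standard asymptotics $a(x) = \tfrac{2}{\pi}\ln|x| + \kappa + O(|x|^{-2})$ to say that frozen walkers on $\partial B_r$ contribute at least $\tfrac{2}{\pi}\ln r - C$ and that walkers not frozen (hence in $B_r$, and for the return event at the origin) contribute at least $-C$, uniformly in $r$; this requires knowing $a\ge -C$ everywhere, which holds since $a\ge 0$, so in fact the easy direction. The genuinely delicate point is the bound $\Eb[N_\tau]\le \Eb[R_{\tau,r}]+Cn$, i.e. that each walker's departures from the origin exceed its returns by at most a bounded amount even accounting for the walker starting at the origin and possibly being frozen mid-excursion — this is a deterministic per-trajectory bookkeeping argument (departures and returns alternate along each walker's path, with a discrepancy of at most $1$ from the initial placement at the origin plus at most $1$ from freezing), summed over the $n$ walkers. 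Once these two estimates are in hand, the inequality follows by the arithmetic rearrangement sketched above; I would also remark that the $\sum_{x\in B_{r+1}}$ in the statement can be taken over $B_{r+1}$ rather than the visited set since unvisited vertices contribute zero to $\wt(x,0)-\Eb[\wt(x,\tau)]$.
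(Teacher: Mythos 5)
Your proposal is correct and follows essentially the same route as the paper: optional stopping applied to the bounded stopped martingale $M_{t\wedge\tau_{k,r}}$, the bound $\Eb[N_{\tau}]\le k+n$ from the departures/returns bookkeeping, nonnegativity of $a$ on the return event, and the potential-kernel asymptotics $a\ge \frac{2}{\pi}\ln r - C$ on $\partial B_r$ for the all-frozen event, followed by the same rearrangement. No gaps worth noting.
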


\smallskip

We now build toward the proof of Lemma~\ref{lemreturn probability lower bound}.
Our first ingredient is the following asymptotic estimate of the potential kernel.
For every $x \in \Zb^2$, the \emph{argument} $\arg(x)$ of $x$ is the unique real number in  \ts $(-\pi, \pi]$ \ts such that
\[  \frac{x}{|x|} \ = \ \big(\cos (\arg(x)), \sin(\arg(x))\big)\..  \]

\smallskip

\begin{theorem}[{\cite[Theorem~2]{FU}}]\label{thmpotential kernel} 
	For every $x \in \Zb^2\setminus \{(0,0)\}$,
	\[  a(x) \quad = \quad  \frac{2}{\pi}\ln |x| \ + \  \lambda \ -\ \frac{\cos(4\arg(x))}{6\pi|x|^2} \ + \   O({|x|^{-4}}),  \]
	where \. $\lambda \ := \ \frac{2}{\pi}\gamma+\frac{1}{\pi} \log 8$\.,
	with $\gamma$ being the Euler-Mascheroni constant. \qed
\end{theorem}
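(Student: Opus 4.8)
The plan is to prove the expansion by lattice Fourier analysis, the approach of \cite{FU}. Let $\phi(\theta):=\tfrac12(\cos\theta_1+\cos\theta_2)$ be the characteristic function of one simple-random-walk step, so $p_i(x)=\tfrac1{(2\pi)^2}\int_{[-\pi,\pi]^2}\phi(\theta)^i e^{-ix\cdot\theta}\,d\theta$. Summing the geometric series — legitimate because the factor $1-\cos(x\cdot\theta)$, vanishing to second order at $\theta=0$, cancels the only singularity of $(1-\phi)^{-1}$, where $1-\phi(\theta)\sim|\theta|^2/4$ — yields the standard representation
\begin{equation*}
 a(x)\;=\;\frac1{(2\pi)^2}\int_{[-\pi,\pi]^2}\frac{1-\cos(x\cdot\theta)}{1-\phi(\theta)}\;d\theta .
\end{equation*}
The substantive step is to peel off the singular part of $(1-\phi)^{-1}$. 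From the Taylor expansion $1-\phi(\theta)=\tfrac{|\theta|^2}{4}-\tfrac{\theta_1^4+\theta_2^4}{48}+O(|\theta|^6)$ one obtains, near the origin,
\begin{equation*}
 \frac1{1-\phi(\theta)}\;=\;\frac4{|\theta|^2}\;+\;\frac{\theta_1^4+\theta_2^4}{3|\theta|^4}\;+\;\psi(\theta),\qquad\frac{\theta_1^4+\theta_2^4}{3|\theta|^4}=\frac14+\frac1{12}\cos(4\arg\theta),
\end{equation*}
where $\psi$ is bounded on $[-\pi,\pi]^2$, smooth off $\theta=0$, and $O(|\theta|^2)$ at it (and can be made $C^k$ by subtracting finitely many more homogeneous terms). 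This splits $a=I_1+I_2+I_3$. In $I_1$ the weight is homogeneous of degree $-2$, and the classical Bessel identity $\int_0^T\tfrac{1-J_0(u)}{u}\,du=\log T+\gamma-\log2+o(1)$ produces the term $\tfrac2\pi\log|x|$ together with a constant; in $I_2$ the weight $\tfrac14+\tfrac1{12}\cos(4\arg\theta)$ is homogeneous of degree $0$, contributing another constant (from the $\tfrac14$) and, because $\cos(4\arg\theta)$ is bounded but discontinuous at $\theta=0$ and the Fourier transform of a degree-$0$ function has degree $-2$, a term of order $|x|^{-2}$ proportional to $\cos(4\arg x)$; in $I_3$ the weight $(1-\cos(x\cdot\theta))\psi(\theta)$ is $C^k$, so repeated integration by parts gives a constant plus $O(|x|^{-k})$. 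Collecting the three constant contributions and evaluating the resulting explicit integrals (including the boundary contributions of the periodicity cell $[-\pi,\pi]^2$) identifies the additive constant as $\lambda=\tfrac2\pi\gamma+\tfrac1\pi\log8$, the several $\cos(4\arg x)/|x|^2$ contributions combine to coefficient $-\tfrac1{6\pi}$, and all remaining terms are $O(|x|^{-4})$.

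A quick check on the form and value of the anisotropic coefficient — and the basis of an alternative route — comes from the lattice-versus-continuum expansion of a second difference: $\tfrac14\sum_{y\in\Ngh(x)}f(y)-f(x)=\tfrac14\Delta f(x)+\tfrac1{48}(\partial_1^4+\partial_2^4)f(x)+(\text{sixth-order terms})$. Applying this to $f=\tfrac2\pi\log|\cdot|$, which is harmonic and satisfies $(\partial_1^4+\partial_2^4)\log|x|=-12\cos(4\arg x)/|x|^4$, and using \eqref{equation: potential kernel harmonic}, one finds for $h:=a-\tfrac2\pi\log|\cdot|-\lambda$ (known to be $O(|x|^{-2})$, e.g.\ by \cite{Law}) that $\tfrac14\sum_{y\in\Ngh(x)}h(y)-h(x)=\tfrac1{2\pi}\cos(4\arg x)/|x|^4+O(|x|^{-6})$. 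Since $\Delta(r^{-2}\cos4\theta)=-12\,r^{-4}\cos4\theta$, the correction $-\tfrac1{6\pi}\cos(4\arg x)/|x|^2$ is exactly what cancels this source to leading order, which pins the coefficient; what remains, $\widetilde h:=h+\tfrac1{6\pi}\cos(4\arg x)/|x|^2$, then satisfies $\tfrac14\sum_{y}\widetilde h(y)-\widetilde h(x)=O(|x|^{-6})$ and $\widetilde h=O(|x|^{-2})$, and one wants to upgrade this to $\widetilde h=O(|x|^{-4})$ using the dihedral symmetry of $a$ (which forbids all angular modes $\cos(4k+2)\theta$, so the slowest surviving non-constant mode decays like $|x|^{-4}$).

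The main obstacle, in either route, is the sharp error control needed to isolate the $|x|^{-2}$ anisotropy with the exact constant $-\tfrac1{6\pi}$ and remainder genuinely $O(|x|^{-4})$. In the Fourier route, the $\log|x|$, the constant $\lambda$, and the anisotropic term each receive contributions from the interior singularity at $\theta=0$ and from the boundary geometry of $[-\pi,\pi]^2$; these, and any spurious intermediate-order oscillatory pieces that must cancel, have to be computed simultaneously. In the bootstrap route the obstacle is the a priori exterior estimate for $\widetilde h$: since $|x|^{-\alpha}$ is subharmonic rather than superharmonic for the discrete Laplacian, a direct maximum-principle majorization fails, and one must argue through discrete Green's-function asymptotics for $\{|x|>R\}$ (as in \cite{Law}) combined with the angular-mode decomposition. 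Everything else — the integral representation, the geometric-series interchange, the Taylor and homogeneous expansions, and the $C^k$-reduction of the smooth remainder — is routine.
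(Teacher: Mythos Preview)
The paper does not prove this statement at all: Theorem~\ref{thmpotential kernel} is quoted directly from \cite[Theorem~2]{FU} and closed with a \qed, with no argument given. It is used purely as a black-box input to the later estimates (Lemmas~\ref{lemreturn probability lower bound}, \ref{lemweight function asymptotic}, \ref{lemweight function sum}). So there is no ``paper's own proof'' to compare your proposal against.

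Your sketch follows the standard Fourier-analytic route of Fukai--Uchiyama, and the outline is correct in spirit: the integral representation, the Taylor expansion of $(1-\phi)^{-1}$ isolating the $4/|\theta|^2$ singularity and the degree-$0$ anisotropic piece, and the smooth remainder handled by integration by parts. Your alternative bootstrap via the discrete-vs-continuous Laplacian is also a legitimate heuristic for pinning the coefficient $-1/(6\pi)$. You are honest about the genuine work that remains---tracking the boundary contributions of the square $[-\pi,\pi]^2$ to the constant $\lambda$, and the a~priori exterior estimate needed to close the bootstrap---and those are indeed the nontrivial parts of \cite{FU}. For the purposes of this paper, however, none of that is required: the result is simply cited.
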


\smallskip

The second ingredient is the following version of the optional stopping theorem.

\smallskip

\begin{theorem}[Optional stopping theorem, see~{\cite[Theorem~10.10(ii)]{Wil91}}]\label{theorem: optional stopping}
	Let $(M_t)_{t\geq 0}$ be a martingale and let $\tau$ be a stopping time.
	If $(M_t)_{t\geq 0}$ is bounded and $\tau$ is a.s. finite,
	then $\Eb[M_{\tau}]= \Eb[M_0]$. \qed
\end{theorem}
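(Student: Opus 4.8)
The plan is to reduce the statement to the elementary fact that a stopped martingale is a martingale, and then to pass to the limit using the boundedness hypothesis. Let $(\Fc_t)_{t\geq 0}$ denote the filtration with respect to which $(M_t)_{t\geq 0}$ is a martingale, and set $K := \sup_{t\geq 0}\|M_t\|_{\infty}$, which is finite by assumption.

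First I would show that the stopped process $(M_{t\wedge \tau})_{t\geq 0}$ is again a martingale with respect to $(\Fc_t)_{t\geq 0}$. The key observation is the telescoping identity
\[ M_{t\wedge \tau} \ = \ M_0 \ + \ \sum_{s=1}^{t} \mathbbm{1}\{\tau \geq s\}\,\big(M_s - M_{s-1}\big), \]
together with the fact that $\{\tau \geq s\} = \{\tau \leq s-1\}^{\mathsf c} \in \Fc_{s-1}$, so that each summand is an $\Fc_{s-1}$-measurable indicator times a martingale increment and hence has vanishing conditional expectation given $\Fc_{s-1}$. This makes $(M_{t\wedge\tau})_{t\geq 0}$ a martingale transform of $(M_t)_{t\geq 0}$, so in particular $\Eb[M_{t\wedge\tau}] = \Eb[M_0]$ for every $t\geq 0$.

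Next I would let $t\to\infty$. Since $\tau < \infty$ almost surely, for $\Pb$-almost every $\omega$ there is a (random) time $t_0(\omega)$ with $t\wedge\tau(\omega) = \tau(\omega)$ for all $t\geq t_0(\omega)$, so $M_{t\wedge\tau} \to M_\tau$ a.s. Because $|M_{t\wedge\tau}| \leq K$ uniformly in $t$ and $\omega$, the bounded convergence theorem gives $\Eb[M_{t\wedge\tau}] \to \Eb[M_\tau]$, and combining this with the previous paragraph yields $\Eb[M_\tau] = \lim_{t\to\infty}\Eb[M_{t\wedge\tau}] = \Eb[M_0]$. The only point requiring care is this last interchange of limit and expectation, which is exactly where the boundedness of $(M_t)_{t\geq 0}$ is used; without it the conclusion can fail, as the simple random walk on $\Zb$ stopped upon first reaching $1$ shows. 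Everything else is routine.
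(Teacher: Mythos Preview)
Your proof is correct and is the standard argument: the stopped process is a martingale via the predictable transform, and boundedness plus a.s.\ finiteness of $\tau$ lets you pass to the limit by bounded convergence. The paper does not supply its own proof of this theorem; it simply cites \cite[Theorem~10.10(ii)]{Wil91} and marks the statement with \qed, so there is nothing further to compare against.
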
 

\smallskip

We now present the proof of Lemma~\ref{lemreturn probability lower bound}.

\smallskip

\begin{proof}[Proof of Lemma~\ref{lemreturn probability lower bound}]
	We fix $k \geq 0$ and the initial rotor configuration $\rho$ throughout this proof,
	and only $r$ is allowed to vary.
	We write \. $p_r:= p_{k,r}(\rho)$ \. and \. $\tau_r:=\tau_{k,r}(\rho)$\..
	
	Let $t \geq 0$, and let  \. $T_r:= t \wedge \tau_{r}$\..
	Note that, for the first $T_r$ steps of the walk,
	the walkers never leave $B_{r+1}$, and the total number of departures from the origin never exceeds $k+n$.
	Also note that $T_r$ is finite a.s. since $\tau_r$ is finite a.s..
	It then follows from definition of $M_t$ that, 
	\begin{align*}
		\big|M_{T_r}\big| \quad &\leq \quad   \sum_{i=1}^n \big|a(Y^{(i)}_{T_r})\big| \  + \  N_{T_r}  \ +  \  \frac{|2q-1|}{q}\sum_{x \in \bigcup_{i=1}^n \{Y_0^{(i)},\ldots, Y_{T_r}^{(i)}\}} |\wt(x,T_r)| \. + \. |\wt(x,0)|\\
		&\leq \quad   n \max_{x \in B_{r+1}}  |a(x) |   \  + \  k+n  \ +  \  \frac{|2q-1|}{q}\sum_{x \in B_{r+1} }  2 \max \{\big|\ft_{\Hor}(x)\big|,\big|\ft_{\Ver}(x)\big| \}\.,
	\end{align*}
	and note that the right side is bounded from above by a constant \. $c:=c(k,r) >0 $ \. that depends only on $k$ and $r$.
	Thus the conditions in Theorem~\ref{theorem: optional stopping} are satisfied,   and we have 
	\begin{align*}
	\Eb[M_{\tau_{r}}] \ = \ \lim_{t \to \infty}  \Eb[M_{T_{r}}] \ = \  \Eb[M_0] \ = \ 0.
	\end{align*}
	This implies that
	\begin{align}
	 \sum_{i=1}^n \Eb\big[a(Y^{(i)}_{\tau_{r}})\big] \quad &= \quad \Eb\big[N_{\tau_{r}}\big]  \ + \  \frac{2q-1}{q} \sum_{x  \in B_{r+1}} \big( \wt(x,0) \ - \ \Eb\big[\wt(x,\tau_r) \big]\big) \notag \\
	\quad & \leq  \quad k+n  \ + \ \frac{2q-1}{q}  \sum_{x  \in B_{r+1}} \big( \wt(x,0) \ - \ \Eb\big[\wt(x,\tau_r) \big]\big). \label{eqoptional stopping 1}
	\end{align}
	
	On the other hand, for every $i \in \{1,\ldots,n\}$,
	\begin{align*}
		\Eb\big[a(Y^{(i)}_{\tau_{r}})\big] \quad = \quad  \Pb\big[R_{\tau_r}  = k \big] \. \Eb\big[a(Y^{(i)}_{\tau_{r}}) \. \mid \. R_{\tau_r}  = k \big] \ + \ \Pb\big[R_{\tau_r}  < k \big] \. \Eb\big[a(Y^{(i)}_{\tau_{r}}) \. \mid \. R_{\tau_r}  < k \big]\..
	\end{align*}
	Now note that,  by definition of $p_r$ and $\tau_r$,
	\[ \Pb\big [R_{\tau_r}  = k \big] \ = \ p_r; \qquad \Pb\big [R_{\tau_r}  < k \big] \ = \ 1- p_r\.,  \]
	which gives us  
		\begin{align*}
	\Eb\big[a(Y^{(i)}_{\tau_{r}})\big] \quad = \quad  p_r \. \Eb\big[a(Y^{(i)}_{\tau_{r}}) \. \mid \. R_{\tau_r}  = k \big] \ + \ (1-p_r)  \. \Eb\big[a(Y^{(i)}_{\tau_{r}}) \. \mid \. R_{\tau_r}  < k \big]\..
	\end{align*}
	Now note that the potential kernel $a$ is a nonnegative function.
	Also note that, on the event \. $R_{\tau_r}  < k$\., 
	we have \. $Y^{(i)}_{\tau_{r}} \in \partial B_r$ \. for all $i \in \{1,\ldots,n\}$ by the definition of $\tau_r$.
	This implies that
	\begin{equation}\label{eqoptional stopping 2}
	\begin{split}
	\Eb\big[a(Y^{(i)}_{\tau_{r}})\big] \quad & \geq  \quad   (1-p_r)  \. \Eb\big[a(Y^{(i)}_{\tau_{r}}) \. \mid \. R_{\tau_r}  < k \big] \\
	\quad & \geq_{\text{Thm~\ref{thmpotential kernel}}} \quad (1-p_r) \. \frac{2 \ln r}{\pi}  \ - \ C'\.,
	\end{split}
	\end{equation}
	for some absolute constant $C'>0$.
	Plugging \eqref{eqoptional stopping 2} into \eqref{eqoptional stopping 1}, we get
	\begin{align*}
	(1-p_r) \. \frac{2 n\ln r}{\pi} \quad \leq \quad k +n \ + \  \frac{2q-1}{q} \sum_{x  \in B_{r+1}} \big( \wt(x,0) \ - \ \Eb\big[\wt(x,\tau_r) \big]\big)   \ + \ C'n \.,
	\end{align*}
	which is equivalent to 
	\[ p_r \quad \geq \quad 1 \ - \ \frac{k\pi}{2n\ln r}  \ - \ \frac{\pi}{2n\ln r} \ - \ \frac{\pi}{2n \ln r}\. \frac{2q-1}{q} \sum_{x  \in B_{r+1}} \big( \wt(x,0) \ - \ \Eb\big[\wt(x,\tau_r) \big]\big)   \ - \  \frac{C'\pi}{2\ln r}\..  \]
	The lemma now follows by taking 
	 $C:=C'+1$, as desired.
\end{proof}

\medskip

\subsection{Asymptotics of the weight function}
The following asymptotic estimates of the weight functions will be used in coming sections.

\smallskip

\begin{lemma}\label{lemweight function asymptotic}
	For every  $x \in \Zb^2 \setminus \{(0,0)\}$,
	\begin{align*}
	\ft_{\Hor}(x) \ = \ &  \frac{\cos(2\arg(x))}{2\pi|x|^2} \ + \ O(|x|^{-4});  \qquad 
		\ft_{\Ver}(x) \ = \   - \frac{\cos(2\arg(x))}{2\pi|x|^2} \ + \ O(|x|^{-4})\..
	\end{align*} 
\end{lemma}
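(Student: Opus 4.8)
The plan is to obtain the asymptotics of $\ft_{\Hor}$ and $\ft_{\Ver}$ directly from the asymptotic expansion of the potential kernel in Theorem~\ref{thmpotential kernel}. Recall that $\ft_{\Hor}(x)$ is (one quarter of) the discrete second difference of $a$ in the vertical direction, and $\ft_{\Ver}(x)$ is (one quarter of) the discrete second difference in the horizontal direction. Since $a(x) = \frac{2}{\pi}\ln|x| + \lambda - \frac{\cos(4\arg(x))}{6\pi|x|^2} + O(|x|^{-4})$, the constant $\lambda$ contributes nothing to any second difference, and the two nontrivial contributions come from the logarithmic term and the $|x|^{-2}$ term. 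So first I would write $\ft_{\Hor}(x) = \frac{1}{4}\big(g(x+(0,1)) + g(x-(0,1)) - 2g(x)\big)$ where $g$ collects the $\ln|x|$ and $\cos(4\arg(x))/|x|^2$ pieces, treat $g$ as (the restriction to $\Zb^2$ of) a smooth function on $\Rb^2\setminus\{0\}$, and Taylor expand.

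The key computation is then a Taylor expansion of a smooth function to second order: for a $C^\infty$ function $h$ away from the origin, $h(x+e) + h(x-e) - 2h(x) = \partial_e^2 h(x) + O(\sup |\nabla^4 h|)$ where the error is controlled on the segment from $x-e$ to $x+e$. With $e = (0,1)$ this yields $\ft_{\Hor}(x) = \tfrac14 \partial_{yy} g(x) + (\text{error})$, and similarly $\ft_{\Ver}(x) = \tfrac14 \partial_{xx} g(x) + (\text{error})$. For the logarithmic part, write $\phi(x) := \frac{2}{\pi}\ln|x|$; a direct calculation gives $\partial_{yy}\phi(x) = \frac{2}{\pi}\cdot\frac{x_1^2 - x_2^2}{|x|^4} = \frac{2}{\pi}\cdot\frac{\cos(2\arg(x))}{|x|^2}$ and $\partial_{xx}\phi(x) = -\frac{2}{\pi}\cdot\frac{\cos(2\arg(x))}{|x|^2}$ (consistent with $\phi$ being harmonic, so $\partial_{xx}\phi + \partial_{yy}\phi = 0$ away from the origin, matching~\eqref{equation: weight harmonic} away from $(0,0)$). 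Dividing by $4$ gives the claimed main terms $\pm\frac{\cos(2\arg(x))}{2\pi|x|^2}$. The $\cos(4\arg(x))/|x|^2$ term of $a$ is itself $O(|x|^{-2})$, but its second derivatives are $O(|x|^{-4})$, so after dividing by $4$ it is absorbed into the $O(|x|^{-4})$ error; and the remaining $O(|x|^{-4})$ term of $a$, being smooth with derivatives of order $|x|^{-4}$ as well (or handled by taking the expansion of Theorem~\ref{thmpotential kernel} to one more order if one wants to be careful about differentiating an $O$-term), contributes $O(|x|^{-4})$ to the second difference. The discretization error $O(\sup|\nabla^4 g|)$ over the unit segment is likewise $O(|x|^{-4})$ for $|x|$ large.

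The only mildly delicate point — and the step I expect to require the most care — is justifying that one may differentiate the $O(|x|^{-4})$ remainder in Theorem~\ref{thmpotential kernel}, i.e. that the discrete second difference of an $O(|x|^{-4})$ term is again $O(|x|^{-4})$ rather than merely $O(|x|^{-2})$ as a crude bound would give. This is resolved by not differentiating the $O$-term at all: instead, invoke Theorem~\ref{thmpotential kernel} at the higher order of expansion (the Fourier–uniform expansion of the $\Zb^2$ potential kernel has an asymptotic series in inverse even powers of $|x|$, so the next term is $O(|x|^{-4})$ and beyond), so that the remainder after the $\ln$ and $|x|^{-2}$ terms is genuinely $O(|x|^{-4})$ together with controlled derivatives; alternatively, since each point $x \pm (0,1), x \pm (1,0)$ lies within distance $1$ of $x$, and on that neighborhood the remainder is uniformly $O(|x|^{-4})$, the difference of four such quantities is still $O(|x|^{-4})$. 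Either way, assembling the logarithmic main term with these error bounds gives exactly the two displayed asymptotics, completing the proof.
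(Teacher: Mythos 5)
Your proposal is correct and follows essentially the same route as the paper: both start from the expansion in Theorem~\ref{thmpotential kernel}, extract the second difference of the logarithmic term as the leading contribution, bound the second difference of the $\cos(4\arg(x))/(6\pi|x|^2)$ term by its derivatives (the paper via the mean value theorem, you via a second-derivative bound), and handle the $O(|x|^{-4})$ remainder exactly as you describe, by summing the uniform bounds at $x$ and $x\pm e$ rather than differentiating the error term. The only cosmetic difference is in the main term: the paper evaluates $\ln|z+1|+\ln|z-1|-2\ln|z|=\ln|1-1/z^2|$ exactly and expands in $1/z^2$, whereas you replace the discrete second difference by $\tfrac14\partial_{yy}$ (resp.\ $\tfrac14\partial_{xx}$) of $\tfrac{2}{\pi}\ln|x|$ plus a fourth-derivative discretization error, which yields the same $\pm\cos(2\arg(x))/(2\pi|x|^2)$ main term and $O(|x|^{-4})$ error.
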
 

\smallskip

\begin{proof}
	We present only the proof of the asymptotics of $\ft_{\Ver}(x)$,
	as the proof for $\ft_{\Hor}(x)$ is identical.
	Let $z \in \mathbb{C}$ be the complex number \. $z:= b \. + \. c \ts i$\., where $(b,c):=x$.
	We then have 
	\begin{equation*}
	\begin{split}	
	& \ft_{\Ver}(x)  \  = \    \frac{a\big(x+(1,0)\big) \. + \. a\big(x-(1,0)\big)-2a(x)}{4}  \\
	\ & =_{\text{Thm~\ref{thmpotential kernel}}} \  \frac{1}{2\pi}\left( {\ln|z+1|+\ln|z-1|- 2\ln|z|}\right) \\
	 \ & \phantom{= - } \ - \  \frac{1}{24\pi}\left(\frac{\cos (4\arg(z+1))}{|z+1|^2}+ \frac{\cos (4\arg(z-1))}{|z-1|^2}-2 \frac{\cos (4\arg(z))}{|z|^2} \right)     \ + \ O(|x|^{-4}).
	 \end{split}
	\end{equation*} 

	Now note that 
	\begin{align*}
	 & {\ln|z+1|+\ln|z-1|- 2\ln|z|}  \quad = \quad   \ln \left|1-\frac{1}{z^2}\right| 
	 \quad = \quad \frac{1}{2} \ln \left(1-\frac{1}{z^2}\right)  \ + \ \frac{1}{2} \ln \left(1-\frac{1}{\overline{z}^2}\right)\\
	 & = \quad - \ \frac{1}{2z^2} \ - \ \frac{1}{2\overline{z}^2}  \ + \ O(|z|^{-4}) \quad = \quad - \. \frac{z^2+\overline{z}^2}{2|z|^4} \ + \ O(|z|^{-4}) 
	 \quad = \quad -\frac{\cos(2\arg(x))}{|x|^2} \ + \ O(|x|^{-4}).
	\end{align*}
	Also note that, by applying the mean value theorem to the function \. $g(z) =  \frac{\cos (4\arg(z))}{|z|^2}$\.,
	\[\frac{\cos (4\arg(z+1))}{|z+1|^2}+ \frac{\cos (4\arg(z-1))}{|z-1|^2}-2 \frac{\cos (4\arg(z))}{|z|^2}  \quad = \quad O(|z|^{-4}) \quad = \quad  O(|x|^{-4})\..  \]
	This lemma now follows.
\end{proof}

\smallskip

In particular, the following consequence of Lemma~\ref{lemweight function asymptotic} will be used in the coming sections.
We write 
\begin{equation}\label{eqweight max definition}
   \ft(x) \quad := \quad \max \. \{ \big|f_{\Hor}(x)\big|, \big|f_{\Ver}(x)\big|\}\..   
\end{equation}

It follows from Lemma~\ref{lemweight function asymptotic}
that, for every $x \in \Zb^2 \setminus \{(0,0)\}$, 
\begin{equation}\label{eqweight max asymptotic}
 |\ft(x)|  \quad \leq \quad \frac{|\cos(2\arg(x))|}{2\pi|x|^2} \. + \. O(|x|^{-4})\..
\end{equation}

\smallskip

\begin{lemma}\label{lemweight function sum}
	For every $r \geq 1$,
	\[ \sum_{x \in B_{r+1}}  |\ft(x)| \quad =  \quad  \frac{2}{\pi} \ln r \ + \ O(1)\..   \]
\end{lemma}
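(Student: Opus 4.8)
The plan is to estimate the sum $\sum_{x \in B_{r+1}} |\ft(x)|$ by replacing $|\ft(x)|$ with its asymptotic expansion from \eqref{eqweight max asymptotic} and then comparing the resulting sum with an integral. First I would split off the finitely many terms with $|x| \leq R_0$ for some fixed constant $R_0$; these contribute only $O(1)$. For the remaining terms, \eqref{eqweight max asymptotic} gives $|\ft(x)| = \frac{|\cos(2\arg(x))|}{2\pi|x|^2} + O(|x|^{-4})$, and since $\sum_{x \in \Zb^2 \setminus \{0\}} |x|^{-4} < \infty$ the error terms contribute $O(1)$ in total. So the task reduces to showing
\[
 \sum_{R_0 < |x| < r+1} \frac{|\cos(2\arg(x))|}{2\pi|x|^2} \quad = \quad \frac{2}{\pi}\ln r \ + \ O(1)\..
\]

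Next I would approximate this sum by the integral $\frac{1}{2\pi}\int_{R_0 < |y| < r} \frac{|\cos(2\theta)|}{|y|^2}\,dy$ over the annulus in $\Rb^2$. Converting to polar coordinates $y = (s\cos\theta, s\sin\theta)$, the area element is $s\,ds\,d\theta$, so the integral becomes
\[
 \frac{1}{2\pi}\int_0^{2\pi} |\cos(2\theta)|\,d\theta \int_{R_0}^{r} \frac{ds}{s} \quad = \quad \frac{1}{2\pi} \cdot 4 \cdot (\ln r - \ln R_0) \quad = \quad \frac{2}{\pi}\ln r \ + \ O(1)\..
\]
Here I used $\int_0^{2\pi}|\cos(2\theta)|\,d\theta = 4$. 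The remaining point is to justify that the sum and the integral differ by $O(1)$: the function $y \mapsto \frac{|\cos(2\arg(y))|}{|y|^2}$ is (away from the origin) smooth except along the four rays where $\cos(2\arg(y))=0$, where it is merely Lipschitz; on the annulus $R_0 < |y| < r$ its gradient is $O(|y|^{-3})$ off those rays, so a standard comparison of a Riemann sum over unit cells with the integral yields an error $\sum_{|x| > R_0} O(|x|^{-3}) = O(1)$, with the contribution near the four singular rays also bounded by $O(1)$ since there the function itself is $O(|y|^{-2})$ over a region of bounded area per scale.

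The main obstacle I anticipate is making the sum-to-integral comparison rigorous with a genuinely bounded (not logarithmically growing) error, given the mild non-smoothness of $\arg$ along the axes: one must be a little careful that the cells straddling the singular rays do not accumulate a divergent error. This is handled by noting that in an annulus $\{2^j \leq |y| < 2^{j+1}\}$ the union of unit cells meeting a singular ray has area $O(2^j)$, on which $|\ft|$ is $O(2^{-2j})$, contributing $O(2^{-j})$ per scale and hence $O(1)$ summed over $j$; off the singular rays the usual mean value / gradient bound applies. Alternatively, one can avoid the issue entirely by invoking the asymptotics of the potential kernel directly: since $4\ft_{\Hor}(x) = a(x+(0,1)) + a(x-(0,1)) - 2a(x)$ and similarly for $\ft_{\Ver}$, one has $\ft_{\Hor}(x) + \ft_{\Ver}(x) = \mathbbm{1}\{x = 0\}$ by \eqref{equation: weight harmonic}, so $|\ft_{\Hor}(x)| + |\ft_{\Ver}(x)| = |\ft_{\Hor}(x) - \ft_{\Ver}(x)|$ for $x \neq 0$, and one estimates $\sum_{x \in B_{r+1}} |\ft_{\Hor}(x) - \ft_{\Ver}(x)|$; but this ultimately requires the same annular comparison, so I would proceed with the direct approach above.
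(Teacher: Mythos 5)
Your proposal is correct and follows essentially the same route as the paper: substitute the asymptotics of Lemma~\ref{lemweight function asymptotic} (equivalently \eqref{eqweight max asymptotic}), absorb the $O(|x|^{-4})$ tail into $O(1)$, and compare the remaining sum with the polar-coordinate integral $\frac{1}{2\pi}\int_0^{2\pi}\int_1^r \frac{|\cos(2\theta)|}{s}\,ds\,d\theta = \frac{2}{\pi}\ln r$. The only difference is that you spell out the Riemann-sum-to-integral error estimate (gradient bound off the rays $\cos(2\arg y)=0$, dyadic bound near them), which the paper leaves implicit in the phrase ``approximating the sum with the corresponding integral.''
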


\smallskip

\begin{proof}
	It follows from Lemma~\ref{lemweight function asymptotic} that 
	\begin{align*}
		\sum_{x \in B_{r+1}}  |\ft(x)| \quad = \quad  \ft(0,0) \ + \ \sum_{x \in B_{r+1}\setminus \{(0,0)\}} \frac{|\cos(2\arg(x))|}{2\pi|x|^2} \. + \. O(|x|^{-4})\..
	\end{align*}
	By approximating the sum in the right side with the corresponding integral in $\Rb^2$,  we get 
		\begin{align*}
	& \sum_{x \in B_{r+1}}  |\ft(x)|  \quad = \quad  \int_{1 \leq |z| \leq r} \frac{|\cos(2\arg(z))|}{2\pi|z|^2} \, dz  \ + \ O(1)\\
	 & = \quad \frac{1}{2\pi} \. \int_{0}^{2\pi} \int_{1}^{r} \frac{|\cos(2\theta)|}{s} \, ds \,  d\theta \ + \  O(1) \quad = \quad \frac{2}{\pi} \ln r \ + \ O(1)\.,
	\end{align*}
	as desired.
\end{proof}
\bigskip

\section{Proof of Theorem~\ref{thmrecurrence many walker}}\label{secproof of recurrnce many walkers}

We now build toward the proof of Theorem~\ref{thmrecurrence many walker}\..
We start with the following lower bound for the return probability $p_{k,r}(\rho)$ that, most importantly, does not depend on  $k$ or $\rho$.
Recall the definition of return probability $p_{k,r}(\rho)$ from \eqref{eqreturn probability}.

\smallskip

\begin{lemma}\label{lemrecurrence many walker lower bound}
 For every $k \geq 0$ and every rotor configuration $\rho$,
 \[  \lim_{r \to \infty} p_{k,r}(\rho)  \quad \geq \quad  1 \ - \ \frac{|4q-2|}{nq}\.. \]
\end{lemma}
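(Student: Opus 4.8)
The plan is to feed the estimate of Lemma~\ref{lemreturn probability lower bound} into the weight asymptotics of Lemma~\ref{lemweight function sum}, and then send $r \to \infty$. The only quantity appearing in the bound of Lemma~\ref{lemreturn probability lower bound} that could a priori depend on $k$ or on $\rho$ is the sum $\sum_{x \in B_{r+1}}\big(\wt(x,0) - \Eb[\wt(x,\tau_{k,r}(\rho))]\big)$, so the heart of the matter is to dominate this sum by a quantity depending on $r$ alone.

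First I would observe that, directly from \eqref{eqweight} and the definition \eqref{eqweight max definition} of $\ft$, one has the pointwise bound $|\wt(x,t)| \leq \ft(x)$ for every $x \in \Zb^2$ and every $t \geq 0$, with no dependence on the configuration $\zeta_t$. Hence, by the triangle inequality,
\begin{equation*}
\left|\, \sum_{x \in B_{r+1}} \big(\wt(x,0) - \Eb[\wt(x,\tau_{k,r}(\rho))]\big) \right| \ \leq \ \sum_{x \in B_{r+1}} \Big( |\wt(x,0)| + \Eb\big[|\wt(x,\tau_{k,r}(\rho))|\big] \Big) \ \leq \ 2 \sum_{x \in B_{r+1}} \ft(x),
\end{equation*}
and by Lemma~\ref{lemweight function sum} the right-hand side equals $\tfrac{4}{\pi}\ln r + O(1)$ — a bound uniform in $k$ and in $\rho$.

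Then I would substitute this into Lemma~\ref{lemreturn probability lower bound}. Using $q>0$ and the display above, $\tfrac{2q-1}{nq}\sum_{x}(\cdots) \leq \tfrac{|2q-1|}{nq}\big(\tfrac{4}{\pi}\ln r + O(1)\big)$, so that
\begin{equation*}
p_{k,r}(\rho) \ \geq \ 1 - \frac{\pi}{2\ln r}\left( \frac{|2q-1|}{nq}\Big(\frac{4}{\pi}\ln r + O(1)\Big) + \frac{k}{n} + C \right) \ = \ 1 - \frac{2|2q-1|}{nq} - O\big(\tfrac{1}{\ln r}\big),
\end{equation*}
where, with $k$ fixed, the $O(1/\ln r)$ error absorbs the contributions of $k$, of $C$, and of the $O(1)$ term. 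Since $2|2q-1| = |4q-2|$, letting $r \to \infty$ (the limit exists by the monotonicity in $r$ underlying the proof of Lemma~\ref{lemreturn probability limit}) gives $\lim_{r\to\infty} p_{k,r}(\rho) \geq 1 - \tfrac{|4q-2|}{nq}$, which is the claim.

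I do not anticipate a genuine obstacle here; the one point requiring care is precisely that the pointwise domination $|\wt(x,t)| \leq \ft(x)$ holds uniformly over $t$ and over the randomness of the frozen walk, which is what makes the final bound independent of $k$ and of $\rho$ — exactly the feature needed in order to later combine this lemma with the zero-one law (Proposition~\ref{proposition: zero-one law recurrence}) in the proof of Theorem~\ref{thmrecurrence many walker}.
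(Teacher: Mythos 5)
Your proposal is correct and follows essentially the same route as the paper: the pointwise bound $|\wt(x,t)|\leq \ft(x)$ (the paper's crude bound \eqref{eqweight upper bound crude 0}), the triangle inequality, Lemma~\ref{lemweight function sum} to get the $\tfrac{4}{\pi}\ln r + O(1)$ estimate, and substitution into Lemma~\ref{lemreturn probability lower bound} before letting $r\to\infty$, with the $k/n$ and $C$ terms absorbed into the vanishing $O(1/\ln r)$ error. No gaps.
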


\smallskip

\begin{proof}
	We have from Lemma~\ref{lemreturn probability lower bound} that
	\begin{align*}
	\lim_{r \to \infty} p_{k,r}(\rho)  \quad & \geq \quad 1  \ - \  \limsup_{r \to \infty} \frac{\pi}{2 n \ln r} \ts \frac{2q-1}{q} \ts \sum_{x \in B_{r+1}}  \big( \wt(x,0)  \ - \  \Eb\big[\wt\big(x,\tau_{k,r}(\rho)\big)\big] \big)\..
	\end{align*}
	On the other hand, we have by definition of the weight function that
	\begin{equation}\label{eqweight upper bound crude 0}
	 \big| \wt(x,0)  \big|  \ , \  \Eb\big[\wt\big(x,\tau_{k,r}(\rho)\big)\big] \quad \leq \quad \max\{ \big |\ft_{\Hor}(x)\big|, \big|\ft_{\Ver}(x) \big|  \} \quad = \quad |\ft(x)|\..  	\end{equation}
	Plugging \eqref{eqweight upper bound crude 0} into the inequality above,
	\begin{align*}
	\lim_{r \to \infty} p_{k,r}(\rho)  & \geq \quad 1  \ - \  \limsup_{r \to \infty} \frac{\pi}{2 n \ln r} \ts \frac{|2q-1|}{q} \ts \sum_{x \in B_{r+1}}  2 |\ft(x)|\\
	& \geq_{\text{Lem~\ref{lemweight function sum}}} \quad 1  \ - \  \limsup_{r \to \infty} \frac{\pi}{2 n \ln r} \frac{|2q-1|}{q} \left( \frac{4 \ln r}{\pi} \ + \ O(1) \right)\\
	& \geq \quad 1 \ - \ \frac{|4q-2|}{nq}\.,
	\end{align*}
	as desired.
\end{proof}

\smallskip

We now present the proof of Theorem~\ref{thmrecurrence many walker}.

\smallskip
\begin{proof}[Proof of Theorem~\ref{thmrecurrence many walker}]
	We have 
	\begin{align*}
		p_{\rec}(\rho) \quad &=_{\text{Lem~\ref{lemreturn probability limit}}} \quad 
		\lim_{k \to \infty}\lim_{r \to \infty} p_{k,r}(\rho) 
		 \quad \geq_{\text{Lem~\ref{lemrecurrence many walker lower bound}}} \quad     1 \ - \frac{|4q-2|}{nq}\..
	\end{align*}
	Since \. $n > \frac{|4q-2|}{q}$ \. by assumption,
	it then follows from the equation above that 
	\[  	p_{\rec}(\rho)  \quad > \quad 0\..  \]
	Together with Proposition~\ref{proposition: zero-one law recurrence}, this implies that \. $p_{\rec}(\rho) \. = \. 1$\.,
	and the proof is complete.
\end{proof}

\bigskip

\section{Proof of Theorem~\ref{thmrecurrence IID}, case $q<1$}\label{secproof of recurrence IID subcritical}

In this section we prove Theorem~\ref{thmrecurrence IID} for  $q<1$.

\smallskip
\begin{proposition}\label{proprecurrence IID subcritical}
	Let $q \in (\frac{1}{3},1)$.
	Then, for  the $\Hor$--$\Ver$ walk with a single walker,
	\[\text{$p_{\rec}(\rho) \ = \ 1$  \qquad  for almost every  \. $\rho$ sampled from $\IUD$\..} \]
\end{proposition}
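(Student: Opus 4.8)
The plan is to average the lower bound of Lemma~\ref{lemreturn probability lower bound} over the random initial rotor configuration, exploiting that under $\IUD$ the weights $\wt(x,0)$ are \emph{centered}. Writing $\bar p_{k,r} := \Eb_{\rho\sim\IUD}\big[p_{k,r}(\rho)\big]$ and taking the $\IUD$-expectation of Lemma~\ref{lemreturn probability lower bound} with $n=1$ (the right-hand side is affine in $\sum_{x\in B_{r+1}}(\wt(x,0)-\Eb[\wt(x,\tau_{k,r}(\rho))])$), one obtains
\[
\bar p_{k,r} \ \geq\ 1 \ - \ \frac{\pi}{2\ln r}\left( \frac{2q-1}{q}\, \Eb_{\rho\sim\IUD}\Big[\sum_{x\in B_{r+1}}\big(\wt(x,0) - \Eb[\wt(x,\tau_{k,r}(\rho))]\big)\Big] \ + \ k \ + \ C \right).
\]

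The first key input is that under $\IUD$ each vertex is labeled $\Hor$ or $\Ver$ with probability $\tfrac12$ each, independently, so that by \eqref{equation: weight harmonic}
\[
\Eb_{\rho\sim\IUD}[\wt(x,0)] \ = \ \tfrac12\ft_{\Hor}(x) \ + \ \tfrac12\ft_{\Ver}(x) \ = \ \tfrac12\,\mathbbm 1\{x=(0,0)\},
\]
whence $\Eb_{\rho\sim\IUD}\big[\sum_{x\in B_{r+1}}\wt(x,0)\big] = \tfrac12$; this is exactly the improvement over the crude estimate $|\wt(x,0)|\le|\ft(x)|$ used in the proof of Theorem~\ref{thmrecurrence many walker}. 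For the remaining term I would keep the deterministic pointwise bound $|\wt(x,t)|\le|\ft(x)|$ and invoke Lemma~\ref{lemweight function sum}, so that $\big|\Eb_{\rho\sim\IUD}\Eb\big[\sum_{x\in B_{r+1}}\wt(x,\tau_{k,r}(\rho))\big]\big| \le \sum_{x\in B_{r+1}}|\ft(x)| = \tfrac2\pi\ln r + O(1)$. Plugging both estimates in, then letting $r\to\infty$ (the $k$ and $C$ terms disappear, being divided by $\ln r$) and finally $k\to\infty$, gives
\[
\lim_{k\to\infty}\lim_{r\to\infty}\bar p_{k,r} \ \geq\ 1 \ - \ \frac{|2q-1|}{q},
\]
and a short case check on the sign of $2q-1$ shows this is strictly positive precisely when $q\in(\tfrac13,1)$: for $q\ge\tfrac12$ the condition $q>|2q-1|$ reduces to $q<1$, while for $q<\tfrac12$ it reduces to $q>\tfrac13$ (note that $q=1$ gives the value $0$, which is why that case is treated separately).

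To conclude, since $p_{k,r}(\rho)$ is monotone in $k$ and in $r$ and uniformly bounded by $1$, monotone/dominated convergence justifies interchanging $\Eb_{\rho\sim\IUD}$ with the iterated limit, so by Lemma~\ref{lemreturn probability limit}
\[
\Eb_{\rho\sim\IUD}\big[p_{\rec}(\rho)\big] \ = \ \lim_{k\to\infty}\lim_{r\to\infty}\bar p_{k,r} \ \geq\ 1 - \frac{|2q-1|}{q} \ > \ 0 .
\]
By Proposition~\ref{proposition: zero-one law IUD}, $p_{\rec}(\rho)$ is $\IUD$-almost surely equal to a single constant in $\{0,1\}$, and since its mean is strictly positive that constant must be $1$, which is the assertion of Proposition~\ref{proprecurrence IID subcritical}. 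The step requiring the most care is the interchange of the $\IUD$-expectation with the two limits (appealing to the monotonicity of $p_{k,r}(\rho)$ already used in Lemma~\ref{lemreturn probability limit}) together with keeping track of the sign of $\tfrac{2q-1}{q}$ so that the threshold comes out as $\tfrac13$; the rest is a direct re-run of the Theorem~\ref{thmrecurrence many walker} computation with the centered weights.
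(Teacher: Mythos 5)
Your proposal is correct and follows essentially the same route as the paper: averaging the lower bound of Lemma~\ref{lemreturn probability lower bound} over $\IUD$, using that $\Eb_{\rho\sim\IUD}\big[\sum_{x\in B_{r+1}}\wt(x,0)\big]=\tfrac12$ (the paper's Lemma~\ref{lemweight initial configuration}), bounding the terminal-weight term by Lemma~\ref{lemweight function sum}, and concluding with the bound $1-\tfrac{|2q-1|}{q}>0$ and the zero-one law of Proposition~\ref{proposition: zero-one law IUD}. The interchange of expectation and limits that you justify by dominated convergence is exactly the bounded convergence step in the paper's proof, so no gap remains.
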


\smallskip

The proof of Proposition~\ref{proprecurrence IID subcritical} builds on the same argument used in the proof of Theorem~\ref{thmrecurrence many walker}.
The previous argument used the following trivial upper bound in \eqref{eqweight upper bound crude 0},
\[\big| \wt(x,0)  \big|    \quad \leq \quad \max\{ \big |\ft_{\Hor}(x)\big|, \big|\ft_{\Ver}(x) \big|  \}\..  \]
This upper bound can be improved by  the following lemma,
which  computes  the exact value of \. $ \Eb_{\rho \sim \IUD}\big[\wt(x,0)\big]$\.. (Note that this lemma applies to all values of $q$.)

\smallskip

\begin{lemma}\label{lemweight initial configuration}
	For all $r \geq 1$,
	\[  \sum_{x \in B_{r+1}}   \Eb_{\rho \sim \IUD}\big[\wt(x,0)\big] \quad = \quad \frac{1}{2}\..  \]
\end{lemma}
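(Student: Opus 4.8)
The plan is to compute $\Eb_{\rho\sim\IUD}[\wt(x,0)]$ pointwise and then sum over $x\in B_{r+1}$, exploiting a telescoping identity coming from the harmonicity of the potential kernel. Since $\rho$ is sampled from $\IUD$, for each fixed $x$ the label $\rho(x)$ is $\Hor$ or $\Ver$ each with probability $\tfrac12$, independently across vertices. Hence from the definition of the weight in \eqref{eqweight},
\begin{equation*}
\Eb_{\rho\sim\IUD}\big[\wt(x,0)\big] \ = \ \tfrac12\,\ft_{\Hor}(x) \ + \ \tfrac12\,\ft_{\Ver}(x) \ =_{\eqref{equation: weight harmonic}} \ \tfrac12\,\mathbbm{1}\{x=(0,0)\}\..
\end{equation*}
So the pointwise expectation vanishes at every vertex except the origin, where it equals $\tfrac12$. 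Summing over $x\in B_{r+1}$ (which contains the origin for every $r\geq 1$) immediately gives $\sum_{x\in B_{r+1}}\Eb_{\rho\sim\IUD}[\wt(x,0)] = \tfrac12$, as claimed.

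The only subtlety worth a sentence is that $B_{r+1}$ is a finite set, so the sum is a finite sum of expectations and interchanging $\sum$ and $\Eb$ requires no justification; and the identity $\ft_{\Hor}+\ft_{\Ver}=\mathbbm 1\{x=(0,0)\}$ used above is exactly \eqref{equation: weight harmonic}, which is a direct consequence of the defining relation \eqref{equation: potential kernel harmonic} for the potential kernel. There is essentially no obstacle here: the lemma is a one-line consequence of the harmonicity identity once one observes that averaging over the uniform label distribution replaces $\wt(x,0)$ by $\tfrac12(\ft_{\Hor}(x)+\ft_{\Ver}(x))$. The content that makes this useful downstream is precisely that the sum is a fixed constant $\tfrac12$ rather than something growing like $\ln r$, so when it is subtracted from the telescoped weight sum in Lemma~\ref{lemreturn probability lower bound} it does not contribute to the leading order, allowing the improved lower bound \eqref{eqlower bound IID} to beat the crude bound from Lemma~\ref{lemrecurrence many walker lower bound}.
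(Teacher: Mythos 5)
Your proposal is correct and matches the paper's proof exactly: both compute $\Eb_{\rho\sim\IUD}[\wt(x,0)]=\tfrac12(\ft_{\Hor}(x)+\ft_{\Ver}(x))=\tfrac12\mathbbm{1}\{x=(0,0)\}$ via \eqref{equation: weight harmonic} and then sum over the finite set $B_{r+1}$. No gaps.
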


\smallskip

\begin{proof}
	We have from \eqref{equation: weight harmonic} that, for every $x \in \Zb^2$,
	\[  \Eb_{\rho \sim \IUD}\big[\wt(x,0)\big] \quad = \quad \frac{\ft_{\Hor}(x) \ + \ \ft_{\Ver}(x)}{2}  \quad = \quad   \frac{1}{2}\. \mathbbm{1}\{x=o  \}\., \]
	for which the lemma now follows.
\end{proof}

\smallskip

We now present the proof of Proposition~\ref{proprecurrence IID subcritical}.

\smallskip

\begin{proof}[Proof of Proposition~\ref{proprecurrence IID subcritical}]
	We have 
	\begin{equation}\label{eqdelta 1}
	\begin{split}
		\Eb_{\rho \sim \IUD} \big[p_{\rec}(\rho)  \big] \quad &=_{\text{Lem~\ref{lemreturn probability limit}}} \quad 
			\Eb_{\rho \sim \IUD} \big[ \lim_{k \to \infty}\lim_{r \to \infty} p_{k,r}(\rho)  \big] \\
			\quad  & = \quad \lim_{k \to \infty}\lim_{r \to \infty}  \Eb_{\rho \sim \IUD} \big[  p_{k,r}(\rho)  \big]\.,
	\end{split}
	\end{equation}
		where the second equality is due to the bounded convergence theorem.
	
	Now note that 
	\begin{equation*}
	\begin{split}
		& \lim_{r \to \infty} \Eb_{\rho \sim \IUD} \big[  p_{k,r}(\rho)  \big]\\
		& \geq_{\text{Lem~\ref{lemreturn probability lower bound}}} \quad 
		1  \ - \   \limsup_{r \to \infty} \frac{\pi}{2 n \ln r} \ts \frac{2q-1}{q} \ts \sum_{x \in B_{r+1}}  \big(  \Eb_{\rho \sim \IUD} \big[\wt(x,0)\big]  \ - \  \Eb_{\rho \sim \IUD} \. \Eb \big[\wt\big(x,\tau_{k,r}(\rho)\big)\big] \big)\\
		& =_{\text{Lem~\ref{lemweight initial configuration}}} \quad 
		1  \ - \   \limsup_{r \to \infty} \frac{\pi}{2 n \ln r} \ts \frac{2q-1}{q} \ts \sum_{x \in B_{r+1}}  \big(   - \.  \Eb_{\rho \sim \IUD} \. \Eb \big[\wt\big(x,\tau_{k,r}(\rho)\big)\big] \big)\..
	\end{split}
	\end{equation*}
	On the other hand, we have by the definition of the weight function that
	\begin{equation}\label{eqweight upper bound crude}
	 \big| \Eb \big[\wt\big(x,\tau_{k,r}(\rho)\big) \big] \big|  \quad \leq \quad  \max\{ \big|f_{\Hor}(x)\big|, \big|f_{\Ver}(x)\big|  \} \quad = \quad |\ft(x)| \..
	\end{equation}
	Plugging \eqref{eqweight upper bound crude} into the previous inequality, we get
	\begin{equation*}
	\begin{split}
	& \lim_{r \to \infty} \Eb_{\rho \sim \IUD} \big[  p_{k,r}(\rho)  \big] 	\quad  \geq \quad 
		1  \ - \   \limsup_{r \to \infty} \frac{\pi}{2 n \ln r} \ts \frac{|2q-1|}{q} \ts \sum_{x \in B_{r+1}}  |\ft(x)| \\ & =_{\text{Lem~\ref{lemweight function sum}}} \quad 
		1  \ - \   \limsup_{r \to \infty} \frac{\pi}{2 n \ln r} \ts \frac{|2q-1|}{q} \ts \left( \frac{2}{\pi} \ln r \ + \ O(1) \right) 
		\quad = \quad 1 \ -  \ \frac{|2q-1|}{nq}\..
	\end{split}
	\end{equation*}
	Plugging the inequality above into \eqref{eqdelta 1}, we get
	\begin{equation}\label{eqlower bound IID}
			 \Eb_{\rho \sim \IUD} \big[p_{\rec}(\rho)  \big]  \quad \geq  \quad 1 \ -  \ \frac{|2q-1|}{nq}\..
	\end{equation}
	Now note that the right side of \eqref{eqlower bound IID} is strictly greater than $0$ since $q \in (\frac{1}{3},1)$ and $n=1$\..
	 It now follows from Proposition~\ref{proposition: zero-one law IUD} that 
	 \. $p_{\rec}(\rho) \. = \. 1$  \. for almost every rotor configuration $\rho$ sampled from $\IUD$, as desired.
\end{proof}

\bigskip

\section{Proof of Theorem~\ref{thmrecurrence IID}, case $q=1$}\label{secproof of recurrence IID critical}

In this section we prove Theorem~\ref{thmrecurrence IID} for  $q=1$.

\smallskip
\begin{proposition}\label{proprecurrence IID critical}
	Let $q = 1$.
	Then, for  the $\Hor$--$\Ver$ walk with a single walker,
	\[\text{$p_{\rec}(\rho) \ = \ 1$  \qquad  for almost every  \. $\rho$ sampled from $\IUD$\..} \]
\end{proposition}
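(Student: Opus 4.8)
The obstruction at $q=1$ is that the lower bound \eqref{eqlower bound IID} becomes $1 - \frac{|2q-1|}{nq} = 1 - 1 = 0$, which is vacuous. So the strategy is to revisit the proof of Proposition~\ref{proprecurrence IID subcritical} and squeeze out a strictly positive lower bound for $\Eb_{\rho\sim\IUD}[p_{\rec}(\rho)]$ by not throwing away the term $\Eb_{\rho\sim\IUD}\Eb[\wt(x,\tau_{k,r}(\rho))]$ via the crude bound \eqref{eqweight upper bound crude}. The point is that at time $\tau_{k,r}$ the rotor at $x$ is not uniformly distributed over $\{\Hor,\Ver\}$ in general, but we want to argue it is \emph{not too far} from uniform, so that $\Eb_{\rho\sim\IUD}\Eb[\wt(x,\tau_{k,r})]$ is small compared to $|\ft(x)|$ — strictly less, by a definite fraction, after summing over $x\in B_{r+1}$. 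Specifically, I expect the anti-concentration inequality advertised in the introduction (Lemma~\ref{lemRadon Nikodym 2}) to say something like: for every vertex $x$ and every label $\ell$, $\Pb_{\rho\sim\IUD}\Pb[\zeta_{\tau_{k,r}}(x)=\ell] \le \frac{1}{2} + (\text{something summable or $o(\ln r)$ after weighting by }|\ft(x)|)$, or more precisely that the signed quantity $\Eb_{\rho\sim\IUD}\Eb[\wt(x,\tau_{k,r})]$ has the same sign structure as $\ft_{\Hor}(x)-\ft_{\Ver}(x)$ only up to a controlled defect.

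The concrete steps I would carry out are: (1) Rerun the computation in the proof of Proposition~\ref{proprecurrence IID subcritical} up to the line where \eqref{eqweight upper bound crude} is invoked, keeping $q=1$ so the coefficient $\frac{2q-1}{q}=1$. This gives $\lim_{r\to\infty}\Eb_{\rho\sim\IUD}[p_{k,r}(\rho)] \ge 1 - \limsup_{r\to\infty}\frac{\pi}{2n\ln r}\sum_{x\in B_{r+1}}\bigl(-\Eb_{\rho\sim\IUD}\Eb[\wt(x,\tau_{k,r}(\rho))]\bigr)$, using Lemma~\ref{lemweight initial configuration} to kill the $\wt(x,0)$ contribution (it contributes only $\tfrac12$, hence $O(1)$). (2) Decompose $\Eb_{\rho\sim\IUD}\Eb[\wt(x,\tau_{k,r})] = \ft_{\Hor}(x)\,\Pb_{\rho\sim\IUD}\Pb[\zeta_{\tau_{k,r}}(x)=\Hor] + \ft_{\Ver}(x)\,\Pb_{\rho\sim\IUD}\Pb[\zeta_{\tau_{k,r}}(x)=\Ver]$ and, using $\ft_{\Ver}=-\ft_{\Hor}+O(|x|^{-4})$ from Lemma~\ref{lemweight function asymptotic} together with \eqref{equation: weight harmonic}, rewrite this as $\ft_{\Hor}(x)\bigl(2\Pb_{\rho\sim\IUD}\Pb[\zeta_{\tau_{k,r}}(x)=\Hor]-1\bigr) + O(|x|^{-4})$ for $x\ne(0,0)$; note $\sum_x O(|x|^{-4})=O(1)$. (3) Invoke Lemma~\ref{lemRadon Nikodym 2} (the anti-concentration inequality) to bound $\bigl|2\Pb_{\rho\sim\IUD}\Pb[\zeta_{\tau_{k,r}}(x)=\Hor]-1\bigr| \le 1-\delta$ for some absolute $\delta>0$ (or at least outside a set of $x$ of negligible weighted measure). (4) Combine with $|\ft_{\Hor}(x)| \le |\ft(x)|$ and Lemma~\ref{lemweight function sum} to get $\Bigl|\sum_{x\in B_{r+1}}\Eb_{\rho\sim\IUD}\Eb[\wt(x,\tau_{k,r})]\Bigr| \le (1-\delta)\bigl(\tfrac{2}{\pi}\ln r + O(1)\bigr)$, whence $\lim_{r\to\infty}\Eb_{\rho\sim\IUD}[p_{k,r}(\rho)] \ge 1-\frac{\pi}{2n}\cdot\frac{2(1-\delta)}{\pi} = 1-\frac{1-\delta}{n} = \delta > 0$ since $n=1$. (5) Let $k\to\infty$ and use the bounded convergence argument of \eqref{eqdelta 1} to conclude $\Eb_{\rho\sim\IUD}[p_{\rec}(\rho)] \ge \delta > 0$, then apply the zero-one law Proposition~\ref{proposition: zero-one law IUD} — actually here we need the stronger zero-one law Proposition~\ref{proposition: zero-one law IUD} together with whichever additional zero-one law handles the non-elliptic case (the introduction mentions Proposition~\ref{proposition: zero-one law IUD} as "another zero-one law") — to upgrade positivity to $p_{\rec}(\rho)=1$ for a.e.\ $\rho$.

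The main obstacle is unquestionably step (3): establishing the anti-concentration / Radon–Nikodym estimate Lemma~\ref{lemRadon Nikodym 2}. At $q=1$ the $\Hor$--$\Ver$ walk is non-elliptic (it is essentially a deterministic rotor-type rule together with the IID randomness in the initial configuration only), so the distribution of $\zeta_{\tau_{k,r}}(x)$ is quite rigid, and one must show it nevertheless cannot concentrate fully on one label. I would expect the argument to go through a coupling: flip the initial label at $x$ (or at a nearby vertex) and track how the walk trajectory and the induced stack diverge, using Theorem~\ref{theorem: recurrence is invariant under stack relation} / the abelian property to argue the two coupled walks have comparable return behavior; the uniform prior on $\rho$ then forces the posterior label distribution at $x$ (at the stopping time) to be bounded away from a point mass, since a single coordinate flip of a uniform Bernoulli vector has a Radon–Nikodym derivative bounded above and below. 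Getting the constant $\delta$ to be genuinely uniform in $x$, $k$, and $r$ — rather than degrading as $|x|\to\infty$ or $r\to\infty$ — is the delicate part, and is presumably exactly what Lemma~\ref{lemRadon Nikodym 2} is engineered to deliver.
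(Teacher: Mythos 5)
Your outer scaffolding (steps (1), (2), (4), (5)) is exactly the paper's reduction: keep the term $\Eb_{\rho \sim \IUD}\,\Eb[\wt(x,\tau_{k,r}(\rho))]$, show the law of $\zeta_{\tau}(x)$ is boundedly far from a point mass so that this term is at most $(1-\delta)|\ft(x)|$ (the paper's Lemma~\ref{lemRadon Nikodym}, with $\delta = \frac{1}{58\,2^{58}}$, valid for $x \in B_{r-3}\setminus B_3$, the remaining vertices being absorbed into the $O(1)$), sum via Lemma~\ref{lemweight function sum} to get $\Eb_{\rho\sim\IUD}[p_{\rec}(\rho)] \ge \delta > 0$, and conclude with Proposition~\ref{proposition: zero-one law IUD} (which is the only zero-one law needed here; ellipticity plays no role in it, so your hedge about an ``additional'' law is unnecessary). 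Your step (2) antisymmetry is even exact away from the origin by \eqref{equation: weight harmonic}, not just up to $O(|x|^{-4})$.

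The genuine gap is step (3), and the mechanism you propose for it would not work. The final label at $x$ at time $\tau$ is $\rho(x)$ composed with the parity of the number of visits to $x$ (since $q=1$ every visit flips the label). Flipping $\rho(x)$ is measure-preserving for $\IUD$, so the ``Radon--Nikodym derivative bounded above and below'' observation yields only a trivial identity; it says nothing about whether the conditional visit-parity can conspire with $\rho(x)$ so that $\zeta_\tau(x)$ is almost surely, say, $\Ver$. Worse, the coupling you suggest breaks down at the first visit to $x$: at $q=1$ the exit from $x$ under $\rho$ and under the flipped $\rho^x$ is along perpendicular axes, the two trajectories then diverge completely, and neither Lemma~\ref{lemabelian property} nor Theorem~\ref{theorem: recurrence is invariant under stack relation} (which concern recurrence of the unstopped walk, not visit counts up to the stopping time $\tau_{k,r}$) lets you compare their visit parities. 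What is actually needed --- and what the paper's Lemma~\ref{lemRadon Nikodym 2} provides --- is a comparison of $p_{even}(\rho,x)$ and $p_{odd}(\rho,x)$ \emph{for the same fixed environment} $\rho$, uniformly in $\rho$, $x$, $k$, $r$. The paper gets this by explicit path surgery inside the $5\times 5$ box around $x$: Lemmas~\ref{lemword 1} and~\ref{lemword 2} build, from the configuration at the last visit to $\Dc(x)$, an admissible detour of length at most $58$ hitting $x$ exactly once and restoring the rotor at the re-entry vertex; the resulting map $\phi$ on stopped admissible paths changes the visit parity, costs a factor $2^{-58}$ in probability, and is at most $58$-to-one (Lemmas~\ref{lemphi well defined} and~\ref{lemphi preimage}), giving the uniform constant $58\,2^{58}$. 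One then still needs the parity bookkeeping of Lemma~\ref{lemRadon Nikodym 3}, where the event of never visiting $x$ (whose probability $p_0(\rho,x)$ does not depend on $\rho(x)$) contributes the symmetric term $\tfrac12 p_0$ after averaging over $\IUD$, and the even/odd comparison handles the rest. None of this is recoverable from the flip coupling, so as written your plan assumes precisely the new ingredient it was supposed to supply.
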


\smallskip

We now build toward the proof of Proposition~\ref{proprecurrence IID critical}.
Throughout this section,
 the $\Hor$--$\Ver$ walk has $n=1$ walker  and with $q=1$.
 With the exception of the proof of Proposition~\ref{proprecurrence IID critical}, the constants $k \geq 0$ and $r\geq 1$ are fixed.
We denote by  \. $(Y_t, \zeta_t)_{t \geq 0}$ \. the single-walker $\Hor$--$\Ver$ walk with the initial location $(0,0)$,
with the initial rotor configuration $\rho$,
and with the walker  frozen upon reaching $\partial B_r$.
Note that $Y_t$ is the location of the single walker, and $\zeta_t$ is the rotor configuration after the first $t$ steps of the walk.
Recall from \eqref{eqtau} that  \. $\tau(\rho):=\tau_{k,r}(\rho)$ \.  is the first time the walker either, returns to the origin $k$ times, or, reaches $\partial B_r$\..

The proof of Proposition~\ref{proprecurrence IID critical} follows almost the same argument as that of  Proposition~\ref{proprecurrence IID subcritical}.
Indeed, the previous  argument failed to give recurrence for the case $q=1$ because
we use the  trivial upper bound in \eqref{eqweight upper bound crude},
\[ \big| \Eb \big[\wt\big(x,\tau(\rho) \big) \big] \big|  \quad \leq \quad  |\ft(x)| \., \]
which in turn only  gives to the trivial lower bound  that the recurrence probability is nonnegative.
Thus Proposition~\ref{proprecurrence IID critical} 
follows by substituting \eqref{eqweight upper bound crude} with the upper bound from the lemma below.


\smallskip

\begin{lemma}\label{lemRadon Nikodym}
	Let $r \geq 6$. Then for every \. $x \. \in \.   B_{r-3} \setminus B_3$,
	\begin{equation}\label{eqRadon Nikodym}
	\big|\Eb_{\rho \sim \IUD} \. \Eb \big[\wt\big(x,\tau(\rho) \big)\big] \big|  \quad \leq \quad  \left(1-\frac{1}{58 \. 2^{58}}\right) \. |\ft(x)| \.. 
	\end{equation}
\end{lemma}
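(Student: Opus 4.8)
The plan is to show that the conditional expectation $\Eb[\wt(x,\tau(\rho))]$, viewed as a convex combination of $\ft_\Hor(x)$ and $\ft_\Ver(x)$, cannot be concentrated entirely on whichever of these two values has the larger absolute value. Concretely, for a fixed $x \in B_{r-3}\setminus B_3$, write $p_\Hor(x) := \Pb[\zeta_{\tau(\rho)}(x) = \Hor]$ so that $\Eb[\wt(x,\tau(\rho))] = p_\Hor(x)\ft_\Hor(x) + (1-p_\Hor(x))\ft_\Ver(x)$; by \eqref{equation: weight harmonic} and $x \neq (0,0)$ we have $\ft_\Hor(x) = -\ft_\Ver(x)$, so $\Eb[\wt(x,\tau(\rho))] = (2p_\Hor(x)-1)\ft_\Hor(x)$ and $|\Eb[\wt(x,\tau(\rho))]| = |2p_\Hor(x)-1|\cdot|\ft(x)|$. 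Thus the lemma reduces to the anti-concentration bound
\[
 \Eb_{\rho\sim\IUD}\bigl[\,|2p_\Hor(x)-1|\,\bigr] \ \leq \ 1-\frac{1}{58\cdot 2^{58}},
\]
which will in turn follow from showing that with probability at least $\frac{1}{29\cdot 2^{58}}$ over $\rho\sim\IUD$ and the walk, the label of $x$ at the (random) stopping time $\tau(\rho)$ is $\Hor$, and likewise the symmetric event for $\Ver$. The number $58$ strongly suggests the argument inspects a bounded neighborhood of $x$ (radius on the order of a few lattice steps, hence $\le 58$ relevant rotors with $2^{58}$ configurations) and exhibits an explicit "witness" event.

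The key steps, in order, are: (i) reduce to the anti-concentration statement above via \eqref{equation: weight harmonic}, as just described; (ii) condition on the entire trajectory of the walk except for the rotor values in a fixed finite window $W$ around $x$ (e.g. the at-most-$58$ vertices within a small ball of $x$), using the $\IID$ structure of $\IUD$ to argue that, conditionally, these rotors are still independent uniform until the window is first entered; (iii) show that whatever the conditional law of the final label at $x$ is, one can modify the initial rotors in $W$ on an event of probability at least $2^{-58}$ (resampling them to a prescribed configuration) so that the walk, when it reaches $x$ for the last time before $\tau$, leaves $\zeta_\tau(x)=\Hor$ — and symmetrically for $\Ver$ — the factor $\frac{1}{58}$ coming from a union/averaging over which vertex in the window is visited last or over the bounded number of steps taken inside it; (iv) assemble these two events to conclude both $p_\Hor(x)$ and $1-p_\Hor(x)$ are, on average, bounded below, hence $\Eb[|2p_\Hor(x)-1|] \le 1 - 2\cdot\frac{1}{58\cdot 2^{58}} \cdot(\text{const})$, and tidy constants to land exactly at $1-\frac{1}{58\cdot 2^{58}}$. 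The hypotheses $x\in B_{r-3}\setminus B_3$ and $r\ge 6$ are exactly what is needed to guarantee that the window $W$ around $x$ is disjoint from both the origin and $\partial B_r$, so that the freezing/return-counting rules do not interfere with the local resampling surgery.

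The main obstacle I anticipate is step (iii): making the "surgery" argument rigorous when $q=1$, i.e. in the \emph{non}-elliptic regime. Because every visit to a vertex flips its label deterministically, the label seen at $x$ at time $\tau$ is a deterministic function of the initial rotors and the parity of the number of visits to $x$ before $\tau$, and that visit-count is itself entangled with the surrounding rotors — changing a rotor in $W$ can change the trajectory and hence change how many times $x$ is visited. The careful part is to set up the coupling so that flipping a controlled, bounded number of rotors (and possibly one extra bit to adjust parity) changes the final label at $x$ while keeping the relevant features of the trajectory — in particular the event $\{R_{\tau} = k\}$ versus $\{$frozen$\}$ and the time $\tau$ itself — under enough control that the probability estimate survives. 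One clean way to handle this is to localize to the \emph{last} excursion that the walk makes into $W$ before time $\tau$: after that last visit the rotors in $W$ are never touched again, so one only needs to control the short piece of trajectory from the moment $x$ is last visited until the walk permanently exits $W$, which is a bounded-length path and hence costs at most a $2^{-O(1)}$ factor; the number $58$ then records the (generous) bound on $|W|$ together with the length of that escape path.
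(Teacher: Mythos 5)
Your step (i) reduction goes off the rails at the point where you claim the lemma reduces to $\Eb_{\rho\sim\IUD}\big[\,|2p_{\Hor}(x)-1|\,\big]\le 1-\frac{1}{58\cdot 2^{58}}$. For a fixed $\rho$ (with $q=1$ every visit flips the label), one has $|2p_{\Hor}(x)-1|=|1-2p_{odd}(\rho,x)|$ regardless of $\rho(x)$, because on the never-visit event the final label is just $\rho(x)$. When the frozen walk is unlikely ever to reach $x$ (e.g.\ small $k$ and $|x|$ large), $p_{odd}(\rho,x)$ is tiny for typical $\rho$, so your intermediate quantity is close to $1$ and the claimed bound is false in general. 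What the lemma actually needs is the bound with the absolute value \emph{outside} the $\rho$-average, and the only reason the never-visit term does not kill it is that $p_{0}(\rho,x)$ does not depend on $\rho(x)$: averaging over the uniform, independent rotor at $x$ cancels that term exactly, leaving a comparison of $p_{even}$ and $p_{odd}$ — this is precisely the content of the paper's Lemma~\ref{lemRadon Nikodym 3}. Your subsequent two-sided claim (both labels occur at time $\tau$ with joint probability bounded below) would imply the lemma directly and so the false intermediate could be excised, but note it neither follows from nor implies your $\Eb_{\rho}|\cdot|$ statement, and proving it still forces you through the same cancellation over $\rho(x)$ plus a quenched even-versus-odd visit comparison.

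That comparison is the real work, and your sketch leaves it unproved. The resampling framing in steps (ii)--(iii) is circular: you cannot condition on ``the entire trajectory of the walk except for the rotor values in $W$'' and still treat the rotors in $W$ as free, since the trajectory after the walk first enters $W$ is a function of those rotors. The paper avoids this by fixing $\rho$ entirely and doing surgery on the path measure (the $\pm$ coin flips): for each admissible path it locates the \emph{last} visit to the outer layer $\Dc(x)$ of the $5\times 5$ box $\Ac(x)$ and inserts there a detour of length at most $58$ which is admissible for the current rotor state, visits $x$ exactly once (flipping the parity), and returns the walker to the same vertex with the same rotor there, so the remainder of the path stays admissible and the stopping rule \eqref{eqTau} is untouched (this is exactly what $x\in B_{r-3}\setminus B_3$ and $r\ge 6$ buy). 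The existence of such a detour for \emph{every} rotor state is a nontrivial construction (Lemmas~\ref{lemword 1} and~\ref{lemword 2}), and the quantitative bound comes from the $2^{-58}$ cost of the inserted detour together with the insertion map being at most $58$-to-$1$ (Lemmas~\ref{lemphi well defined} and~\ref{lemphi preimage}), yielding the quenched ratio $\frac{1}{58\cdot 2^{58}}\le p_{even}(\rho,x)/p_{odd}(\rho,x)\le 58\cdot 2^{58}$ of Lemma~\ref{lemRadon Nikodym 2}; in particular $58$ is a path-length/multiplicity bound, not the number of rotors in a window. Your ``localize to the last excursion'' remark points in the right direction, but the parity-flipping mechanism, the restoration of position and rotor, and the multiplicity count are precisely the missing steps, so the probability estimate asserted in your steps (iii)--(iv) is not established.
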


\smallskip

\begin{remark}
	The constant \. $(1-\frac{1}{58 \. 2^{58}})$ \. in \eqref{eqRadon Nikodym} is far from tight, but it is strictly less than 1, which is  sufficient for the  proof of Proposition~\ref{proprecurrence IID subcritical}.
%
\end{remark}

We build toward the proof of Lemma~\ref{lemRadon Nikodym} in the coming two subsections.

\medskip

\subsection{Admissible paths}
We fix an element \. $x \in B_{r-3} \setminus B_3$  and a rotor configuration $\rho$ throughout this subsection.  
	Let \. $p_{even}(\rho,x)$ \. be the probability of the $\Hor$--$\Ver$ walk $(Y_t,\zeta_t)_{t \geq 0}$, with initialization $((0,0),\rho)$, visiting $x$ a positive even number of times before terminating (at either the origin or $\partial B_r$),
\[ p_{even}(\rho,x) \quad := \quad \Pb\big[\.  |\{t \in \{1,\ldots, \tau\} \. \mid \. Y_t = x  \}| \. \in \. \{2,4,6,\ldots\} \. \big]\..   \]
The probability $p_{odd}(\rho,x)$ \. is defined analogously.

The main goal of this subsection is to prove the following lemma, which in turn will be used to prove Lemma~\ref{lemRadon Nikodym}\..

\smallskip

\begin{lemma}\label{lemRadon Nikodym 2}
	Let $r \geq 6$. Then for every $x \in B_{r-3} \setminus B_3$ and every rotor configuration $\rho$,
	\[ \frac{1}{58 \. 2^{58}}  \quad \leq \quad   \frac{p_{even}(\rho,x)}{p_{odd}(\rho,x)} \quad \leq \quad  58 \. 2^{58}\.. \]
\end{lemma}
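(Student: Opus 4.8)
The plan is to compare $p_{even}(\rho,x)$ and $p_{odd}(\rho,x)$ by a path surgery localized in a small ball around $x$. Let $B^{*}$ be the set of vertices of Euclidean distance less than $3$ from $x$, a set of at most a few dozen vertices; since $x\in B_{r-3}\setminus B_{3}$ we have $B^{*}\subseteq B_{r}\setminus\{(0,0)\}$, so the frozen walk never terminates while inside $B^{*}$ and every vertex of $B^{*}$ is active. When $q=1$ each step of the $\Hor$--$\Ver$ walk picks one of two neighbours (along the axis forced by the current label, which has just flipped) with probability $\tfrac12$; hence a trajectory $\omega$ that the frozen walk can follow up to time $\tau$ has probability exactly $2^{-|\omega|}$, and $p_{even}(\rho,x)=\sum 2^{-|\omega|}$, the sum running over such $\omega$ that visit $x$ a positive even number of times (and similarly for $p_{odd}$).

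The first step is a last-exit decomposition: cut each trajectory into its maximal sojourns in $B^{*}$ (the \emph{$B^{*}$-excursions}), separated by excursions outside $B^{*}$, the last of which reaches the termination set; every visit to $x$ occurs during a $B^{*}$-excursion, and after the last $B^{*}$-excursion the walker never returns to $B^{*}$. Freeze everything except the internal path of the last $B^{*}$-excursion --- the outside skeleton, the earlier $B^{*}$-excursions, the exit move $w'\to v_{\mathrm{out}}$ of the last $B^{*}$-excursion, and the outside path from $v_{\mathrm{out}}$ to $\tau$. By the Markov property, conditionally on all of this the last $B^{*}$-excursion is the $\Hor$--$\Ver$ walk started at the (now known) entry vertex $w$ with the (now known) label configuration $\eta$ on $B^{*}$, conditioned to first leave $B^{*}$ via $w'\to v_{\mathrm{out}}$. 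The probability of the frozen data factors out identically from $p_{even}$ and $p_{odd}$, so it is enough to prove, for every tuple $(\eta,w,w',v_{\mathrm{out}})$ for which this conditional law exists, that the conditional probabilities of the last $B^{*}$-excursion visiting $x$ an even, respectively odd, number of times are both positive. Since $B^{*}$ is finite there are only finitely many such tuples, so this yields a uniform bound on $p_{even}/p_{odd}$ and $p_{odd}/p_{even}$; it is not the explicit $58\cdot 2^{58}$, but any finite bound suffices for Lemma~\ref{lemRadon Nikodym}, and the explicit constant is recovered by tracking the length of the surgery below.

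To show both parities occur, start from any $B^{*}$-trajectory $\sigma$ exiting via $w'\to v_{\mathrm{out}}$, and build one of the opposite $x$-parity. The idea is to insert into $\sigma$ --- at the last time $\sigma$ is at $x$ or at a ``sacrificial'' vertex $u\in B^{*}\setminus\{w'\}$ (for instance $u=w$ when $w\neq w'$) --- a short \emph{valid} closed sub-walk whose set of vertices departed from an odd number of times is exactly $\{x,u\}$. Such a sub-walk flips only the labels of $x$ and $u$; inserted at the last moment $\sigma$ meets $\{x,u\}$, the remainder of $\sigma$ never again sees $x$ or $u$ and therefore replays verbatim, and the exit move $w'\to v_{\mathrm{out}}$ (which involves neither $x$ nor $u$) is untouched --- but the parity of $x$-visits has changed. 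The structural point is a parity obstruction: $\Zb^{2}$ is bipartite, so every closed walk has even length and hence its set of odd-departure vertices has even size; the singleton $\{x\}$ is thus unreachable, which is precisely why one must co-opt a second, harmless vertex $u$. Existence of such a valid closed sub-walk of bounded length uses that $B^{*}$ contains a full neighbourhood of $x$ to maneuver in and that at each step only the \emph{axis} of motion, not its direction, is forced; carrying out the insertion at a deterministic location and choosing the sub-walk canonically makes the resulting correspondence between odd- and even-parity trajectories injective up to bounded multiplicity and with bounded length increase, which gives the explicit constant.

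The main obstacle is exactly this last construction: one must realize a walk with prescribed odd-departure set $\{x,u\}$ as a genuine $\Hor$--$\Ver$ trajectory out of the \emph{arbitrary} configuration $\eta'$ sitting on $B^{*}$ at the insertion time --- the forced-axis rule forbids prescribing the path freely --- while keeping its length, and the ambiguity of the reverse map, bounded by absolute constants. The bipartiteness obstruction dictates the right shape (two odd-departure vertices, one of them sacrificial), and the hypothesis $x\in B_{r-3}\setminus B_{3}$ is what rules out any interference from the termination mechanism during the surgery.
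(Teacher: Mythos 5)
Your overall strategy---a local path surgery near $x$ that flips the parity of the number of visits to $x$, with bounded length increase and bounded multiplicity of the reverse map---is the same as the paper's, but as written the proposal has a genuine gap, and the surgery you specify is internally inconsistent. You insert, at the last time the trajectory meets $\{x,u\}$, a closed admissible sub-walk whose odd-departure set is exactly $\{x,u\}$. Since the insertion vertex $v_0$ lies in $\{x,u\}$, its label is flipped by the insertion (for $q=1$ a label is flipped precisely when the number of departures from that vertex is odd). But the first step of the frozen remainder of the trajectory is a departure from $v_0$, and admissibility ties the axis of that move to the label at $v_0$ just before the move; with the label flipped, the identical move is no longer admissible, so the remainder cannot ``replay verbatim'' and your map does not land in the set of admissible trajectories. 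Your own bipartiteness observation shows this cannot be repaired within your localization: to keep the continuation intact the insertion vertex must receive an \emph{even} number of departures, and then (the closed sub-walk having even total length, and $x$ odd) some third vertex necessarily has its label flipped; to make that third vertex harmless the insertion point must be chosen so that the remainder avoids a whole neighbourhood of $x$, not merely $\{x,u\}$. This is exactly how the paper arranges the surgery: it inserts at the last visit to the outer layer $\Dc(x)$ of the annulus $\Ac(x)$, requires the inserted word to return to its starting vertex with the rotor there restored while visiting $x$ exactly once (Lemma~\ref{lemword 2}), and lets the parity-forced extra flipped rotors sit inside $\Ac(x)$, which the remainder never revisits by maximality of the insertion time.

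Second, even with the bookkeeping corrected, the heart of the lemma is the constructive fact that from an \emph{arbitrary} configuration on the neighbourhood one can realize such a bounded-length admissible closed insertion despite the forced-axis rule, together with a bound on how many trajectories map to a given one; this is the content of the paper's Lemmas~\ref{lemword 1}, \ref{lemword 2} (explicit case analysis, length at most $58$) and Lemma~\ref{lemphi preimage}. You explicitly defer this step as ``the main obstacle,'' so the proposal does not contain a proof of it. Your excursion/conditioning framing, with a compactness argument over the finitely many boundary tuples, is a reasonable softer route to \emph{some} finite constant (which would indeed suffice for Lemma~\ref{lemRadon Nikodym}), but it reduces to the same unproved constructive statement, so the gap remains.
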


\smallskip

We now build toward the proof of Lemma~\ref{lemRadon Nikodym 2}.

\smallskip

A \emph{word}  \. $w  = y_1 \ldots y_m$ \. is a finite string of vertices in $\Zb^2$.
We denote by \. $|w|  := m$ \. the length of the word $w$. 
For an arbitrary subset $S$ of $\Zb^2$, we write \. $w \in S$ \.  if  \. $y_1,\ldots, y_m$ \. are all contained in $S$.

Let  \. $(y,\eta)$ \. be an arbitrary vertex-and-rotor-configuration pair.
We define \. $\eta_t$ \. ($t \in \{0,1,\ldots,m\}$) \.
recursively by 
\begin{align*}
 (y_{0}, \eta_{0})  \quad & := \quad (y,\eta);\\
 \eta_{t}(x) \quad & := \quad
 \begin{cases}
 	\Hor & \text{ if } \ x= y_{t-1} \ \text{ and } \  y_t - y_{t-1} \ \in \ \{(1,0), -(1,0)\};\\
 	 \Ver & \text{ if } \ x= y_{t-1} \ \text{ and } \  y_t - y_{t-1} \ \in \ \{(0,1), -(0,1)\};\\
 	 \eta_{t-1}(x) & \text{ otherwise.} 
 \end{cases}
\end{align*}
We say that $w$ is an \emph{admissible path} for  \. $(y,\eta)$ \. if, for every $t \in \{1,\ldots,m\}$,
\begin{itemize}
	\item $y_{t}$ is a neighbor of $y_{t-1}$; and 
	\item \[\eta_{t-1}(y_{t-1}) \ = \ \begin{cases}
	\Ver & \text{ if } \  y_t - y_{t-1} \ \in \ \{(1,0), -(1,0)\};\\
	\Hor & \text{ if } \  y_t - y_{t-1} \ \in \ \{(0,1), -(0,1)\}.
	\end{cases} \]
\end{itemize}

\smallskip

The following image is useful:
The sequence $y_1, y_2\ldots, y_m$ is the locations of the walker after the first, second, \ldots,  $m$-th step
of an $\Hor$--$\Ver$ walk.
The sequence  \. $(y_0,\eta_{0})$\., \. $(y_0,\eta_{0})$\., \. \ldots\., \. $(y_{m},\eta_{m})$ \.
 is a feasible
 trajectory of the first $m$ steps of an $\Hor$--$\Ver$ walk.
 The probability for this trajectory to occur (for $q=1$) is equal to \. $\frac{1}{2^m}$ \. if  $w$ is an admissible path for $(y,\eta)$\.,
 and is equal to $0$ otherwise.
 
 \smallskip
 
 \begin{remark}
 	The admissible paths as defined above correspond to   \emph{legal executions}  for abelian networks introduced by Bond and Levine in \cite{BL},
 	with one difference being that the admissible path $y_1y_2\ldots y_m$ in this paper corresponds to the legal execution $y_0y_1\ldots y_{m-1}$ in the notation of Bond and Levine.
 	That is, our notation records the location of the walker \emph{after} $\Hor$--$\Ver$ moves are performed, while their notation records the location of the walker \emph{before} $\Hor$--$\Ver$ moves are performed.
 	In particular, the latter notation \emph{does not} record the final location of the walker,  and thus our notation is more suitable for the purpose of this paper.
 \end{remark}

\smallskip

We denote by $\Ac(x)$ the subset of $\Zb^2$ given by 
\[  \Ac(x) \ := \   \{ \. x +(i,j) \. \mid \.  i, j \in \{-2,-1,0,1,2\}\.,  \ (i,j) \neq (0,0)  \.  \}\.. \]
Note that $\Ac(x)$ is contained in $B_{r}$ since $x \in B_{r-3}$, \. $\Ac(x)$ does not contain $x$ by definition, and $\Ac(x)$ does not contain the origin since $x \notin B_3$\..

\smallskip

\begin{lemma}\label{lemword 1}
	Let $y$ be an element of $\Ac(x)$ and let $\eta$ be a rotor configuration.
	Then, for every  \. $y' \. \in \. \Ac(x)  \cap  \Ngh(y)$\., there exists a word $w$ such that
	\begin{itemize}
		\item $w$ is an admissible path for \. $(y,\eta)$ \. such that \. $y_{|w|}=y'$\.; and 
		\item $w \in \Ac(x)$ \. and \. $|w| \leq 5$\..
	\end{itemize} 
\end{lemma}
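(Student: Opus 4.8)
The plan is a short, self-contained case analysis driven by the label $\eta(y)$ at the starting vertex. First recall how the dynamics look when $q=1$: a step out of a vertex $v$ is admissible exactly when the current label at $v$ is $\Ver$ and the step is horizontal, or the current label is $\Hor$ and the step is vertical; moreover, after a step out of $v$ the label at $v$ equals $\Ver$ if that step was vertical and $\Hor$ if it was horizontal. In particular the label at any vertex is determined by the direction of the last departure from it, and arriving at a vertex never changes its label — these are the only facts about $\eta_t$ that will be used.

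If the label $\eta(y)$ is \emph{compatible} with $e:=y'-y$ (that is, $\eta(y)=\Ver$ with $e$ horizontal, or $\eta(y)=\Hor$ with $e$ vertical), then the one-letter word $w=y'$ works, so assume $\eta(y)$ is incompatible with $e$. By applying a lattice isometry fixing $x$ — a reflection through a coordinate axis through $x$, or the $90^\circ$ rotation about $x$, each of which maps $\Ac(x)$ onto itself and carries admissible paths to admissible paths (the rotation swapping $\Hor\leftrightarrow\Ver$) — we may assume $e=h$ is horizontal and $\eta(y)=\Hor$; then every admissible first step out of $y$ is vertical.

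Next I claim that, because $\Ac(x)$ is a $5\times5$ square with only its center deleted, one can choose a vertical unit vector $v_1$ so that both $z:=y+v_1$ and $u:=z+h=y'+v_1$ lie in $\Ac(x)$. This is a routine inspection of the second coordinate $b$ of $y-x$ (which equals the second coordinate of $y'-x$): when $b\in\{-1,0,1\}$ both signs of $v_1$ keep the relevant points in range and the unique lattice point that could coincide with the deleted center can be avoided by the choice of sign, while when $b=\pm2$ the sign of $v_1$ is forced but then $z$ and $u$ land on the line $b=\mp1$, away from the center. With $z,u$ fixed, I would split into three cases according to $\eta(z),\eta(u)$ and write down an explicit admissible path in each, checking admissibility by tracking labels step by step. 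If $\eta(z)=\Hor$, take $w = z\,y\,y'$: leave $y$ vertically to $z$ (setting the label of $y$ to $\Ver$), return $z\to y$ using $\eta(z)=\Hor$, then leave $y$ horizontally to $y'$. If $\eta(z)=\Ver$ and $\eta(u)=\Hor$, take $w=z\,u\,y'$: leave $z$ horizontally to $u$ using $\eta(z)=\Ver$, then leave $u$ vertically to $y'=u-v_1$ using $\eta(u)=\Hor$. If $\eta(z)=\eta(u)=\Ver$, take $w=z\,u\,z\,y\,y'$: the extra step $u\to z$ first resets the label of $z$ to $\Hor$ (and $u$ is not revisited, so $\eta(u)=\Ver$ is still available for $u\to z$), after which $z\to y$ and $y\to y'$ go through as in the first case. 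In every case $|w|\le 5$, and by the previous paragraph all vertices of $w$ lie in $\Ac(x)$.

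The only genuinely delicate point is the geometric claim of the third paragraph: one must guarantee that the auxiliary vertices $z$ and $u$ can always be kept inside $\Ac(x)$, which is exactly where the size of $\Ac(x)$ — and hence the hypotheses $r\ge 6$ and $x\in B_{r-3}\setminus B_3$, ensuring $\Ac(x)\subseteq B_r$ with $\Ac(x)$ avoiding both $x$ and the origin — is used. Everything else is bookkeeping of labels along three explicit walks of length at most $5$.
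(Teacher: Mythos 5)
Your proof is correct and follows essentially the same route as the paper's: after disposing of the case where the label at $y$ already permits a direct step to $y'$, you build an explicit admissible detour of length at most $5$ through one or two auxiliary neighbors ($z$ and $u$) with a case split on their labels, and your three cases correspond exactly to the paper's cases (ii)--(iv) in its representative corner case $y=x+(2,2)$, $y'=x+(1,2)$, with the same auxiliary vertices. The only difference is presentational: the paper verifies one representative configuration and appeals to similarity, whereas you treat all positions uniformly via the symmetries of $\Ac(x)$ together with the short geometric check that $z$ and $u$ can be kept inside $\Ac(x)$, which is, if anything, more complete.
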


\smallskip

\begin{proof}
	We present only the proof for the case  \. $y=x+(2,2)$ \. and \. $y'=x+(1,2)$\.,
	as the proofs for other cases follow from a similar argument.
	There are four  possibilities to consider:
	 \begin{enumerate}
	 	\item $\eta(x+(2,2)) \ = \ \Ver$\..
	 	In this case, let \. $w=y_1$  with $y_1=x+(1,2)$\..
	 	Then $w$   has length 1 and satisfies the conditions in the lemma.
	 	\item  $\eta(x+(2,2)) \ = \ \Hor$ and $\eta(x+(2,1)) \ = \ \Hor$\..
	 	In this case, let $w=y_1y_2y_3$ with 
	 	\[y_1 \ = \ x+(2,1), \qquad  y_2 \ = \ x+(2,2),\qquad  y_3 \ = \ x+(1,2)\.. \] 	
	 	Then $w$   has length 3 and satisfies the conditions in the lemma.
	 	\item  $\eta(x+(2,2)) \ = \ \Hor$\., \  $\eta(x+(2,1)) \ = \ \Ver$\.,  \. and \.  $\eta(x+(1,1)) \ = \ \Hor$\..
	 	In this case, let $w=y_1y_2y_3$ with 
	 	\[y_1 \ = \ x+(2,1), \qquad  y_2 \ = \ x+(1,1),\qquad  y_3 \ = \ x+(1,2)\.. \]
	 	Then $w$   has length 3 and satisfies the conditions in the lemma. 
	 	\item  $\eta(x+(2,2)) \ = \ \Hor$\., \  $\eta(x+(2,1)) \ = \ \Ver$\.,  \. and \.  $\eta(x+(1,1)) \ = \ \Ver$\..
	 	In this case, let $w=y_1y_2y_3y_4y_5$ with 
	 	\[y_1 \ = \ x+(2,1), \qquad  y_2 \ = \ x+(1,1),\qquad  y_3 \ = \ x+(2,1), \qquad y_4 \ = \ x+(2,2),  \qquad y_5 \ = \ x+(1,2)\.. \]
	 	Then $w$   has length 5 and satisfies the conditions in the lemma. 		
	 \end{enumerate}
	(Note that, in all four possibilities, the word $w$ involves only four vertices, namely  \. $x+(1,1)$\., \. $x+(1,2)$\., \. $x+(2,1)$\., \.  and \. $x+(2,2)$\..)
	This completes the proof of the lemma.
\end{proof}

\smallskip

We denote by $\Dc(x)$ the outer layer of  $\Ac(x)$, 
\[   \Dc(x) \ := \ \{ \. x +(i,j)  \. \in \. \Ac(x) \. \mid \.  i \in \{-2,2\} \ \text{ or } \  j \in \{-2,2\}    \.  \}\..   \]

\smallskip

\begin{lemma}\label{lemword 2}
	Let $y$ be an element of $\Dc(x)$, and let $\eta$ be a rotor configuration.
	Then there exists a word $w$ such that 
	\begin{itemize}
		\item $w$ is an admissible path for \. $(y,\eta)$ \. such that \. $y_{|w|}=y$ \. and \. $\eta_{|w|}(y)=\eta(y)$\.; and 
		\item $w \in \Ac(x) \cup \{x\}$ \. and \. $|w| \leq 58$; and
		\item The vertex $x$ occurs exactly once in $w$.
	\end{itemize} 
\end{lemma}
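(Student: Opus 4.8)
The plan is to assemble $w$ from a bounded number of admissible sub-paths, all confined to $\Ac(x)\cup\{x\}$, so that the walker enters $x$ exactly once and departs from $y$ an even number of times --- the latter being exactly the condition guaranteeing $\eta_{|w|}(y)=\eta(y)$, since the label at a vertex is altered only when the walker leaves it. Concretely I would proceed in five stages: (1) from $(y,\eta)$, route the walker inward to some neighbor $b\in\Ngh(x)$; (2) take one step from $b$ into $x$ --- this is the unique visit to $x$; (3) take one step from $x$ back out to a neighbor $b'\in\Ngh(x)$; (4) route the walker from $b'$ back to $y$; (5) if the walker has so far departed from $y$ an odd number of times, insert one short excursion that leaves $y$ and returns to it, to correct the parity. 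Stages (1) and (4) are carried out by chaining Lemma~\ref{lemword 1} along a shortest path in the graph induced on $\Ac(x)$; the only facts needed are that this graph (a $5\times5$ block minus its center) is connected, that it contains $\Ngh(x)$, and that every one of its vertices lies within graph-distance $3$ of $\Ngh(x)$ --- all verified by inspection of the $24$ vertices of $\Ac(x)$.

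Turning to the stages: the first step of stage (1), the departure from $y$, has its axis dictated by $\eta(y)$ but its sign free, and for every $y\in\Dc(x)$ and either axis at least one of the two moves lands inside $\Ac(x)\cup\{x\}$ (immediate, since $y$ lies in the outer ring of the block); thereafter I follow a shortest path $y=v_0,v_1,\dots,v_d=b$ in $\Ac(x)$ with $d\le3$ and realize each hop $v_{i-1}\to v_i$ by the length-$\le5$ admissible word from Lemma~\ref{lemword 1}. Since those words stay in $\Ac(x)$, the walker never touches $x$ during stage (1). For stage (2): if the current label at $b$ already permits the move $b\to x$ this is a single step, and otherwise I first flip that label by a bounded round trip from $b$ to a neighbor of $b$ inside $\Ac(x)$ and back, exactly as in the four-case analysis in the proof of Lemma~\ref{lemword 1}; this detour remains in $\Ac(x)$, so $x$ is reached only at the very end of stage (2). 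Stage (3) is possible because the forced departure axis from $x$ again leaves the sign free and one choice lands in $\Ngh(x)\subseteq\Ac(x)$. Stage (4) is stage (1) run in reverse, again via Lemma~\ref{lemword 1}, staying in $\Ac(x)$ and hence never revisiting $x$; here I would pick $b'$ to be the member of its antipodal pair of neighbors of $x$ that is closer to $y$, which keeps the return distance $\le3$ as well. The recurring point is that whenever a sign is free I choose it so the walker stays in $\Ac(x)$; such a choice always exists, and this is precisely what forces $x$ to occur exactly once in $w$.

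For stage (5): the number of departures from $y$ accumulated during stages (1)--(4) is some fixed integer, and inserting an excursion ``leave $y$, return to $y$'' (one forced step followed by a Lemma~\ref{lemword 1} word back to $y$, all inside $\Ac(x)$) toggles its parity at the cost of a bounded number of steps; so after at most one such insertion we have an even number of departures from $y$, that is $\eta_{|w|}(y)=\eta(y)$, together with $y_{|w|}=y$. The length bound is then a routine sum: stage (1) costs at most $1+5d\le16$, stages (2) and (3) at most about $8$ together, stage (4) at most $5d\le15$, and stage (5) at most $6$; allowing slack for the worst $y$ and $\eta$ keeps $|w|\le58$. The constant $58$ is not optimized --- it only needs to dominate the worst case over the $16$ vertices of $\Dc(x)$ and the finitely many relevant local label patterns.

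The hard part will be the simultaneous bookkeeping rather than any single estimate: one must keep the walker inside $\Ac(x)\cup\{x\}$ at every step --- which pins down the signs of all forced moves --- while ensuring that the one intentional entry into $x$ is the only visit to $x$ (so none of the Lemma~\ref{lemword 1} detours, nor the parity-correcting excursion, may stray onto $x$), and while controlling the parity of departures from $y$ so that its label is restored, all without exceeding the stated length. None of this is deep, but it is a somewhat delicate finite case analysis layered on top of Lemma~\ref{lemword 1}, and getting every stage to respect all three constraints at once is where the care is required.
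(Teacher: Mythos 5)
Your proposal is correct and follows essentially the same route as the paper's proof: the same decomposition into route-in via chained applications of Lemma~\ref{lemword 1} inside $\Ac(x)$, a single round trip to flip the label at a neighbor of $x$ if needed, one step into $x$ (the unique visit), one step out along the forced axis, a route back to $y$, and a final corrective round trip at $y$, with the total length bounded by summing the pieces. Your only real deviations are cosmetic --- phrasing the restoration of $\eta(y)$ as a parity-of-departures correction (equivalent here since $q=1$ forces every departure to flip the label) and choosing the exit neighbor of $x$ closer to $y$ to tighten the return leg --- so this matches the paper's argument.
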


\smallskip

\begin{proof}

	We define the word $w=w_1w_2\ldots w_{6}$ recursively as follows:
	\begin{enumerate}
		\item Let $w_1 \in \Ac(x)$ be  an admissible path for $(y,\eta)$ such that $y_{|w_1|}$ is a neighbor of $x$\..
		Such a word $w_1$ exists and \. $|w_1|\leq 15$ \.  by Lemma~\ref{lemword 1}\..
		\item Write \. $(y',\eta') \. := \.  (y_{|w_1|},\eta_{|w_1|})$\..
		Let $w_2$  be the empty word if the word $x$  is an admissible path for $(y',\eta')$.
		Otherwise, let $w_2 \in \Ac(x)$ be  an admissible path for
		$(y',\eta')$ such that $y'_{|w_2|} = y'$ and $y'$ occurs exactly once in $w_2$\..
		Such a word $w_2$ exists and \. $|w_2|\leq 1+5=6$ \.  by Lemma~\ref{lemword 1}\..
		\item Write \. $(y'',\eta'') \. := \.  (y'_{|w_2|},\eta'_{|w_2|})$\..
		It follows from the previous step that $w_3=x$ is an admissible path for $(y'',\eta'')$\., and that \. $y''_{|w_3|} =x$\..
		\item Write \. $(y^{(3)},\eta^{(3)}) \. := \.  (y''_{|w_3|},\eta''_{|w_3|})$\..
		Let $w_4$ be the word 
		\[ w_4 :=  \begin{cases}
			x+(1,0)  & \text{ if } \eta^{(3)}(x) \ = \  \Ver;\\
			x+(0,1)  & \text{ if } \eta^{(3)}(x) \ = \  \Hor.
			\end{cases}  \]
		 It follows from the definition that $w_4$ is an admissible path for  \. $(y^{(3)},\eta^{(3)})$\.,  and \. $y^{(3)}_{|w_4|}$ \. is neighbor of $x$, and hence is contained in $\Ac(x)$\..
		 \item Write \. $(y^{(4)},\eta^{(4)}) \. := \.  (y^{(3)}_{|w_4|},\eta^{(3)}_{|w_4|})$\..
		 Let $w_5 \in \Ac(x)$ be an admissible path for \. $(y^{(4)},\eta^{(4)})$ \.
		 such that \. $y^{(4)}_{|w_5|}= y$\..
		 Such a word $w_5$ exists and \. $|w_5|\leq 25$ \.  by Lemma~\ref{lemword 1}\..
		 		\item Write \. $(y^{(5)},\eta^{(5)}) \. := \.  (y^{(4)}_{|w_5|},\eta^{(4)}_{|w_5|})$\..
		 Let $w_6$  be the empty word if  for \. $\eta^{(5)}(y)=\eta(y)$\..
		 Otherwise, let $w_6 \in \Ac(x)$ be  an admissible path for
		 $(y^{(5)},\eta^{(5)})$  such that $y^{(5)}_{|w_6|} = y$ and $y$ occurs exactly once in $w_6$\..
		 Such a word $w_6$ exists and \. $|w_6|\leq 10$ \.  by Lemma~\ref{lemword 1}\..
		 It follows from the definition that \.  $y^{(5)}_{|w_6|}=y$ \. and \.   $\eta^{(5)}_{|w_6|}(y)=\eta(y)$\..
	\end{enumerate}
	See Figure~\ref{figword} for an illustration of the construction of $w$.
	
\begin{figure}[hbt]
	\centering
	\begin{tabular}{c@{\hskip 0.3in}c @{\hskip 0.3in} c  @{\hskip 0.3in} c}
		\includegraphics[width=0.2\linewidth]{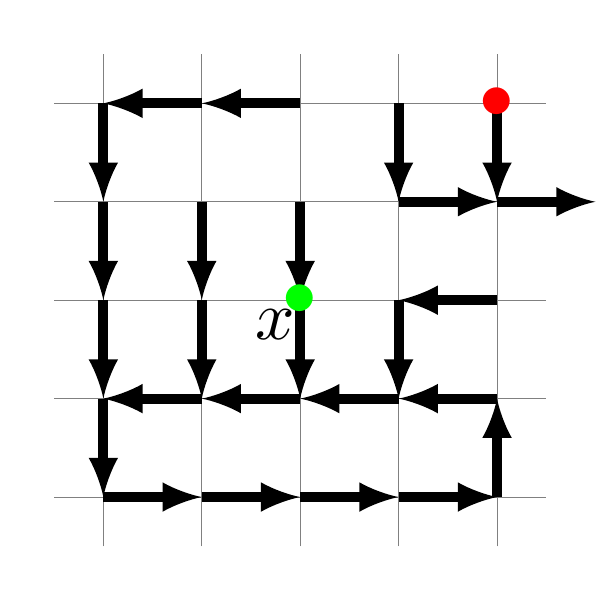}  & 
		\includegraphics[width=0.2\linewidth]{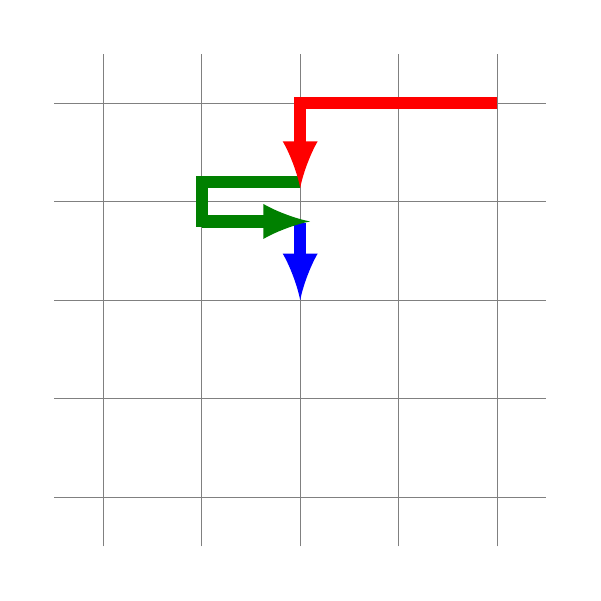}  & 
		\includegraphics[width=0.2\linewidth]{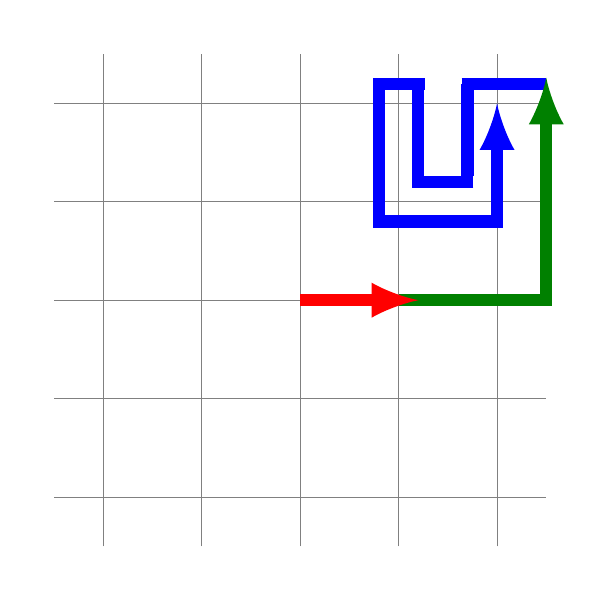}  & 
		\includegraphics[width=0.2\linewidth]{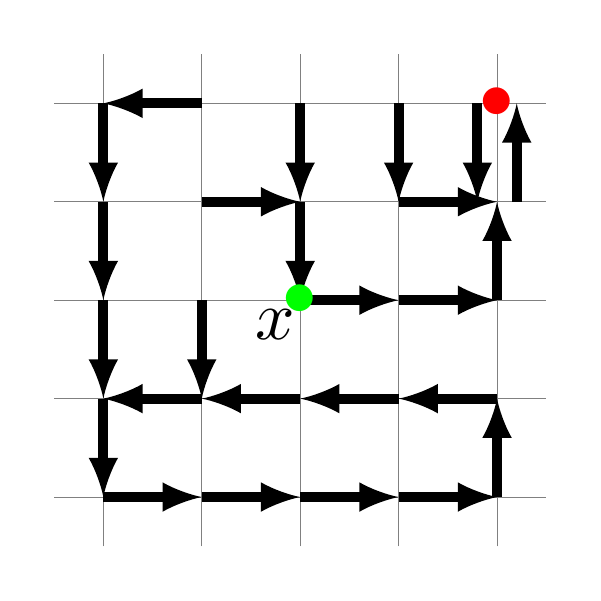}  \\
		(a) & (b) & (c) & (d)
	\end{tabular}
	\caption{(a) The pair $(y,\eta)$, with $y$ given by the red bullet and with $\eta$ restricted to $\Ac(x) \cup \{x\}$.
		(b) The trajectory of the word $w_1$ (in red), $w_2$ (in green), and $w_3$ (in blue).
		(c) The trajectory of the word $w_4$ (in red), $w_5$ (in green), and $w_6$ (in blue).
		(d) The final pair $(y_{|w|},\eta_{|w|})$. Note that $\eta_{|w|}(y) = \eta(y)$ and 
		$\eta_{|w|}(x) \neq \eta(x)$ as required by Lemma~\ref{lemword 2}.
			}
	\label{figword}
\end{figure}

	Now note that $w=w_1\ldots w_6$ is an admissible path for $(y,\eta)$ since it is a concatenation of admissible paths.
	Also note that 
	\[  y_{|w|} \ = \ y^{(5)}_{|w_6|} \ = \ y\.; \qquad \text{ and } \qquad  \eta_{|w|}(y) \ = \ \eta^{(5)}_{|w_6|}(y)  \ = \ \eta(y)\.. \]
	Furthermore note that $w \in \Ac(x) \cup \{x\}$ by construction, and $x$ occurs exactly once in $w$ (namely as $w_3$).
	Finally note that the length of $w$ satisfies 
	\[ |w| \quad = \quad \sum_{i=1}^{6} |w_1| \quad \leq \quad 15 \.+\. 6 \. + \. 1 \. + \. 1 \.+ \. 25 \. + \. 10 \quad = \quad 58\..   \] 
	This proves the lemma. 
\end{proof}

\smallskip

Let $\rho$ be an arbitrary rotor configuration.
We denote by \. $J(\rho):= J(\rho,k,r)$ \. the set of admissible paths $w=x_1\ldots x_m$ for $((0,0),\rho)$, such that, exactly one of the following scenarios occur:
\begin{equation}\label{eqTau}\tag{Tau}
\begin{split}
	&\text{$x_m=(0,0)$, and $(0,0)$ occurs in $w$ exactly $k$ times; or}\\
	&\text{$x_m \in \partial B_r$, and all other vertices in $w$ are not contained in $\partial B_r$\..}
\end{split}
\end{equation}
Described in words, \.  $J(\rho)$ \. consists of words that record the feasible locations of the walker until the hitting time $\tau$,  for the $\Hor$--$\Ver$ walk with initialization $((0,0),\rho)$. 

We denote by  \. $J_0(\rho,x):= J_0(\rho,x,k,r)$ \. the set of words $w$ in $J(\rho)$ such that $x$ never occurs in $w$,
and by   \. $J_1(\rho,x):= J_1(\rho,x,k,r)$ \. the set of words $w$ in $J(\rho)$ such that $x$ occurs at least once in $w$.
We now define the map \. $\phi:J_1(\rho,x) \to J_1(\rho,x)$ \. as follows:
\begin{itemize}
	\item Let \. $w = y_1\ldots y_m \in J_1(\rho,x)$\..
	Since $x$ occurs at least once in $w$, it follows  that some vertex in $\Dc(x)$ also occurs at least once in $w$.
	Let $\ell:=\ell(w)$ be the largest integer such that $y_{\ell} \in \Dc(x)$\..
	Note that \. $\{y_{\ell+1}, \ldots, y_m\} \ts \cap \ts (\Ac(x) \cup \{x\}) =\varnothing$ \. and \. $x \in B_{r-3} \setminus B_3$.
	\item 
	We define \. $w_1 \. := \. y_1y_2\ldots y_{\ell}$ \. and \. $w_3 \. := \. y_{\ell+1}y_{\ell+2}\ldots y_{m}$\.. 
		\item Let \. $(y',\eta'):= (y_\ell,\eta_\ell)$ \. (with respect to the word $w_1$ and  initialization \. $(y_0,\eta_0)=((0,0),\rho)$)\..
		Note that $y' \in \Dc(x)$ by definition.
		We define $w_2$ to be an admissible path for $(y',\eta')$ such that \. $y'_{|w_2|}=y'$ \. and \. $\eta'_{|w_2|}(y')=\eta'(y')$\..
		We also require that \. $w \in \Ac(x) \cup \{x\}$ \. and that $x$ occurs exactly once in $w$.
		Such a word $w_2$ exists and $|w_2| \leq 58$ by Lemma~\ref{lemword 2}\..
	\item We define $\phi(w) := w_1 w_2 w_3$\..
\end{itemize}
\smallskip

\begin{lemma}\label{lemphi well defined}
	For every \. $w \in J_1(\rho,x)$\., we have that $\phi(w)$ is contained in $J_1(\rho,x)$.
	Furthermore, x occurs in $\phi(w)$ exactly one more time than in $w$, and \. $|\phi(w)| \. \leq \. |w| + 58$\..
\end{lemma}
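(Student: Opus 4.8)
The plan is to check the three assertions separately; the only substantial point is that $\phi(w)$ is again an admissible path for $((0,0),\rho)$ satisfying \eqref{eqTau}, so that $\phi(w)\in J(\rho)$. I would dispose first of the bookkeeping. Since $w_1=y_1\cdots y_\ell$ and $w_3=y_{\ell+1}\cdots y_m$, we have $w=w_1 w_3$, and $\phi(w)=w_1 w_2 w_3$ is literally $w$ with the block $w_2$ spliced in after position $\ell$. Hence $|\phi(w)|=|w|+|w_2|\le |w|+58$ by Lemma~\ref{lemword 2}, and the number of occurrences of $x$ in $\phi(w)$ equals its number of occurrences in $w$ plus the number in $w_2$, which is exactly one by Lemma~\ref{lemword 2}; in particular $x$ occurs in $\phi(w)$, so once I know $\phi(w)\in J(\rho)$ I get $\phi(w)\in J_1(\rho,x)$.

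For admissibility I would follow the trajectory of $\phi(w)$ in three blocks. Over $w_1$ it agrees with the trajectory of $w$ and ends in the pair $(y',\eta')=(y_\ell,\eta_\ell)$; by construction $w_2$ is an admissible continuation from $(y',\eta')$, so $w_1 w_2$ is admissible, the walker is back at $y'$, and the resulting rotor configuration $\eta^\ast$ agrees with $\eta_\ell$ off the vertices visited by $w_2$ — all of which lie in $\Ac(x)\cup\{x\}$ — and moreover $\eta^\ast(y')=\eta'(y')$ by Lemma~\ref{lemword 2}. The crux is then that $w_3$ remains an admissible continuation from $(y',\eta^\ast)$: by the maximality of $\ell$, the vertices $y_{\ell+1},\dots,y_m$ all avoid $\Ac(x)\cup\{x\}$, and every step of the $w_3$-block (including the first step, taken from $y_\ell=y'$) reads and then updates only the rotor at a vertex of the set $S:=\{y'\}\cup(\Zb^2\setminus(\Ac(x)\cup\{x\}))$. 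A short induction on $t$ shows that after executing $w_1 w_2$ and the first $t$ letters of $w_3$ the walker sits at $y_{\ell+t}$ and the current rotor configuration agrees with $\eta_{\ell+t}$ on $S$; since the admissibility of each step of $w_3$ depends only on the rotor at the current vertex, which lies in $S$, this yields that $w_1 w_2 w_3$ is admissible and its trajectory ends at $y_m$. I expect this induction — pinning down exactly which rotors $w_2$ is permitted to disturb, and checking that $w_3$ never consults any of them — to be the one place that needs care.

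It remains to see that $\phi(w)$ satisfies \eqref{eqTau}. As $\phi(w)$ ends at $y_m$ and $w$ already satisfies \eqref{eqTau}, it is enough to observe that splicing in $w_2$ changes neither the number of visits to $(0,0)$ nor the occurrence of a vertex of $\partial B_r$ among the non-terminal letters. Both are immediate from $w_2\in\Ac(x)\cup\{x\}$ together with the two containments recorded right after the definition of $\Ac(x)$: $(0,0)\notin\Ac(x)\cup\{x\}$ since $x\notin B_3$, and $(\Ac(x)\cup\{x\})\cap\partial B_r=\varnothing$ since $\Ac(x)\cup\{x\}\subseteq B_r$ as $x\in B_{r-3}$. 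Thus $\phi(w)\in J(\rho)$, and combined with the bookkeeping above, $\phi(w)\in J_1(\rho,x)$, completing the proof.
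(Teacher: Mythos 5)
Your proof is correct and follows essentially the same route as the paper: decompose $\phi(w)=w_1w_2w_3$, use the properties of $w_2$ from Lemma~\ref{lemword 2} (stays in $\Ac(x)\cup\{x\}$, restores the rotor at $y'$, visits $x$ exactly once) to get admissibility of the concatenation since $w_3$ avoids $\Ac(x)\cup\{x\}$ by maximality of $\ell$, then verify \eqref{eqTau} because $\Ac(x)\cup\{x\}$ meets neither the origin nor $\partial B_r$, and finish with the length and occurrence counts. Your explicit induction over the set $S=\{y'\}\cup(\Zb^2\setminus(\Ac(x)\cup\{x\}))$ just spells out, a bit more carefully than the paper does, why the first step of $w_3$ (which departs from $y'\in\Ac(x)$) is still legal, namely via the rotor-restoration $\eta'_{|w_2|}(y')=\eta'(y')$.
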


\smallskip

\begin{proof}
	We first show that $\phi(w)$ is an admissible path for $((0,0),\rho)$\..
	First we have  $w_1$ is an admissible path for $((0,0),\rho)$ by definition,
	and $w_2$  is an admissible path for   \. $(y',\eta')$\..
	We write \. $(y'',\eta''):= (y'_{|w_2|}, \eta'_{|w_2|})$\..
	Now note that \. $y''$ is equal to \. $y'$\.,  and \. $\eta'$ agrees with $\eta''$ on every vertex outside of \. $\Ac(x) \cup \{x \}$\..
	Also note that $w_3$ is an admissible path for $(y',\eta')$, and none of the vertices in \. $\Ac(x) \cup \{x \}$ \. occurs in $w_3$ (by the maximality of $\ell$).
	It then follows from these two observations that $w_3$ is  an admissible path for \. $(y'',\eta'')$\..
	Hence \. $\phi(w) = w_1 w_2 w_3$ \. is a concatenation of  admissible paths,
	and we conclude $\phi(w)$ is an admissible path for $((0,0),\rho)$\..
	
	We now show that $\phi(w)$ satisfies \eqref{eqTau}.
	Indeed, note that \. $\Ac(x) \cup\{x\}$ \. contains neither the origin nor vertices from $\partial B_r$\., since $x \in B_{r-3} \setminus B_3$.
	Also note that \. $w=w_1w_3$ \. satisfies \eqref{eqTau} by assumption.
	It then follows from these two observations that \. $\phi(w)=w_1w_2w_3$ \. satisfies \eqref{eqTau}.
	
	Now  
	note that, $x$ occurs in $\phi(w)=w_1w_2w_3$ exactly one more time than in $w=w_1w_3$, since, $x$ occurs exactly once in $w_2$.
	Also note that
	\[  |\phi(w) |  \quad = \quad  |w_1|+ |w_2| + |w_3| \quad \leq \quad  |w_1|+ 58 + |w_3|  \quad = \quad |w|+58\..\]
	This completes the proof of the lemma. 
\end{proof}

\smallskip

\begin{lemma}\label{lemphi preimage}
	Let \. $w \in \phi\big(J_1(x,\rho)\big)$\..
	Then the preimage of $w$ under $\phi$ contains at most 58 elements.
\end{lemma}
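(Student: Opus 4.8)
The plan is to reconstruct any preimage of a given word $v=z_1\cdots z_m\in\phi\big(J_1(\rho,x)\big)$ from $v$ alone, leaving at most $58$ possibilities. The marker I would use is the index $\ell=\ell(v)$, defined as the largest $j$ with $z_j\in\Dc(x)$; this is well defined because $v\in J_1(\rho,x)$ by Lemma~\ref{lemphi well defined}, so $v$ is an admissible path from the origin that visits $x$, and a nearest-neighbor path from the origin to $x$ must cross $\Dc(x)$ (the origin lies outside $\Ac(x)\cup\{x\}$ since $x\in B_{r-3}\setminus B_3$). The claim is that for every preimage $w$ with $\phi(w)=v$, writing $w=w_1w_3$ and $\phi(w)=w_1w_2w_3$ as in the definition of $\phi$, one has $|w_1|+|w_2|=\ell$; consequently $w_3=z_{\ell+1}\cdots z_m$ is the \emph{same} suffix of $v$ for every preimage, while $w_1$ is the prefix $z_1\cdots z_{|w_1|}$ of $v$ whose length $|w_1|=\ell-|w_2|$ lies in $\{\ell-58,\dots,\ell-1\}$.

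To establish $|w_1|+|w_2|=\ell$ I would invoke the two bookkeeping properties recorded when $\phi$ was defined. By Lemma~\ref{lemword 2}, $w_2$ starts and ends at the common endpoint $y'\in\Dc(x)$ of $w_1$, satisfies $w_2\subseteq\Ac(x)\cup\{x\}$, and has $1\le|w_2|\le 58$; in particular the letter of $v$ in position $|w_1|+|w_2|$ equals $y'\in\Dc(x)$, so $\ell\ge|w_1|+|w_2|$. Conversely, by the maximality built into the choice of $\ell(w)$ in the definition of $\phi$, the suffix $w_3$ contains no vertex of $\Ac(x)\cup\{x\}$, hence none of $\Dc(x)$, so every $\Dc(x)$-letter of $v=w_1w_2w_3$ sits in a position at most $|w_1|+|w_2|$, giving $\ell\le|w_1|+|w_2|$. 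Hence $\ell=|w_1|+|w_2|$, and $1\le|w_2|\le 58$ forces $|w_1|\in\{\ell-58,\dots,\ell-1\}$, a set of at most $58$ integers.

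It remains to note that the assignment $w\mapsto|w_1|$ from the preimages of $v$ to this set is injective: if $w$ and $\widetilde w$ are preimages with $|w_1|=|\widetilde w_1|$, then both satisfy $w_1=\widetilde w_1=z_1\cdots z_{|w_1|}$ and $w_3=\widetilde w_3=z_{\ell+1}\cdots z_m$, whence $w=w_1w_3=\widetilde w_1\widetilde w_3=\widetilde w$. Therefore $v$ has at most $58$ preimages under $\phi$. The only delicate point in this argument is that $\ell$ is intrinsic to $v$ and marks exactly the junction between $w_2$ and $w_3$ for \emph{every} preimage simultaneously; this is precisely what the properties $w_2\subseteq\Ac(x)\cup\{x\}$ with terminal letter in $\Dc(x)$ and $w_3\cap(\Ac(x)\cup\{x\})=\varnothing$ provide, so no new estimate beyond Lemmas~\ref{lemword 2} and~\ref{lemphi well defined} is required.
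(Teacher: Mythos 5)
Your argument is correct and follows essentially the same route as the paper: both identify the last $\Dc(x)$-position $\ell$ of the image word and observe that every preimage is obtained by deleting a block of length $i\in\{1,\ldots,58\}$ ending at position $\ell$, leaving at most $58$ candidates. You merely spell out what the paper compresses into ``it follows from the construction of $\phi$'', namely that the endpoint of $w_2$ lies in $\Dc(x)$ while $w_3$ avoids $\Dc(x)$, so the junction sits exactly at $\ell$ for every preimage.
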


\smallskip

\begin{proof}
	Let $w= x_1\ldots x_m $, and let $\ell$ be the largest integer such that \. $x_\ell \in \Dc(x)$\..
	Let $w'$ be an arbitrary element of the preimage of $w$ under $\phi$.
	It then follows from the construction of $\phi$ that 
	\[w' = x_1 \ldots x_{\ell-i} \  x_{\ell+1}\ldots x_m \qquad \text{ for some } i \in \{1,\ldots, 58\}\.. \]
	It then follows that the preimage of $w$ under $\phi$ contains at most 58 elements, as desired.
\end{proof}

\smallskip

\smallskip

We are now ready to present the proof of Lemma~\ref{lemRadon Nikodym 2}.

\smallskip

\begin{proof}[Proof of Lemma~\ref{lemRadon Nikodym 2} ]
	Note that we have
	\begin{align}\label{eqecho 1}
	p_{even}(\rho,x) \quad & = \quad  \sum_{\substack{w \in J_1(\rho,x);\\ K_x(w) \. \equiv \. 0 \text{ mod } 2}}  2^{-|w|};
	\qquad 
	p_{odd}(\rho,x) \quad  = \quad  \sum_{\substack{w \in J_1(\rho,x);\\ K_x(w) \. \equiv \. 1 \text{ mod } 2}}  2^{-|w|}\.,
	\end{align}
	where $K_x(w)$ is the number of visits to $x$ by the walk with admissible path $w$.
	Now  note that,  by Lemma~\ref{lemphi well defined},
	\begin{equation}\label{eqecho 2}
		p_{even}(\rho,x) \quad \leq \quad  2^{58}\sum_{\substack{w \in J_1(\rho,x);\\ K_x(w) \. \equiv \. 0 \text{ mod } 2}}  2^{-|\phi(w)|}\..
	\end{equation}

	On the other hand, again by Lemma~\ref{lemphi well defined},
	\begin{equation}\label{eqecho 3}
		\{\. \phi(w) \. \mid \.   w \in J_1(\rho,x)\.; K_x(w) \. \equiv \. 0 \text{ mod } 2 \. \}	 \ \subseteq \
		\{\.  w  \in J_1(\rho,x) \. \mid \.    K_x(w) \. \equiv \. 1 \text{ mod } 2 \.\}\.,
	\end{equation}
	and furthermore each element in the right side of \eqref{eqecho 3} appears at most 58 times in the left side of \eqref{eqecho 3} by Lemma~\ref{lemphi preimage}. 
	Plugging \eqref{eqecho 3} into \eqref{eqecho 2}, 
	we get 
	\[ 	p_{even}(\rho,x)  \quad \leq \quad  58\. 2^{58} \. \sum_{\substack{w \in J_1(\rho,x);\\ K_x(w) \. \equiv \. 1 \text{ mod } 2}}  2^{-|w|} \quad = \quad 58\. 2^{58} \. p_{odd}(\rho,x)\.. \]
	By an analogous argument we also have \. $p_{odd}(\rho,x) \ \leq \ 58\. 2^{58} \. p_{even}(\rho,x)$\., and the lemma now follows.
\end{proof}

\medskip

\subsection{Proof of Lemma~\ref{lemRadon Nikodym}}

We will first prove the following lemma.

\smallskip

\begin{lemma}\label{lemRadon Nikodym 3}
	Let $r \geq 6$. Then for every \. $x \. \in \.   B_{r-3} \setminus B_3$,
	\begin{equation*}
	\big|\Eb_{\rho \sim \IUD} \. \Pb\big[\zeta_{\tau}(x) = \Hor \big] \ - \ 	\Eb_{\rho \sim \IUD} \. \Pb\big[\zeta_{\tau}(x) = \Ver \big]\big| \quad \leq \quad \left(1-\frac{1}{58 \. 2^{58}}\right)\..
	\end{equation*} 
\end{lemma}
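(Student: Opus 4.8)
The key feature of the case $q=1$ is that \emph{every} visit of the walker to a vertex flips that vertex's label. Fix $x\in B_{r-3}\setminus B_3$; then $x\notin\{(0,0)\}\cup\partial B_r$, so at the terminal time $\tau=\tau_{k,r}(\rho)$ the walker sits at $(0,0)$ or on $\partial B_r$ and in particular is never located at $x$ there. Consequently the flip produced by the \emph{last} visit to $x$ has already been recorded by time $\tau$, and for every fixed $\rho$ the label $\zeta_\tau(x)$ equals $\rho(x)$ if $x$ is visited an even number of times in $\{1,\dots,\tau\}$ (including zero) and equals the opposite label if it is visited an odd number of times. An odd number of visits is automatically positive, so the probability that $\zeta_\tau(x)$ is the label opposite to $\rho(x)$ equals exactly $p_{odd}(\rho,x)$, whence
\[
\Pb\big[\zeta_\tau(x)=\Hor\big]-\Pb\big[\zeta_\tau(x)=\Ver\big]\;=\;\pm\big(1-2\,p_{odd}(\rho,x)\big),
\]
the sign being $+$ when $\rho(x)=\Hor$ and $-$ when $\rho(x)=\Ver$.

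Next I would condition on the restriction $\rho'$ of $\rho$ to $\Zb^2\setminus\{x\}$. Under $\IUD$ the coordinate $\rho(x)$ is $\Hor$ or $\Ver$ with probability $\tfrac12$ each, independently of $\rho'$, so averaging the identity above over $\rho(x)$ makes the constant terms cancel:
\[
\Eb_{\rho\sim\IUD}\Pb\big[\zeta_\tau(x)=\Hor\big]-\Eb_{\rho\sim\IUD}\Pb\big[\zeta_\tau(x)=\Ver\big]\;=\;\Eb_{\rho'}\big[\,p_{odd}((\rho',\Ver),x)-p_{odd}((\rho',\Hor),x)\,\big].
\]
This cancellation is the conceptual crux: for an \emph{individual} $\rho$ the left-hand difference can be as large as $1$, and it is only the averaging over $\rho(x)$ that produces any gain. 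By the triangle inequality it therefore suffices to show that $\big|p_{odd}((\rho',\Ver),x)-p_{odd}((\rho',\Hor),x)\big|\le 1-\tfrac{1}{58\cdot 2^{58}}$ for every $\rho'$.

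To bound this I would first observe that $p_{even}(\rho,x)+p_{odd}(\rho,x)$ is the probability $P$ that $x$ is visited at all before $\tau$, and that $P$ does \emph{not} depend on $\rho(x)$: running the frozen walk with the same source of randomness up to $\min(\tau,\text{first hitting time of }x)$ never consults $\rho(x)$, and $\{x\text{ visited before }\tau\}$ is measurable with respect to that stopped trajectory (here again one uses $x\notin\{(0,0)\}\cup\partial B_r$, so the walker is never at $x$ exactly at time $\tau$ and the freezing on $\partial B_r$ is irrelevant to the argument). Hence $P=P(\rho')\le 1$ is common to $(\rho',\Hor)$ and $(\rho',\Ver)$. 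Lemma~\ref{lemRadon Nikodym 2}, together with $p_{even}+p_{odd}=P$, then pins $p_{odd}(\rho,x)$ into the interval $\big[\tfrac{P}{58\cdot 2^{58}+1},\ \tfrac{58\cdot 2^{58}}{58\cdot 2^{58}+1}P\big]$ for both choices $\rho=(\rho',\Hor)$ and $\rho=(\rho',\Ver)$ (when $P=0$ both probabilities vanish and there is nothing to prove). Two numbers lying in an interval of length $\tfrac{(58\cdot 2^{58}-1)P}{58\cdot 2^{58}+1}\le\tfrac{58\cdot 2^{58}-1}{58\cdot 2^{58}+1}\le 1-\tfrac{1}{58\cdot 2^{58}}$ differ by at most that amount; taking $\Eb_{\rho'}$ finishes the proof.

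\textbf{Main obstacle.} The delicate parts are the two "bookkeeping" claims underpinning the argument: that the last flip of $x$ is recorded by time $\tau$ (i.e.\ that the walker is never at $x$ at time $\tau$, which is what forces $x\in B_{r-3}\setminus B_3$ to be used), and the coupling claim that $P$ is independent of $\rho(x)$, which must be stated carefully so that the freezing mechanism and the "first return $k$ times to the origin" stopping rule do not interfere. The quantitative estimate in the last paragraph is the only place Lemma~\ref{lemRadon Nikodym 2} is invoked, and the interval bound is calibrated precisely so that its factor $58\cdot 2^{58}$ reproduces the stated constant $1-\tfrac{1}{58\cdot 2^{58}}$.
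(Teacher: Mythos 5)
Your proposal is correct and follows essentially the same route as the paper's proof: the parity-of-visits decomposition available because $q=1$ flips the label on every departure, the observation that the probability of visiting $x$ before $\tau$ does not depend on $\rho(x)$, averaging over the independent $\IUD$ coordinate at $x$, and Lemma~\ref{lemRadon Nikodym 2} as the anti-concentration input. The only difference is bookkeeping: you condition on $\rho$ off $x$ and sandwich both values of $p_{odd}$ in the interval $\big[\tfrac{P}{c+1},\tfrac{c}{c+1}P\big]$ with $c=58\cdot 2^{58}$, whereas the paper manipulates the two conditional expressions for $\Pb[\zeta_\tau(x)=\Hor]$ and $\Pb[\zeta_\tau(x)=\Ver]$ directly; the two computations are equivalent and give the same constant.
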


\begin{proof}
	For every rotor configuration $\rho$, we write 
	\[p_0(\rho,x) \quad := \quad \sum_{w \in J_0(\rho,x)} 2^{-|w|}\.,  \]
	the probability that the  $\Hor$--$\Ver$ walk  with initial rotor configuration $\rho$ never visits $x$  before terminating (at either the origin or $\partial B_r$).
	In particular, $p_{0}(\rho,x)$ does not depend on the rotor $\rho(x)$ at $x$ since the walker never visits $x$.
	Now note that,
	\begin{equation*}
	\begin{split}
		\Pb\big[\zeta_{\tau}(x)  = \Hor \big] \quad = \quad  & \satu\{\rho(x) \. = \. \Hc  \}  \, p_o(\rho,x) \ + \ \satu\{\rho(x) \. = \. \Hor  \} \, p_{even}(\rho,x) \\& + \quad \satu\{\rho(x) \. = \. \Ver  \} \, p_{odd}(\rho,x)\.. 
	\end{split}
	\end{equation*}
	On the other hand, since $p_{o}(\rho,x)$ does not depend on $\rho(x)$,
	\begin{align*}
		 \Eb_{\rho \sim \IUD} \big[ \satu\{\rho(x) \. = \. \Hor  \}  \, p_o(\rho,x) \big]  \quad & = \quad    \Eb_{\rho \sim \IUD} \big[ \satu\{\rho(x) \. = \. \Hor  \}\big] \, \Eb_{\rho \sim \IUD} \big[ p_o(\rho,x) \big] \\
		 & = \quad \frac{1}{2} \. p_o(\rho,x)\..
	\end{align*}

	Combining the two observations above, we get
	\begin{equation}\label{eqfoxtrot 1}
	\begin{split}
		\Eb_{\rho \sim \IUD} \. \Pb\big[\zeta_{\tau}(x) = \Hor \big] \quad = \quad & \frac{1}{2} \. p_o(\rho,x) 
		\ + \ \Eb_{\rho \sim \IUD} \big[ \satu\{\rho(x) \. = \. \Hor  \} \, p_{even}(\rho,x) \big]\\
		&  + \ \Eb_{\rho \sim \IUD} \big[ \satu\{\rho(x) \. = \. \Ver  \} \, p_{odd}(\rho,x) \big]\..
	\end{split}
	\end{equation}
	By an analogous argument, we have
		\begin{equation}\label{eqfoxtrot 2}
	\begin{split}
	\Eb_{\rho \sim \IUD} \. \Pb\big[\zeta_{\tau}(x) = \Ver \big] \quad = \quad & \frac{1}{2} \. p_o(\rho,x) 
	\ + \ \Eb_{\rho \sim \IUD} \big[ \satu\{\rho(x) \. = \. \Hor  \} \, p_{odd}(\rho,x) \big]\\
	&  + \ \Eb_{\rho \sim \IUD} \big[ \satu\{\rho(x) \. = \. \Ver  \} \, p_{even}(\rho,x) \big]\..
	\end{split}
	\end{equation}
	
	We now apply Lemma~\ref{lemRadon Nikodym 2} to \eqref{eqfoxtrot 1}, and we get
	\begin{equation}\label{eqfoxtrot 3}
	\begin{split}
		\Eb_{\rho \sim \IUD} \. \Pb\big[\zeta_{\tau}(x) = \Hor \big] \quad \geq  \quad & \frac{1}{2} \. p_o(\rho,x) 
		\ + \ \frac{1}{58 \. 2^{58}}\. \Eb_{\rho \sim \IUD} \big[ \satu\{\rho(x) \. = \. \Hor  \} \, p_{odd}(\rho,x) \big]\\
		&  + \ \frac{1}{58 \. 2^{58}}\.  \Eb_{\rho \sim \IUD} \big[ \satu\{\rho(x) \. = \. \Ver  \} \, p_{even}(\rho,x) \big]\..
	\end{split}
	\end{equation}
	Taking the difference between \eqref{eqfoxtrot 2} and \eqref{eqfoxtrot 3}, 
	we get
	\begin{equation}\label{eqfoxtrot 4}
	\begin{split}
			& \Eb_{\rho \sim \IUD} \. \Pb\big[\zeta_{\tau}(x) = \Ver \big] \ - \ 	\Eb_{\rho \sim \IUD} \. \Pb\big[\zeta_{\tau}(x) = \Hor \big] \\ 
			& \leq \left(1-\frac{1}{58 \. 2^{58}}\right)  \. \bigg(  \Eb_{\rho \sim \IUD} \big[ \satu\{\rho(x) \. = \. \Hor  \} \, p_{odd}(\rho,x) \big] \\
			&  \phantom{ \leq \left(1-\frac{1}{58 \. 2^{58}}\right)  \. \bigg(    } \ + \ \Eb_{\rho \sim \IUD} \big[ \satu\{\rho(x) \. = \. \Ver  \} \, p_{even}(\rho,x) \big]   \bigg)\..
	\end{split}
	\end{equation}
	Noting that the last term in \eqref{eqfoxtrot 4} is less than the right side of \eqref{eqfoxtrot 2}, we get 
	\begin{equation*}
	\begin{split}
		& \Eb_{\rho \sim \IUD} \. \Pb\big[\zeta_{\tau}(x) = \Ver \big] \ - \ 	\Eb_{\rho \sim \IUD} \. \Pb\big[\zeta_{\tau}(x) = \Hor \big] \quad  \leq  \quad
		\left(1-\frac{1}{58 \. 2^{58}}\right)  \. 	\Eb_{\rho \sim \IUD} \. \Pb\big[\zeta_{\tau}(x) = \Ver \big]\..
	\end{split}
	\end{equation*}
	Noting that \. $\Pb\big[\zeta_{\tau}(x) = \Ver \big]$ \. is at most equal to 1, we
	then have 
	\begin{equation}\label{eqfoxtrot 5}
	 \Eb_{\rho \sim \IUD} \. \Pb\big[\zeta_{\tau}(x) = \Ver \big] \ - \ 	\Eb_{\rho \sim \IUD} \. \Pb\big[\zeta_{\tau}(x) = \Hor \big] \quad \leq \quad \left(1-\frac{1}{58 \. 2^{58}}\right)\..
	\end{equation} 
	By an analogous argument, we also have
		\begin{equation}\label{eqfoxtrot 6}
	\Eb_{\rho \sim \IUD} \. \Pb\big[\zeta_{\tau}(x) = \Hor \big] \ - \ 	\Eb_{\rho \sim \IUD} \. \Pb\big[\zeta_{\tau}(x) = \Ver \big] \quad \leq \quad \left(1-\frac{1}{58 \. 2^{58}}\right)\..
	\end{equation} 
	The lemma now follows by combining~\eqref{eqfoxtrot 5} and \eqref{eqfoxtrot 6}.
	\end{proof}
	
	\smallskip
	
	\begin{proof}[Proof of Lemma~\ref{lemRadon Nikodym}]
	We have by the definition of the weight function that
	\begin{align*}
	\Eb_{\rho \sim \IUD} \. \Eb \big[\wt\big(x,\tau(\rho) \big)\big]
	\quad = \quad 	\Eb_{\rho \sim \IUD} \. \Pb\big[\zeta_{\tau}(x) = \Hor \big]\, 
	\ft_{\Hor}(x)  \ + \ \Eb_{\rho \sim \IUD} \. \Pb\big[\zeta_{\tau}(x) = \Ver \big]\, 
	\ft_{\Ver}(x)\..  
	\end{align*}
	It then follows from \eqref{equation: weight harmonic} and \eqref{eqweight max definition} that 
	\begin{align*}
	 \big|	\Eb_{\rho \sim \IUD} \. \Eb \big[\wt\big(x,\tau(\rho) \big)\big]  \big|
	\quad   = \quad & \big|\Eb_{\rho \sim \IUD} \. \Pb\big[\zeta_{\tau}(x) = \Hor \big] \ - \ 	\Eb_{\rho \sim \IUD} \. \Pb\big[\zeta_{\tau}(x) = \Ver \big]\big| |\ft(x)|\..
	\end{align*}
	An application of Lemma~\ref{lemRadon Nikodym 3} to the last equation yields \eqref{eqRadon Nikodym}, as desired.
\end{proof}

\smallskip

%

\begin{proof}[Proof of Proposition~\ref{proprecurrence IID critical}]	
	We apply the same calculations as in the proof of Proposition~\ref{proprecurrence IID subcritical} but with 
	\eqref{eqRadon Nikodym} substituting
	\eqref{eqweight upper bound crude} (with details omitted for brevity), and  we get
	\[ \Eb_{\rho \sim \IUD} \big[p_{\rec}(\rho)  \big]  \quad \geq  \quad \frac{1}{58 \. 2^{58}}  \quad > \quad 0\..
	\]
	It now follows from Proposition~\ref{proposition: zero-one law IUD} that 
	\. $p_{\rec}(\rho) \. = \. 1$  \. for almost every rotor configuration $\rho$ sampled from $\IUD$, as desired.
\end{proof}

\smallskip

\begin{proof}[Proof of Theorem~\ref{thmrecurrence IID}]
	The theorem follows from combining Proposition~\ref{proprecurrence IID subcritical} (for the case $q<1$) and Proposition~\ref{proprecurrence IID critical} (for the case $q=1$).
\end{proof}

\bigskip

\section{Concluding remarks}\label{secconcluding remarks}


\subsection{Recurrence of other  rotor configurations}\label{subsec existence of recurrent configurations}
The techniques used in this paper is quite robust to changes of the initial rotor configuration, and in some cases one can even get a stronger result for specific rotor configurations, e.g.,
\begin{itemize}
	\item For $q\in [\frac{1}{2},1]$, the following rotor configuration is recurrent:
	\[ \rho_{\text{Box}}(x)  \quad := \quad 
		\begin{cases}
		x+(0,1) & \text{ if }   \arg(x) \in \big(-\frac{\pi}{4}, \frac{\pi}{4}\big];\\
		x-(1,0) & \text{ if }   \arg(x) \in \big(\frac{\pi}{4}, \frac{3\pi}{4}\big];\\
		x-(0,1) & \text{ if }   \arg(x) \in \big(\frac{3\pi}{4}, \pi \big] \ \text{ or } \ \arg(x) \in \big( -\pi,  -\frac{3\pi}{4}\big];\\
		x+(1,0) & \text{ if }   \arg(x) \in \big(-\frac{3\pi}{4}, -\frac{\pi}{4}\big].
		\end{cases}  \]
	\item For $q\in (0,\frac{1}{2}]$, the following rotor configuration is recurrent:
	\[ \rho_{\text{Line}}(x)  \quad := \quad 
		\begin{cases}
		x+(1,0) & \text{ if }   \arg(x) \in \big(-\frac{\pi}{4}, \frac{\pi}{4}\big];\\
		x+(0,1) & \text{ if }   \arg(x) \in \big(\frac{\pi}{4}, \frac{3\pi}{4}\big];\\
		x-(1,0) & \text{ if }   \arg(x) \in \big(\frac{3\pi}{4}, \pi \big] \ \text{ or } \ \arg(x) \in \big( -\pi,  -\frac{3\pi}{4}\big];\\
		x-(0,1) & \text{ if }   \arg(x) \in \big(-\frac{3\pi}{4}, -\frac{\pi}{4}\big].
		\end{cases}  \]
\end{itemize}
See Figure~\ref{Figrotor configurations} for an illustratrion of \. $\rho_{\text{Box}}$,  \.  $\rho_{\text{Line}}$, and \. $\rho_{\text{Alternating}}$\..
The rotor configuration  $\rho_{\text{Box}}$ is notably also 
a recurrent configuration~\cite{AH2} and a configuration with minimal range~\cite{FLP}  for the clockwise rotor walk.

\begin{figure}[hbt]
	\centering
	\begin{tabular}{c@{\hskip 0.6in}c @{\hskip 0.6in} c @{\hskip 0.6in} c}
		\includegraphics[width=0.2\linewidth]{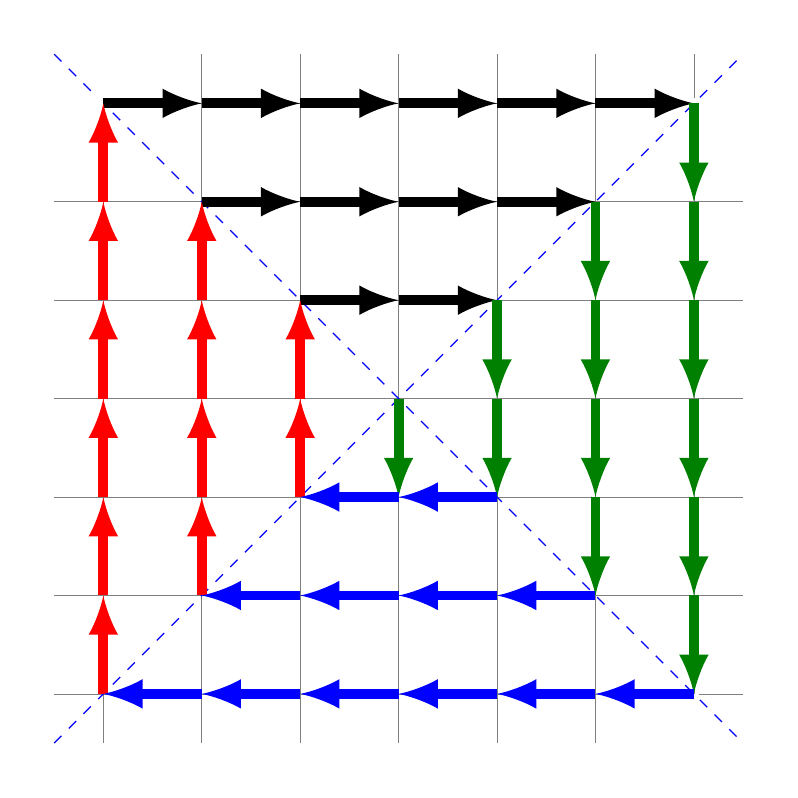}  & 
		 \includegraphics[width=0.2\linewidth]{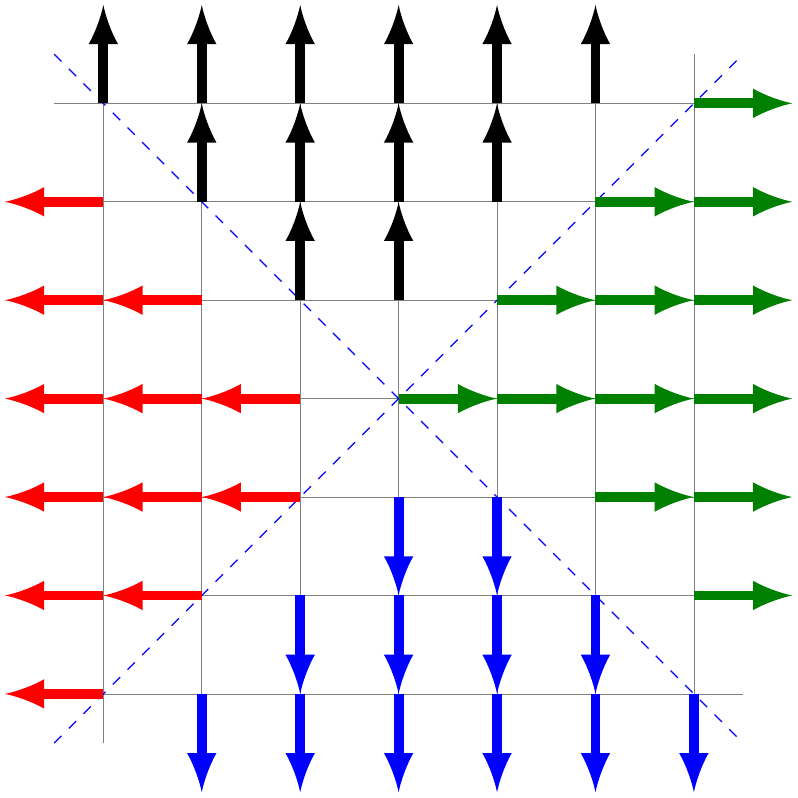}   &
		 \includegraphics[width=0.2\linewidth]{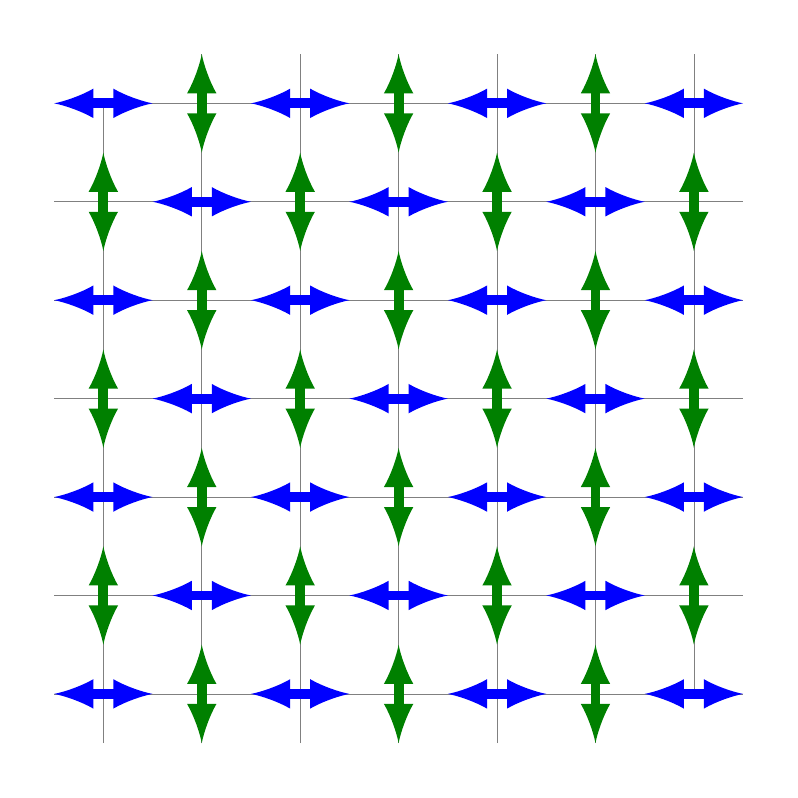} &
		\includegraphics[width=0.2\linewidth]{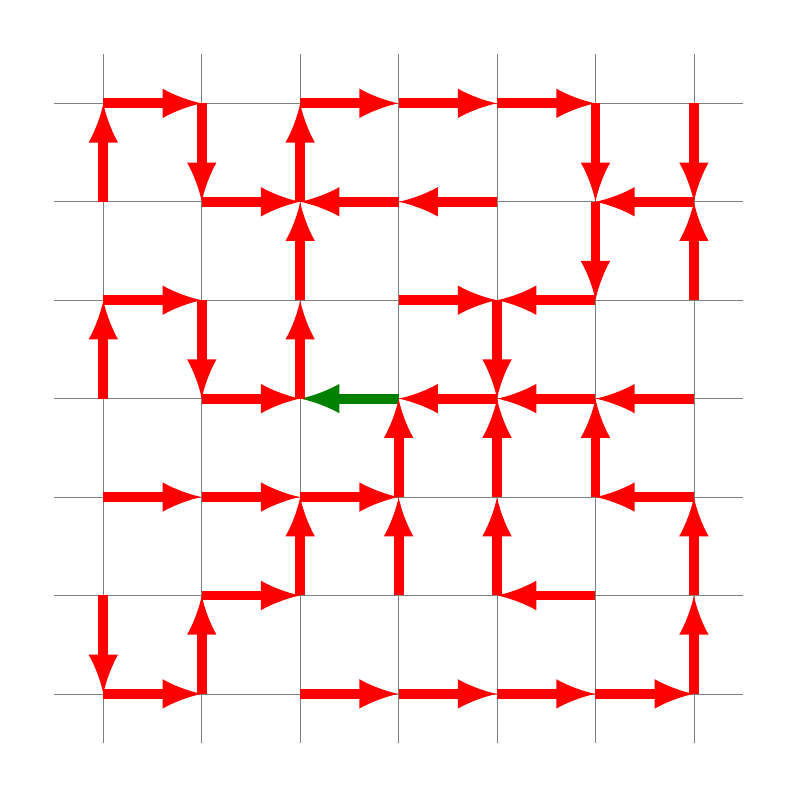}  \\
		(a) & (b) & (c) & (d)
	\end{tabular}
	\caption{(a) The  configuration $\rho_{\text{Box}}$.
			 (b) The  configuration $\rho_{\text{Line}}$.
			 (c) The configuration $\rho_{\text{Alternating}}$.
		 	(d) The  configuration $\rho_{\text{USTP}}$.
	 		Note that, for $\Hor$--$\Ver$ walks,
	 		the up-arrow ($\uparrow$) and the down-arrow ($\downarrow$) is equivalent to the label $\Ver$~($\updownarrow$), while  the left-arrow ($\leftarrow$) and the right-arrow ($\rightarrow$) is equivalent to the label $\Hor$ ($\leftrightarrow$).}
	\label{Figrotor configurations}
\end{figure}

%

On the other hand, we believe that our results in Theorem~\ref{thmrecurrence many walker} and Theorem~\ref{thmrecurrence IID}  are likely not tight.
Indeed, the techniques used in the proof of Theorem~\ref{thmrecurrence IID} could be used  to derive the recurrence for $\IID$ uniform rotor configuration on the four directions for a  slightly larger regime
$q \in \big(\frac{1}{3}-\varepsilon, 1\big]$, with \. $\varepsilon \. \approx \. 1.5 \times 10^{-19}$\..
In fact, we believe that the following stronger claim is true.

\smallskip

\begin{conjecture}\label{conjhv recurrence}
		Let $q \in (0,1]$.
	Then, for every initial rotor configuration,
	the corresponding $\Hor$--$\Ver$ walk with a single walker is recurrent a.s..
\end{conjecture}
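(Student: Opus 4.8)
The natural strategy is to push the martingale method of Sections~\ref{secmartingale}--\ref{secproof of recurrence IID critical} all the way to every deterministic configuration. By Proposition~\ref{proposition: zero-one law recurrence} it suffices to prove $p_{\rec}(\rho)>0$, and by (the proof of) Lemma~\ref{lemreturn probability limit} this is equivalent to a lower bound for $p_k(\rho):=\lim_{r\to\infty}p_{k,r}(\rho)=\Pb[R_\infty\ge k]$ that is uniform in $k$. Fix $\rho$ and let $K_x$ be the number of visits to $x$ before $\tau_{k,r}(\rho)$. A one-line computation from the flip rule gives, for every $x\notin\{(0,0)\}\cup\partial B_r$,
\[
\Eb\bigl[\wt\bigl(x,\tau_{k,r}(\rho)\bigr)\mid K_x\bigr]\ =\ \wt(x,0)\,(1-2q)^{K_x},
\]
so $\wt(x,0)-\Eb[\wt(x,\tau_{k,r}(\rho))]=\wt(x,0)\bigl(1-\Eb[(1-2q)^{K_x}]\bigr)$, and Lemma~\ref{lemreturn probability lower bound} becomes a statement about the \emph{weighted flipping defect} $\sum_{x\in B_{r+1}}|\ft(x)|\bigl(1-\Eb[(1-2q)^{K_x}]\bigr)$: one must show it is at most $\kappa_q\,\tfrac2\pi\ln r\,(1+o(1))$ with $\kappa_q<\tfrac{q}{|2q-1|}$ (a threshold equal to $1$ at $q=\tfrac13$ and $q=1$ and tending to $0$ as $q\to 0$).

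The first improvement over the current bounds is to use, instead of the crude $|\wt(x,0)-\Eb[\wt(x,\tau)]|\le 2|\ft(x)|$ of Lemma~\ref{lemrecurrence many walker lower bound}, the sharp pointwise bound $1-\Eb[(1-2q)^{K_x}]\in[0,\max(1,2q)]$ (since $\Eb[(1-2q)^{K_x}]$ lies between $\inf_{j\ge0}(1-2q)^j$ and $1$); this alone pushes the deterministic single-walker range up to $q\in(\tfrac13,\tfrac34)$, slightly beyond Corollary~\ref{correcurrence single walker}. Next one should exploit the self-referential structure of Lemma~\ref{lemreturn probability lower bound}: split the defect according to whether the frozen walk reaches $\partial B_r$ before its $k$-th return to the origin. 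On the complementary event the walk has returned $k$ times and, for fixed $k$, explores a region of $|\ft|$-weight $o(\ln r)$ --- because $\max_{t\le\tau_k}|Y_t|<\infty$ a.s.\ on $\{R_\infty\ge k\}$, and $\sum_{|x|\in[d,d+1)}|\ft(x)|\sim\frac2{\pi d}$ makes $\sum_x|\ft(x)|\,\Pb[|Y|\text{ reaches }|x|\mid R_\infty\ge k]=o(\ln r)$ by Cesàro summation. On the ``escape'' event, whose probability tends to $1-p_k(\rho)$, feeding the per-vertex bound into Lemma~\ref{lemreturn probability lower bound} and letting $r\to\infty$ yields $p_k(\rho)(1-\alpha_q)\ge 1-\alpha_q$ with $\alpha_q=\tfrac{|2q-1|}{q}\,\kappa_q^{\mathrm{esc}}$, where $\kappa_q^{\mathrm{esc}}$ is the $|\ft|$-weighted average flipping defect \emph{accumulated while escaping}; hence $p_k(\rho)=1$ (so $p_{\rec}(\rho)=1$) as soon as $\kappa_q^{\mathrm{esc}}<\tfrac{q}{|2q-1|}$. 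The self-referential split is thus the vehicle that turns \emph{any} improvement of $\kappa_q$ on the escape event into a full recurrence statement.

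The main obstacle --- and the reason this remains open --- is to prove $\kappa_q^{\mathrm{esc}}<\tfrac{q}{|2q-1|}$ uniformly over adversarial initial configurations: for $q\ge\tfrac12$ the enemy is the $\rho_{\text{Line}}$-type data with $\wt(x,0)=+|\ft(x)|$ everywhere, for which one must show that a single-walker $\Hor$--$\Ver$ walk escaping to distance $r$ cannot leave a positive $|\ft|$-fraction of the annulus $B_{r+1}\setminus B_3$ visited $\asymp 1/q$ (for $q=1$: an odd number of) times; and as $q\to0$ the threshold collapses, so one needs the escaping walk to visit all but a vanishing $|\ft|$-fraction of $B_{r+1}$ fewer than $\asymp 1/q$ times. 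The per-vertex anti-concentration Lemma~\ref{lemRadon Nikodym 2} is too weak for this: even perfect parity balance $p_{even}(\rho,x)=p_{odd}(\rho,x)$ only gives $\kappa_1^{\mathrm{esc}}=1$, exactly the borderline. I expect the resolution must be \emph{global} rather than vertex-by-vertex: for instance a second corrector term added to $M_t$ that is harmonic for the annealed position-and-label dynamics, engineered so that $\Eb[M_{\tau}]=0$ itself forces $\sum_x|\ft(x)|\bigl(1-\Eb[(1-2q)^{K_x}]\bigr)=o(\ln r)$ on the escape event; or an entropy/large-deviation estimate showing that a full-$|\ft|$-weight set of vertices being visited an odd (resp.\ $\gtrsim 1/q$) number of times costs more steps than an escaping trajectory can afford, combined with a bound on those steps extracted from the martingale. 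Producing such an estimate in the non-elliptic case $q=1$, where random-walk-in-random-environment techniques do not apply, is where the genuine difficulty lies.
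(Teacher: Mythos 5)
The statement you are addressing is Conjecture~\ref{conjhv recurrence}, which the paper explicitly leaves open: there is no proof of it in the paper to compare against, and your text is not a proof either. You say so yourself --- the entire argument funnels into the estimate $\kappa_q^{\mathrm{esc}}<\frac{q}{|2q-1|}$ uniformly over adversarial initial configurations, which you label ``the main obstacle --- and the reason this remains open.'' A reduction of an open conjecture to an unproved estimate of comparable strength is a research program, not a proof, so the conjecture remains unproved under your proposal. Your diagnosis of where the difficulty sits is, for what it is worth, consistent with the paper's own discussion following the conjecture (one needs control of $\big|\Pb[\rho_t(x)=\Hor]-\Pb[\rho_t(x)=\Ver]\big|$, i.e.\ of the law of the evolving environment, which neither the paper's Lemma~\ref{lemRadon Nikodym 2} nor any vertex-by-vertex bound currently supplies beyond the fixed constant $1-\frac{1}{58\cdot 2^{58}}$).

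Beyond the admitted gap, your one concrete quantitative step is already wrong for $q\in(0,1)$: the claimed ``one-line'' identity $\Eb\big[\wt(x,\tau_{k,r}(\rho))\mid K_x\big]=\wt(x,0)(1-2q)^{K_x}$ does not hold, because the flip outcomes at $x$ are not independent of $K_x$ or of the stopping rule. Each flip decides the exit direction from $x$, the exit direction influences whether the walker returns to $x$ and whether it reaches the origin or $\partial B_r$, and in fact $\wt(x,\tau_{k,r}(\rho))$ is a deterministic function of the trajectory (it is $\ft_{\Hor}(x)$ or $\ft_{\Ver}(x)$ according to the \emph{last exit direction} from $x$); conditioning on $K_x$ therefore biases the flip sequence away from i.i.d.\ Bernoulli$(q)$. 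What is true is an unconditional martingale-type identity such as $\Eb\big[\wt(x,\tau)(1-2q)^{-K_x}\big]=\wt(x,0)$ (for $q\neq\frac12$), which is not the statement you use. The clean cases are exactly $q=1$ (deterministic flips, so parity of $K_x$ determines the label --- but then the distribution of that parity under an adversarial $\rho$ is precisely what the paper can only bound via the path-surgery argument of Lemma~\ref{lemword 2}--Lemma~\ref{lemphi preimage}) and $q=\frac12$ (where the post-visit label is uniform and independent of everything). Consequently the intermediate claims built on that identity --- the ``sharp pointwise bound'' and the assertion that it extends the deterministic single-walker range to $q\in(\frac13,\frac34)$ --- are unsupported, and the self-referential inequality $p_k(\rho)(1-\alpha_q)\ge 1-\alpha_q$ rests on the same unproved escape-defect estimate. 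In short: the approach correctly locates the open difficulty but neither circumvents it nor gets its preparatory steps right.
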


\smallskip

Note that the recurrence regime $q \in (0,1]$ in Conjecture~\ref{conjhv recurrence} is the best one could hope for,
as  there exist  transient rotor configurations when  $q=0$ (see  Section~\ref{subsecq0} below).



The real obstacle in proving Conjecture~\ref{conjhv recurrence} is the lack of understanding of the law of  the rotor configuration $\rho_t$ at the $t$-th step of the walk (we speculate on the law of $\rho_t$  in Problem~\ref{queststationarity} below).
Indeed, with the exception of the proof of Proposition~\ref{proprecurrence IID critical},
we always use the following rudimentary upper bound: for all $t \geq 0$ and $x \in \Zb^2$, 
\[ \big | \. \Pb [\rho_t(x) \ = \ \Hor ]  \ - \  \Pb [\rho_t(x) \ = \ \Ver ] \. \big |  \quad \leq \quad 1\.. \]
Thus developing a better upper bound for the inequality above would (e.g., an $o(1)$ upper bound)  constitute a natural first step  in solving  Conjecture~\ref{conjhv recurrence}. 

\smallskip

\subsection{The case $q=0$}\label{subsecq0}
None of our main results apply to $\Hor$--$\Ver$ walks with $q=0$, as the martingale in Definition~\ref{defmartingale} is not well defined.
In fact, there are  rotor configurations for this walk that are transient, regardless of the number of walkers~(c.f., Theorem~\ref{thmrecurrence many walker}).
Indeed,
it is straightforward to check that, for $\rho_{\text{Box}}$, each walker visits every vertex only finitely many times.
On the other hand, it is straightforward to show that the following rotor configuration is recurrent for this walk:
\[\rho_{\text{Alternating}}(a,b) \ := \ 
\begin{cases}
	\Hor & \text{ if } b-a \ \equiv \ 0 \text{ mod } 2;\\
	\Ver & \text{ if } b-a \ \equiv \ 1 \text{ mod } 2,
\end{cases}  
\]
%
%
as  the even steps of this walk $(X_{2t})_{t \geq 0}$ is a simple random walk on $\Zb^2$ with  each step being sampled uniformly from $\{(\pm 1, \pm 1)\}$\..  
(See Figure~\ref{Figrotor configurations} for an illustration of $\rho_{\text{Alternating}}$.)


Suppose now that    $\rho$ is sampled from the $\IID$ uniform measure on $\{\Hor,\Ver\}$.
Then the $\Hor$--$\Ver$ walk is not recurrent, as there are infinitely many vertices of $\Zb^2$ that are visited only finitely many times a.s..
Indeed, this is because $x\in \Zb^2$ will never be visited if the following property is satisfied:
\[ \rho(x+(1,0)) \ = \  \rho(x-(1,0))  \ = \  \Ver; \qquad \rho(x+(0,1)) \ = \  \rho(x-(0,1)) \ = \ \Hor\.,\]
and, by the Borel-Cantelli lemma, there are infinitely many vertices in $\Zb^2$ with this property.
On the other hand, 
there always exist some $x \in \Zb^2$ for which $x$ is visited infinitely many times by this walk; see the proof in the discussion after Lemma~2.5 in \cite{HS14}.
Note that
 the dichotomy in Lemma~\ref{lemma: dichotomy recurrence transience} does not apply here as the corresponding stack is not regular.

%
%

%

\medskip

\subsection{Stationary distribution and scaling limit}
 Consider the rotor configuration \. $\rho_{\text{USTP}}$, where  
the rotors at \. $\Zb^2 \setminus \{(0,0)\}$ \. form a (random) uniform spanning tree directed toward the origin~(see \cite{Pem,BLPS}),  and the rotor at the origin is sampled uniformly from the 
neighbors of the origin, independently from the uniform spanning tree (See Figure~\ref{Figrotor configurations} for an illustration of $\rho_{\text{USTP}}$).
It was shown in \cite[Theorem~1.1]{CGLL} that $\rho_{\text{USTP}}$  is  \emph{stationary} with respect to the   \emph{scenery process} of the single-walker $\Hor$--$\Ver$ walk.
That is, if the initial rotor configuration $\rho_0$ is equal to $\rho_{\text{USTP}}$,
then the rotor configuration \. $\rho_t(\cdot -X_t)$ \. at the $t$-th step of walk, observed from the viewpoint of the walker, is equal in distribution to  
$\rho_{\text{USTP}}$\..
It remains to be seen if the following stronger claim of stationarity is true.

\smallskip

\begin{problem}\label{queststationarity}
Let $q>0$. and let \. $(X_t,\rho_t)_{t \geq 0}$ \. be an $\Hor$--$\Ver$ walk with a single walker with an arbitrary initial rotor configuration.
Show that that  $\rho_t(\cdot - X_t)$ converges weakly to $\rho_{\text{USTP}}(\cdot)$ as $t \to \infty$.
That is, for every $x_1,\ldots, x_m \in V$ and every \. $d_1,\ldots, d_m \in \{\Hor,\Ver\}$, show that 
\[ \Pb\big[\. \rho_t(x_1-X_t) = d_1 \. , \. \ldots \. , \. \rho_t(x_m-X_t) = d_m \. \big] \quad \overset{t \to \infty}{\longrightarrow} \quad \Pb\big[\. \rho_{\text{USTP}}(x_1) = d_1 \. , \. \ldots \. , \. \rho_{\text{USTP}}(x_m) = d_m \. \big]\..    \]
\end{problem}

\smallskip


The fact that $\rho_{\text{USTP}}$ is stationary was used in \cite[Theorem~1.2]{CGLL} to show that the quenched scaling limit of the $\Hor$--$\Ver$ walk with the initial rotor configuration $\rho_{\text{USTP}}$ is the standard Brownian motion in $\Rb^2$.
Simulations suggest that we will obtain the same scaling limit when the  initial rotor configuration is sampled from the $\IID$ uniform measure  on $\{\Hor,\Ver\}$ (see Figure~\ref{figure: simple random walk vs rotor walk}).

\smallskip

\begin{conjecture}\label{conjquenched}
	Let $q>0$, and let \. $(X_t,\rho_t)_{t \geq 0}$ \. be an $\Hor$--$\Ver$ walk with a single walker with the  initial rotor configuration $\rho$ sampled from the $\IID$ uniform measure  on $\{\Hor,\Ver\}$.
	Then the quenched scaling limit for this walk is the standard Brownian motion $(B_t)_{t \geq 0}$ in $\Rb^2$\..
	That is to say, for almost every $\rho$ sampled from the $\IID$ uniform measure  on $\{\Hor,\Ver\}$,
	\[ \frac{1}{\sqrt{n}} (X_{\lfloor nt \rfloor})_{t \geq 0}    \quad \overset{n \to \infty}{\Longrightarrow}  \quad  (B_t)_{t \geq 0},\]
	with the convergence being the  weak convergence in the
	Skorohod space $D_{\Rb^2}[0,\infty)$.
\end{conjecture}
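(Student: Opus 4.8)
The plan is to exploit that the position process $(X_t)_{t\ge 0}$ of a single-walker $\Hor$--$\Ver$ walk is \emph{already a martingale}: every step has mean zero, so $\Eb[\,X_{t+1}-X_t\mid\Fc_t\,]=0$, and the increments have modulus $1$. A short computation from the transition rule shows that the conditional covariance of a step is dictated by the label currently carried by the walker's site: with $A_{\Hor}:=\operatorname{diag}(1-q,q)$ and $A_{\Ver}:=\operatorname{diag}(q,1-q)$ one has $\Eb\big[(X_{t+1}-X_t)(X_{t+1}-X_t)^{\top}\mid\Fc_t\big]=A_{\rho_t(X_t)}$, so the predictable quadratic variation is $\langle X\rangle_t=\sum_{s=0}^{t-1}A_{\rho_s(X_s)}$. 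Since $A_{\Hor}+A_{\Ver}=I$ (the $2\times2$ identity) and the limit $\tfrac t2 I$ is deterministic and continuous, the functional central limit theorem for martingales — whose conditional Lindeberg condition is automatic here, the increments being bounded — reduces the conjecture to the assertion that, for almost every $\rho$ drawn from the $\IID$ uniform measure on $\{\Hor,\Ver\}$, the occupation fraction
\[
\Theta_n\ :=\ \tfrac1n\,\big|\{\,0\le s<n:\ \rho_s(X_s)=\Hor\,\}\big|
\]
satisfies $\Theta_{\lfloor nt\rfloor}\to t/2$ in probability under the walk, for each $t$. Indeed this yields $\tfrac1n\langle X\rangle_{\lfloor nt\rfloor}\to\tfrac t2\,I$, hence $\tfrac1{\sqrt n}X_{\lfloor n\cdot\rfloor}\Rightarrow(B_\cdot)$ in $D_{\Rb^2}[0,\infty)$ for a Brownian motion with isotropic covariance $\tfrac12 I$, the limit identified in \cite[Theorem~1.2]{CGLL}; the anisotropy of a single step washes out precisely because the $\Hor$- and $\Ver$-diffusivities are interchanged by the lattice symmetry, so \emph{any} start with $\Theta_n\to 1/2$ gives the same limit.

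Tightness of $\big(\tfrac1{\sqrt n}X_{\lfloor n\cdot\rfloor}\big)_{n\ge1}$ in the Skorohod space is routine and unconditional: Doob's $L^2$ inequality together with $\Eb[\langle X\rangle_t]\le t$ controls $\Eb[\sup_{s\le nt}|X_s|^2]$, and the analogous estimate over subintervals combined with Aldous's criterion gives tightness. Moreover, since $\langle X\rangle_t$ is a sum of positive semidefinite matrices, pointwise-in-$t$ convergence of $\tfrac1n\langle X\rangle_{\lfloor nt\rfloor}$ to the continuous limit $\tfrac t2 I$ upgrades automatically to locally uniform convergence. Thus the entire content of the conjecture is the law of large numbers for the occupation fraction $\Theta_n$.

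For the stationary initial configuration $\rho_{\text{USTP}}$ this law of large numbers is exactly the ingredient behind \cite[Theorem~1.2]{CGLL}: the environment seen from the walker is then stationary, and the $90^{\circ}$-rotation symmetry of the uniform spanning tree measure, which interchanges $\Hor$ and $\Ver$, forces $\Pb[\rho_{\text{USTP}}(0)=\Hor]=\tfrac12$, so a Cesàro ergodic argument delivers $\Theta_n\to\tfrac12$. To reach the conjecture one would transfer this to the $\IID$ start in two layers: first resolve Problem~\ref{queststationarity}, i.e.\ show $\rho_t(\cdot-X_t)$ converges weakly to $\rho_{\text{USTP}}$; then promote that one-time-marginal statement to a quenched Cesàro law of large numbers for the bounded observable $\mathbbm{1}\{\rho_t(X_t)=\Hor\}$, for instance by coupling the environment-seen-from-the-walker process started from a fixed $\rho$ with its stationary counterpart, or by isolating a regeneration structure for the martingale $X_t$ that is robust to the choice of the $\IID$ seed.

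The bottleneck is precisely the one flagged in Section~\ref{subsec existence of recurrent configurations}: we have essentially no control over the law of $\rho_t$ when the walk starts from a generic configuration, and away from $\rho_{\text{USTP}}$ the crude bound $\big|\Pb[\rho_t(x)=\Hor]-\Pb[\rho_t(x)=\Ver]\big|\le 1$ is all that is currently available. Problem~\ref{queststationarity} is itself open, and even granting it, turning weak convergence of marginals into an in-probability (let alone almost sure) ergodic theorem for the time-averages — and doing so \emph{quenched}, for a.e.\ fixed $\rho$, rather than merely annealed — is delicate, because under the $\IID$ start the environment process is neither Markov on a tractable state space nor stationary. A sensible intermediate target would be the annealed version of the conjecture (averaging over $\rho\sim\IID$), where the coupling to $\rho_{\text{USTP}}$ may be run more directly; the quenched statement, however, appears to require genuinely new information about $\rho_t$ of the kind discussed after Conjecture~\ref{conjhv recurrence}.
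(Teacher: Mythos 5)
The statement you are addressing is Conjecture~\ref{conjquenched}, which the paper leaves open: there is no proof in the paper to compare against, and your text, by its own admission, is not a proof either. Your reduction step is sound and worth having on record: $(X_t)$ is indeed a martingale with bounded increments, the conditional step covariance is $A_{\rho_t(X_t)}$ with $A_{\Hor}=\operatorname{diag}(1-q,q)$, $A_{\Ver}=\operatorname{diag}(q,1-q)$, and $A_{\Hor}+A_{\Ver}=I$, so the martingale functional CLT (tightness and the Lindeberg condition being automatic) reduces the conjecture to the quenched law of large numbers $\Theta_{\lfloor nt\rfloor}\to t/2$ for the occupation fraction of $\Hor$-labels seen by the walker. (A minor normalization caveat: this yields a Brownian motion with covariance $\tfrac12 t\, I$, matching the simple random walk normalization, rather than covariance $t\,I$; one should either rescale or read ``standard'' accordingly.)

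The genuine gap is exactly the step you flag and then defer: establishing $\Theta_n\to\tfrac12$ for almost every $\IID$ seed $\rho$. This is not a technical afterthought but the entire content of the conjecture, and it is where the paper itself identifies the obstruction --- away from $\rho_{\text{USTP}}$ nothing better than the trivial bound $\big|\Pb[\rho_t(x)=\Hor]-\Pb[\rho_t(x)=\Ver]\big|\le 1$ is known, Problem~\ref{queststationarity} (weak convergence of the environment seen from the walker to $\rho_{\text{USTP}}$) is open, and even granting it one would still need to upgrade one-time-marginal convergence to a quenched Ces\`aro ergodic theorem for $\mathbbm{1}\{\rho_t(X_t)=\Hor\}$, for which no coupling or regeneration structure is currently available. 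So what you have written is a correct and useful reduction of the conjecture to an open ergodicity statement, together with an honest account of why that statement is hard; it does not constitute a proof of Conjecture~\ref{conjquenched}, and it should be presented as a reduction/roadmap rather than as a proof environment.
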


Note that the case $q=0$ in Conjecture~\ref{conjquenched}  is a special case of the result of  Berger and Deuschel~ \cite[Theorem~1.1]{BD}, who proved a quenched invariance principle for a large family of random walks in random environments (note that the $\Hor-\Ver$ walk is  not a random walk in random environment anymore when $q>0$).

\medskip

\subsection{$p$-rotor walk on $\Zb^2$}
Unfortunately, the techniques used in this paper are very sensitive to changes 
to the model.
Indeed, consider the \emph{$p$-rotor walk} on $\Zb^2$~(introduced in \cite{HLSH}),  
which is an RWLM where, for each step, a walker rotates the rotor of its current location 90-degrees counter-clockwise with probability $p$, and rotates the rotor 90-degrees clockwise with probability $1-p$.
Note that we recover the counterclockwise rotor walk when $p=0$, the clockwise rotor walk when $p=1$, and the $\Hor$--$\Ver$ walk with $q=1$ when $p=\frac{1}{2}$.

\smallskip

\begin{conjecture}\label{conjprotor walk recurrence}
	Let $p \in [0,1]$.
	Then, for almost every $\rho$ sampled from the $\IID$ uniform measure  on $\{\Hor,\Ver\}$,
	the corresponding $p$-rotor walk on $\Zb^2$ visits every vertex infinitely many times a.s..
\end{conjecture}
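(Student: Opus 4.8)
A natural line of attack is to transpose the three–part architecture of the present paper to the $p$-rotor setting: (i) construct a martingale adapted to the $p$-rotor dynamics that simultaneously tracks the number of departures from the origin and the rotor configuration; (ii) run the optional stopping argument of Lemma~\ref{lemreturn probability lower bound} on the frozen walk to produce a lower bound for the return probability $p_{k,r}(\rho)$ that is uniform in $k$; and (iii) conclude via the zero–one laws of Section~\ref{subseczero-one laws}, which were proved for arbitrary RWLMs and hence apply verbatim to $p$-rotor walks on $\Zb^2$.

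For step (i), observe that a $p$-rotor step at a vertex $x$ whose rotor points in direction $d$ moves the walker to one of the two $90^\circ$-rotations of $d$, so four consecutive visits to $x$ cycle its rotor through all four directions in cyclic order. Computing $\Eb[a(Y_{t+1}^{(i)})-a(Y_t^{(i)})\mid\Fc_t]$ as in Section~\ref{secmartingale}, one obtains a symmetric contribution of the form $2p\,\ft_{\Hor}(Y_t^{(i)})+2(1-p)\,\ft_{\Ver}(Y_t^{(i)})$ (or the same with $\Hor$ and $\Ver$ interchanged, according to whether $d$ is horizontal or vertical), \emph{plus} an antisymmetric drift term equal to $(2p-1)$ times a discrete gradient of $a$ at $x$. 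One then defines a weight $\wt_d(x)$ for each of the four directions --- the analogue of \eqref{eqweight} --- by solving the resulting $4\times 4$ linear system, so that
\begin{equation*}
M_t \ := \ \sum_{i=1}^{n} a(Y_t^{(i)}) \ - \ N_t \ + \ c\sum_{x\in\bigcup_{i=1}^{n}\{Y_0^{(i)},\ldots,Y_t^{(i)}\}}\bigl(\wt(x,t)-\wt(x,0)\bigr)
\end{equation*}
is a martingale for an appropriate constant $c=c(p)$; the system is non-degenerate because $1+(2p-1)^2>0$, and at $p=\tfrac{1}{2}$ it degenerates to Definition~\ref{defmartingale}. The crucial new feature is that, unlike $\ft_{\Hor},\ft_{\Ver}=O(|x|^{-2})$, these directional weights contain a piece of order $|x|^{-1}$ (a gradient of $\tfrac{2}{\pi}\ln|x|$); however the sum of the four weights at a fixed $x$ carries \emph{no} $|x|^{-1}$ part, so, up to a gauge constant, $\sum_{x\in B_{r+1}}\bigl|\tfrac{1}{4}\sum_d\wt_d(x)\bigr|=O(\ln r)$, exactly as in Lemma~\ref{lemweight function sum}.

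Step (ii) then proceeds as in Lemma~\ref{lemreturn probability lower bound} and the proof of Proposition~\ref{proprecurrence IID subcritical}. For the $\IID$-uniform initial configuration, averaging $\wt(x,0)$ over the four equiprobable directions kills the $|x|^{-1}$ drift part and leaves only the $O(\ln r)$ symmetric part, the analogue of Lemma~\ref{lemweight initial configuration}. For the \emph{stopped} configuration one exploits the key cancellation: over any four consecutive visits to $x$ the drift weights sum to zero, so the accumulated drift at $x$ telescopes down to a partial-block remainder determined only by the number of visits to $x$ modulo $4$; and, since a uniform rotor stays uniform under rotation, this remainder averages to $0$ over $\rho(x)$ provided the number of visits to $x$ modulo $4$ is (approximately) independent of the initial rotor $\rho(x)$. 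Establishing such approximate independence would be carried out by an anti-concentration / admissible-path-surgery argument in the spirit of Lemma~\ref{lemRadon Nikodym 2}, the surgery near $x$ being insensitive to the chirality of the rotor. Assembling the pieces would give $\Eb_{\rho\sim\IUD}[p_{\rec}(\rho)]\geq\varepsilon(p)>0$ for $p$ in an open sub-interval of $(0,1)$, whence a.s.\ recurrence by Proposition~\ref{proposition: zero-one law IUD}.

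The main obstacle --- and the reason the statement is only a conjecture --- is quantitative: the telescoped drift remainder at $x$ is merely $O(|x|^{-1})$, so its sum over $B_{r+1}$ is of order $r$, which swamps the $\tfrac{2}{\pi}\ln r$ normalization in Lemma~\ref{lemreturn probability lower bound}. To win one would need the dependence of the residue (number of visits to $x$) $\bmod 4$ on $\rho(x)$ to decay like $|x|^{-1}$ in total variation, so that the remainder is $O(|x|^{-2})$ in expectation and hence summable against the weights; this is essentially a mixing statement for the scenery process $\rho_t(\cdot - X_t)$ of the kind speculated in Problem~\ref{queststationarity}, and it is currently out of reach for general initial configurations. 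The endpoint cases $p\in\{0,1\}$ are, of course, the long-standing open recurrence problem for the $\IID$-uniform rotor walk on $\Zb^2$, and lie well beyond the present techniques.
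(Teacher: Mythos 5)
Your submission is not a proof: the statement you were asked to establish is precisely Conjecture~\ref{conjprotor walk recurrence}, which the paper itself leaves open, proving only the case $p=\tfrac12$ (where the $p$-rotor walk coincides with the $\Hor$--$\Ver$ walk with $q=1$, i.e.\ Theorem~\ref{thmrecurrence IID}). You have written a programme, and to your credit you diagnose its failure point accurately and in agreement with the paper's own concluding remarks: for $p\neq\tfrac12$ the natural directional weights acquire an antisymmetric gradient piece of order $|x|^{-1}$, so the analogue of Lemma~\ref{lemweight function sum} produces a sum over $B_{r+1}$ of order $r$ rather than $O(\ln r)$, which the $\tfrac{\pi}{2\ln r}$ normalization in Lemma~\ref{lemreturn probability lower bound} cannot absorb. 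This is exactly the obstruction the paper records (the bound on $\Eb|\wt(x,t)|$ decays linearly rather than quadratically), so your step (ii) simply does not close.

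The genuine gap, which you name but do not fill, is the mixing/anti-concentration input: you need the law of the stopped rotor at $x$ (equivalently, the number of visits to $x$ modulo $4$) to be nearly independent of $\rho(x)$, with a total-variation error decaying like $|x|^{-1}$, uniformly over the trajectory up to $\tau_{k,r}(\rho)$. The surgery of Lemmas~\ref{lemword 1}--\ref{lemphi preimage} only yields a fixed constant gain (the $1-\tfrac{1}{58\,2^{58}}$ factor of Lemma~\ref{lemRadon Nikodym}), with no decay in $|x|$; upgrading it to a quantitative, distance-dependent estimate is essentially the stationarity/mixing statement of Problem~\ref{queststationarity}, and nothing in your outline supplies it. Secondary points are also unverified: the existence and non-degeneracy of your $4\times 4$ weight system and the single constant $c(p)$ are asserted, not checked, and at $p\in\{0,1\}$ the walk is deterministic, so the whole probabilistic framework (admissible-path weights $2^{-|w|}$, Lemma~\ref{lemRadon Nikodym 2}) has no counterpart. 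In short, your write-up is a reasonable research plan that correctly locates why the conjecture is hard, but it does not prove the statement, and no proof of it exists in the paper either.
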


\smallskip

This conjecture was answered positively by Theorem~\ref{thmrecurrence IID} for the case $p=\frac{1}{2}$.
However, the techniques of this paper  break down immediately when $p\neq \frac{1}{2}$,
as the best upper bound we have for  $\Eb |\wt(x,t)|$ (recall \eqref{eqweight}) would then have a linear decay  rather than a quadratic decay (see~\eqref{eqweight max asymptotic}), and we need at least a quadratic decay for the proof of Theorem~\ref{thmrecurrence IID} to work.
Thus developing a better upper bound for  $\Eb |\wt(x,t)|$   would constitute a natural first step in proving Conjecture~\ref{conjprotor walk recurrence}.


\bigskip

\section*{Acknowledgement}

The author would like to thank Lionel Levine and Yuval Peres for their advising
throughout the whole project,  Lila Greco  for performing the simulations for Figure~\ref{figure: simple random walk vs rotor walk}, Tal Orenshtein for pointing us to additional references,
and Peter Li for inspiring discussions. Part of this work was done when
the author was visiting the Theory Group at Microsoft Research, Redmond, and
when the author was a graduate student at Cornell University.
The author would also like to thank the anonymous referee and the editor for valuable comments and
references that greatly improves the  paper.
In particular, the proof of Lemma~\ref{lemreturn probability limit} is greatly simplified thanks to the referee's comment.

\vskip.9cm

\vskip 0.9 cm

\appendix
\section{Proof of Lemma~\ref{lemmonotonicity}}\label{secappendixA}


\begin{proof}[Proof of Lemma~\ref{lemmonotonicity}]
	For every $x,y \in V$, let $N_t(x,y)$ be the total number of departures from $x$ to $y$,
	by the stack walk with turn order $\Oc$,  up to time $t$,
    \[ N_t(x,y) \ := \  \big|\{ s \in \{1,\ldots,t\} \. \mid \.  X_{s-1}^{(i)}= x, \ X_s^{(i)}=y, \  \ \text{ for some } i \in \{1,\ldots,n\} \} \big|\., \]
    and we write \. $N_{\infty}(x,y) \. := \. \lim_{t \to \infty}N_{t}(x,y)$\..
	We denote by $N_t'(x,y)$ and $N_\infty'(x,y)$ the same numbers for the stack walk with turn order 
	$\Oc'$.
	Note that 
	\[ R_t(x) \ = \ \sum_{y \in \Ngh(x)} N_t(x,y); \qquad R_t'(x) \ = \ \sum_{y \in \Ngh(x)} N_t'(x,y).   \]
	Thus it  suffices to show that, for all $t\geq 0$, we have   \. $N_{t} (x,y) \. \leq \. N_{\infty}'(x,y)$ for  all $x,y \in V$.
	
	Suppose to the contrary that the claim is false.
	Let $t$ be the smallest integer such that the claim is false.
	Then there are two consequences.
	Firstly, 
	\begin{equation}\label{eqhotel 1}
	 R_{t-1}(z) \   \leq \ R'_{\infty}(z)  \quad \text{ for all } z \in V\..
	\end{equation}
	Secondly, there exists $x \in V$ such that,  
	\[ \sum_{y \in \Ngh(x)} N_t(x,y)  \quad = \quad   1 \ + \ \sum_{y \in \Ngh(x)} N_{\infty}'(x,y). \] 
	This implies two other consequences. Firstly, 
	\begin{equation}\label{eqhotel 2}
	N_{t-1}(x,y)  \ = \ N_{\infty}'(x,y) \qquad \text{ for all } y \in \Ngh(x)\..
	\end{equation}
	Secondly,  there is at least one walker present at $x$ at the end of the $t-1$-th step of the stack walk with turn order $\Oc$, i.e., 
	\begin{equation}\label{eqhotel 3}
	 |\{i \. \mid \. \Xb^{(i)}=x \} | \ + \  R_{t-1}(x)  \ - \  \sum_{y \in \Ngh(x)} N_{t-1}(x,y)  \quad \geq \quad 1,  
	\end{equation}
	where $\Xb^{(i)} \in V^n$ is the vector that records the initial locations of the walkers.
	
	Plugging \eqref{eqhotel 1} and \eqref{eqhotel 2} into \eqref{eqhotel 3},
	we get 
	\begin{equation*}
	|\{i \. \mid \. \Xb^{(i)}=x \} | \ + \ R_{\infty}(x)   \ - \  \sum_{y \in \Ngh(x)} N'_{\infty}(x,y)  \quad \geq \quad 1,  
	\end{equation*}
	This means that, excepting the first few finite steps,  there is always at least one walker present at $x$  throughout the entirety of the stack walk with turn order $\Oc'$.
	Since $\Oc'$ is regular, this implies that 
	$N_{\infty}'(x,y) = \infty$ for some $y \in \Ngh(x)$, which contradicts  \eqref{eqhotel 2}, as desired.
\end{proof}

\medskip

\section{Proof of Theorem~\ref{theorem: recurrence is invariant under stack relation}}\label{secappendixB}

We now build toward  the proof of Theorem~\ref{theorem: recurrence is invariant under stack relation},
which is adapted  from  \cite[Theorem~1]{AH2}.

Consider a variant of the (multi-walker) stack walk on a finite graph, 
where  the walkers are immediately frozen when they reach a specified set of \emph{sink vertices} $Z$ (which is nonempty).

\smallskip

\begin{lemma}[{Least action principle~\cite[Proposition~4.1]{DF}, see also \cite[Lemma~4.3]{BL}}]\label{lemleast action principle}
	Consider a stack walk on a finite graph with the initial location $\xb \in V^n$,
	the initial regular stack $\xi$, and the nonempty sink $Z$.
	 Then the stack walk terminates in a finite number of moves.
	 The final position, i.e., the number of frozen walkers on each vertex,
	 the total number of moves, and the total number of visits to each vertex, 
	 are independent of the chosen turn order.
\end{lemma}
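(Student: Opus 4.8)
The plan is to track the \emph{odometer} of a run of the process: for a finite, valid sequence of moves started from the initialization $(\xb,\xi)$, let $u(x)$ be the number of times the stack at $x$ has been popped, equivalently the number of departures of walkers from $x$. Two elementary facts drive everything. First, the odometer determines the bookkeeping of the run: for each $x$ the number of moves \emph{into} $x$ equals $\sum_{y\in\Ngh(x)}\#\{\,j<u(y):\xi(y,j)=x\,\}$; the number of walkers currently at $x$ equals (occurrences of $x$ in $\xb$) $+$ (moves into $x$) $-\,u(x)$ when $x\notin Z$, and (occurrences of $x$ in $\xb$) $+$ (moves into $x$) when $x\in Z$; and the total number of moves is $\sum_x u(x)$. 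Second, the number of moves into $x$ is monotone nondecreasing in each coordinate $u(y)$, $y\in\Ngh(x)$. Thus it suffices to prove that the final odometer does not depend on the turn order.

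First I would prove \emph{termination}: fix a regular turn order and suppose the run were infinite; since $V$ is finite, some vertex is popped infinitely often, so (as $\xi$ is regular) each of its neighbors is visited infinitely often, and (as $G$ is connected) every vertex --- in particular some sink $z\in Z$ --- is visited infinitely often. But each visit to $z$ permanently freezes one of the $n$ walkers, so $z$ is visited at most $n$ times, a contradiction. Hence the run is finite; and under a regular turn order it cannot stop while a walker sits at a non-sink vertex, since that walker would be selected again and could move. So every run under a regular turn order ends with all walkers frozen on $Z$; call such a run \emph{complete}, and call any finite sequence of moves in which the moving walker is each time at a non-sink vertex a \emph{legal} run (every prefix of a turn-order run is legal).

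The core is the following exchange lemma, proved by induction on the length of the legal run: \emph{if $u$ is the odometer of a legal run and $w$ is the odometer of a complete run, then $u\le w$ coordinatewise.} For the inductive step, write the legal run as a shorter legal run with odometer $u^-$ followed by one move of some walker from a non-sink vertex $x$, so $u$ agrees with $u^-$ except $u(x)=u^-(x)+1$; by induction $u^-\le w$, and we may assume $u^-(x)=w(x)$, for otherwise $u(x)\le w(x)$ already. When the odometer is $u^-$ there is a walker at $x$, so the balance identity gives (occurrences of $x$ in $\xb$) $+$ (moves into $x$ under $u^-$) $-\,u^-(x)\ge 1$. Since $u^-\le w$ on $\Ngh(x)$ the number of moves into $x$ under $w$ is at least that under $u^-$, and $u^-(x)=w(x)$, so the same balance identity for $w$ shows a walker sits at $x$ when the odometer is $w$; but $x\notin Z$ and $w$ is the odometer of a complete run, a contradiction. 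Applying the lemma to two complete runs $w,w'$ gives $w\le w'$ and $w'\le w$, hence $w=w'$: the final odometer is unique. By the first paragraph the final configuration, the total number of moves, and the number of visits to each vertex are all functions of this odometer, hence independent of the turn order.

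I expect the only delicate point to be the exchange lemma, and specifically the direction of monotonicity: the count of walkers \emph{currently at} $x$ is not monotone in the odometer, whereas the count of moves \emph{into} $x$ is, and the argument is arranged precisely so that the non-monotone term $u^-(x)=w(x)$ cancels. This is the same mechanism as in the proof of Lemma~\ref{lemmonotonicity} in Appendix~\ref{secappendixA}, specialized to the finite sink setting, and I would cross-reference it there; the remaining steps are routine bookkeeping.
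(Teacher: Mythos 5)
Your proof is correct, but there is no in-paper argument to compare it against: the paper takes this lemma as quoted from Diaconis--Fulton and Bond--Levine and never proves it. What you have written is essentially the standard least-action/odometer proof from those sources, and it is the same exchange mechanism that the paper \emph{does} spell out in Appendix~\ref{secappendixA} for Lemma~\ref{lemmonotonicity}: record only the number of pops at each vertex, use that the number of moves into $x$ is determined by the fixed stack and is monotone nondecreasing in the pop counts at the neighbors of $x$, and derive a contradiction from a walker being stranded at a non-sink vertex at the end of a complete run (there the role of the sink is played by regularity of the turn order forcing infinitely many further departures). So your proposal makes the appendix self-contained where the paper outsources the work; the underlying idea is identical.

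Two small points to tidy. First, with the paper's convention the pop happens \emph{before} the top card is read, so the $j$-th departure from $y$ goes to $\xi(y,j)$ rather than $\xi(y,j-1)$; your count of moves into $x$ should read $\sum_{y\in\Ngh(x)}\#\{1\le j\le u(y):\xi(y,j)=x\}$, which changes nothing in the argument since only determinism and monotonicity in $u(y)$ are used. Second, in the termination step the passage from ``some vertex is popped infinitely often'' to ``some sink is visited infinitely often'' uses an intermediate fact you leave implicit: a non-sink vertex that is entered infinitely often must itself be popped infinitely often, because at most $n$ walkers can occupy it at any time. Inserting that observation, the implication ``$v_i$ popped infinitely often $\Rightarrow$ $v_{i+1}$ entered infinitely often'' propagates along a path to the nearest sink and yields your contradiction with the at-most-$n$ visits to a sink.
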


%

%

\smallskip

We now apply the least action principle to prove increasingly stronger versions of Theorem~\ref{theorem: recurrence is invariant under stack relation}\..

\smallskip

\begin{lemma}\label{lemrecurrence invariant under changed position single walker}
	Let $x,x' \in V$ and let $\xi$ be a regular stack.
	Then the stack walk with a single walker with initialization 
	 $(x,\xi)$ is recurrent if and only if the stack walk with initialization $(x',\xi)$
	is recurrent. 
\end{lemma}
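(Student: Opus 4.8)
The plan is to reduce the assertion to a statement about finite truncations, and then to compare the two starting points via the abelian property. By the dichotomy of Lemma~\ref{lemma: dichotomy recurrence transience}, each of the two walks is recurrent or transient, and, since $\xi$ is regular and $G$ is connected, recurrence is equivalent to some (hence every) fixed vertex being visited infinitely often. Fix a reference vertex $o$ (for concreteness $o=x$). For $r$ large, let $\hat{G}_r$ denote the graph induced on $B_r\cup\partial B_r$ with $\partial B_r$ as a set of sinks, equipped with the (again regular) restriction of $\xi$; by the least action principle (Lemma~\ref{lemleast action principle}) the single-walker walk on $\hat{G}_r$ started at a vertex $v\in B_r$ terminates, and we may set $m_r(v)$ to be its total number of visits to $o$. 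The $\hat{G}_r$-walk from $v$ coincides with the $\hat{G}_{r+1}$-walk, and with the unfrozen walk on $G$, up to the first visit to $\partial B_r$; as in Lemma~\ref{lemmonotonicity} and in the proof of Lemma~\ref{lemreturn probability limit}, freezing can only delete visits, so $r\mapsto m_r(v)$ is nondecreasing with supremum equal to the total number of visits to $o$ by the walk from $(v,\xi)$ on $G$. Consequently the walk from $(v,\xi)$ is recurrent if and only if $m_r(v)\to\infty$, and the lemma is reduced to showing $m_r(x)\to\infty$ if and only if $m_r(x')\to\infty$; since this characterization of recurrence holds for every choice of $o$, by passing along a path in $G$ we may further assume $x\sim x'$.

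For the comparison I would use the abelian property contained in Lemma~\ref{lemleast action principle}. On $\hat{G}_r$, release one chip at $x$ and one at $x'$ simultaneously and run to absorption; the resulting odometer, and in particular the number of visits to $o$, is independent of whether one first drives the $x$-chip to $\partial B_r$ (which is the $\hat{G}_r$-walk from $x$) and then the $x'$-chip with the already-advanced stack, or vice versa. Matching the two orders gives, on the one hand, an identity of the form $m_r(x)-m_r(x') = (\text{visits to }o\text{ in the }x'\text{-second stage}) - (\text{visits to }o\text{ in the }x\text{-second stage})$, and on the other hand identifies the two-chip odometer on the interior as a solution of a discrete Poisson problem with source $\delta_x-\delta_{x'}$ and vanishing boundary data on $\partial B_r$, up to an error measuring the failure of $\xi$ to be balanced. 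Translating "the start vertex is visited infinitely often" into odometer language, the goal becomes to show $u^r_{x'}(x)\to\infty$ if and only if $u^r_x(x)\to\infty$, where $u^r_v$ is the odometer of the $\hat{G}_r$-walk from $v$.

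The main obstacle is precisely this last comparison, uniform in $r$. When $\xi$ is a balanced (rotor-type) stack, $u^r_v$ agrees with the killed Green function $g_r(v,\cdot)$ of simple random walk up to a bounded additive error, and $g_r(x,\cdot)-g_r(x',\cdot)$ is bounded uniformly in $r$ and in the evaluation point because it converges, by the potential kernel asymptotics of Theorem~\ref{thmpotential kernel}, to $a(x'-\cdot)-a(x-\cdot)=O(1)$; hence $|m_r(x)-m_r(x')|$ is bounded and the two sequences diverge together. For a general regular stack this estimate is unavailable, and the argument must instead be kept combinatorial: one reorganises the two-chip computation so that the chip started at $x'$ against the stack advanced by the first chip is compared directly to a lone chip started at $x'$, and bounds the discrepancy by the number of transitions the walk from $(x',\xi)$ makes in a fixed neighbourhood of $x$ --- a quantity which is finite precisely in the transient case, the only case that needs to be excluded. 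Carrying out this quantitative comparison so that it survives $r\to\infty$ is the technical heart of the proof; once it is in place, the dichotomy of Lemma~\ref{lemma: dichotomy recurrence transience} immediately promotes ``$m_r(x)\to\infty\iff m_r(x')\to\infty$'' to the desired equivalence of recurrence.
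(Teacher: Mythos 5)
There is a genuine gap: the step you yourself call the ``technical heart'' --- a comparison, uniform in $r$, between the number of visits to $o$ by the truncated walk started at $x$ and the one started at $x'$ --- is never carried out, and neither of the two routes you sketch closes it. The Green-function route (odometer $\approx$ killed Green function up to bounded error) is, as you admit, only available for balanced rotor-type stacks, whereas the lemma must hold for an arbitrary regular stack. The proposed combinatorial substitute (``reorganise the two-chip computation \dots bound the discrepancy by the number of transitions the walk from $(x',\xi)$ makes near $x$'') is not an argument: the least action principle of Lemma~\ref{lemleast action principle} only equates the two orders of releasing the $x$-chip and the $x'$-chip, which yields an identity in which each of $m_r(x)$, $m_r(x')$ is contaminated by a second-stage term (the other chip run against an already-advanced stack), and nothing in the proposal controls those terms for a general stack, let alone uniformly in $r$. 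Indeed, no bound of the form $|m_r(x)-m_r(x')|=O(1)$ should be expected for arbitrary regular stacks; the statement is qualitative, and trying to reach it through a quantitative difference estimate is the wrong lever.

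The paper's proof avoids any such comparison by exploiting regularity of the stack \emph{at $x$ itself}: since $x'$ occurs infinitely often in $\xi(x,\cdot)$, one may pick $m\ge k-1$ with $\xi(x,m)=x'$, let $S$ be the finite set of vertices visited by the recurrent walk from $x$ up to its $(m{+}1)$-st return to $x$, and build a finite sink network on $S$ in which cards leaving $S$ are redirected to a sink $z_1$ and cards pointing to $x$ are redirected to a sink $z_2$. Releasing $m+1$ walkers at $x$ and running them one at a time to absorption reproduces the recurrent walk from $x$, so all $m+1$ walkers end at $z_2$; by the least action principle the same终 configuration is reached for the turn order in which all walkers first pop off $x$ (placing one walker at each $\xi(x,i)$, in particular one at $x'$) and the walker at $x'$ moves first. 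In that order each absorption at $z_2$ corresponds exactly to a visit to $x$ by the walk on $G$ started at $(x',\xi)$, so that walk visits $x$ at least $m+1\ge k$ times; as $k$ is arbitrary, \eqref{eqrecurrence} gives recurrence. Your truncation-and-monotonicity framework (reduction to adjacent $x,x'$, monotone $m_r(v)$) is fine as far as it goes, but without an idea of this kind --- using that $x'$ appears arbitrarily deep in the stack at $x$, combined with a sink placed at $x$ rather than only on a distant boundary --- the proof is not complete.
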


\smallskip

\begin{proof}
	It suffices to prove that if the stack walk started at $x$ is recurrent, 
	then the stack walk  started at a neighbor  $x'$ of $x$ is also recurrent.
		By  \eqref{eqrecurrence} it then suffices to prove that,
	for every $k \geq 0$,
	the stack walk started at $x'$ visits $x$ at least $k$ times.
	

	Let $m:=m(k)$ be the smallest integer such that 
	\. $m\geq k-1$ \.  and $\xi(x,m)=x'$\.. 
	Note that $m$ exists because $\xi$ is regular.
	Let $S$ be the finite set of vertices visited by the stack walk started at $x$
	until it has made  $m+1$ returns to $x$.
	Let $F$ be the finite subgraph of $G$ induced by $S$,
	with  two  additional sink vertices $Z=\{z_1,z_2\}$.
	Let $\xi'$ be the stack of $S$ 
	where  every card in $\xi$ pointing outside of $S$ is  replaced with a card pointing to $z_1$,
	and every card in $\xi$ pointing to $x$ is replaced with a card pointing to $z_2$.
	
	We now start a stack walk on $S$ with $m+1$ walkers at $x$, with initial stack $\xi'$,
	and with sink vertices $Z=\{z_1,z_2\}$.
	We perform the stack walk with the following turn order:
	In the beginning a walker   leaves $x$ and performs stack walk until it reaches $Z$.
	Each time a walker reaches $Z$, we repeat the same process with another walker at $x$  until all walkers reach $Z$.
	Note that this multi-walker stack walk on $S$  follows exactly the trajectory of the original stack walk on $G$ started at $x'$, and  
	all the walkers  in fact reach $z_2$ (since the original walk is recurrent).
	By the least action principle~(Lemma~\ref{lemleast action principle}),
	we can perform this stack walk on $S$ using any turn order, and eventually all walkers will reach $z_2$.

	Now, we choose another turn order for this stack walk on $S$.
	First let all the $m+1$ walkers take one step,
	so there is now one walker at each vertex \. $\xi(x,i)$ for $i \in \{0,\ldots,m\}$ \. (including $x'$).
	Now let the walker at $x'$ perform stack walk until it gets absorbed at $z_2$.
	Whenever the $i$-th walker ($i \in \{1,\ldots,m+1\}$) is absorbed at $z_2$,
	we repeat the same process with the $i+1$-th walker being the walker at \. $\xi(x,i-1)$\.,
	until every walker reaches $z_2$.
	Note that this stack walk on $S$  follow exactly the trajectory of the original stack walk on $G$ started at $x'$, 
	and thus we conclude that the latter visits $x$ at least $m+1 \geq k$ times, as desired.
\end{proof}

\smallskip

\begin{lemma}\label{lemrecurrence invariant under changed position}
	Let $\xb,\xb' \in V^n$ and let $\xi$ be a regular stack.
	Then  
	$(\xb,\xi)$ is recurrent if and only if  $(\xb',\xi)$
	is recurrent. 
\end{lemma}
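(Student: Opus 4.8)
The plan is to reduce to the single‑walker result, Lemma~\ref{lemrecurrence invariant under changed position single walker}, by moving the walkers one at a time and adapting its truncation argument so that the remaining walkers are carried along as passengers. First I would reduce to the case where $\xb$ and $\xb'$ differ in exactly one coordinate: after relabeling the walkers, write $\xb=(x,x^{(2)},\dots,x^{(n)})$ and $\xb'=(x',x^{(2)},\dots,x^{(n)})$, and note that a general pair is joined by a chain of such single‑coordinate changes, so transitivity reduces us to this case. It also suffices to prove only the implication ``$(\xb,\xi)$ recurrent $\Rightarrow$ $(\xb',\xi)$ recurrent'': the reverse implication follows by interchanging the roles of $\xb$ and $\xb'$, and the equivalence for transience is then automatic from the dichotomy in Lemma~\ref{lemma: dichotomy recurrence transience}.

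So assume $(\xb,\xi)$ is recurrent. By \eqref{eqrecurrence} it is enough to show that, for every $k\ge 0$, the $n$‑walker walk from $\xb'$ visits $x$ at least $k$ times. Using regularity of $\xi$, pick an index $m\ge k-1$ with $\xi(x,m)=x'$. Run the recurrent $n$‑walker walk from $\xb$, let $T$ be the first time $x$ has been departed from $m+n$ times, and let $S$ be the finite set of vertices visited up to time $T$; then $x,x',x^{(2)},\dots,x^{(n)}\in S$. Form the finite graph $F$ on $S$ with two sinks $z_1,z_2$ and the stack $\xi'$ obtained from $\xi$ by redirecting every card pointing outside $S$ to $z_1$ and every card pointing to $x$ to $z_2$; one checks $\xi'$ is regular, so the least action principle (Lemma~\ref{lemleast action principle}) applies to any stack walk on $F$ with sink $\{z_1,z_2\}$, and in particular the total number of visits to $z_2$ is the same for every turn order.

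Now run, on $F$ with stack $\xi'$, the finite stack walk having $m+1$ walkers at $x$ together with one walker at each of $x^{(2)},\dots,x^{(n)}$, and compare two turn orders exactly as in the proof of Lemma~\ref{lemrecurrence invariant under changed position single walker}. With the first turn order, one walker at $x$ and the $n-1$ passengers replicate the $n$‑walker walk from $\xb$, and each time a walker is absorbed at $z_2$ (that is, the replicated walk transitions to $x$) a fresh spare walker still sitting at $x$ resumes the replication; by the choice of $T$ and $S$ this forces at least $m+1$ visits to $z_2$ before any walker can reach $z_1$. With the second turn order, all $m+1$ walkers at $x$ first take one step — landing one walker on each of the $m+1$ vertices on top of the cards of $\xi'$ at $x$, in particular one walker at $x'=\xi(x,m)$ — and then the walker at $x'$ and the passengers replicate the $n$‑walker walk from $\xb'$, using the walkers parked near $x$ to continue past each absorption at $z_2$. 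Since the number of visits to $z_2$ is turn‑order independent, the second replication exhibits at least $m+1\ge k$ visits to $x$ by the $n$‑walker walk from $\xb'$, which is what we wanted.

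The step I expect to be the main obstacle is the bookkeeping in the truncation gadget once $n>1$: unlike in the single‑walker case, a passenger walker may reach $z_2$ (or $z_1$) ``out of turn'', so one must put enough spare walkers at $x$, and enough slack into the definition of $T$ and $S$, to guarantee that both replications stay faithful long enough for the count of visits to $z_2$ to reach $m+1$; one must also verify that the poppings of the stack at $x$ performed by these spare walkers are exactly synchronized with the corresponding transitions to $x$ in the walk being replicated. Everything else is a routine rerun of the single‑walker argument. (As a consistency check, Lemma~\ref{lemmonotonicity} applied to the turn order that moves only the first walker shows that recurrence of the single‑walker walk from $x^{(1)}$ already implies recurrence of the $n$‑walker walk from every $\xb$, which, combined with Lemma~\ref{lemrecurrence invariant under changed position single walker}, settles the case in which the single‑walker walk is recurrent.)
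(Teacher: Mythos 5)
There is a genuine gap, and it sits exactly at the point you dismiss as ``bookkeeping''. Your argument needs the least action principle (Lemma~\ref{lemleast action principle}) to transfer at least $m+1$ absorptions at $z_2$ from the first turn order to \emph{visits to $x$ by the walk from $\xb'$} in the second turn order. In the single-walker proof this transfer is legitimate because the first turn order shows that \emph{every} walker ends at $z_2$ and \emph{none} at $z_1$; by Lemma~\ref{lemleast action principle} the same final position holds for any turn order, so the second replication can never be derailed by a $z_1$ absorption and stays faithful until all walkers are frozen. With $n>1$ walkers your first turn order cannot certify this: a passenger of the recurrent walk from $\xb$ need not individually ever reach $x$ (recurrence of the system does not imply recurrence of each walker), so after the faithful window ends the leftover truncated walkers may well be absorbed at $z_1$. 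Consequently, in the second turn order a walker replicating the walk from $\xb'$ may hit $z_1$ (i.e., the real walker leaves $S$) \emph{before} $k$ absorptions at $z_2$ have occurred; if that walker later re-enters $S$ it pops stacks inside $S$, the truncated dynamics no longer mirror the walk from $\xb'$, and the remaining $z_2$ absorptions certify nothing. The least action principle only equates final tallies, not the order in which absorptions occur, so no amount of extra spare walkers at $x$ or slack in the choice of $T$ and $S$ repairs this: the problematic excursions out of $S$ belong to the walk from $\xb'$, whose recurrence is precisely what is to be proved. (Two smaller points: your reduction should also arrange that the changed coordinate moves to a \emph{neighbor} $x'$ of $x$, since you later need $\xi(x,m)=x'$; and the card indices are off by one under the pop-then-move convention, though the paper's own proof shares that slip.)

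The paper's proof is structured precisely to avoid this obstruction, and you may want to compare. It reduces, as you do, to a change in one coordinate (say the $n$-th), but then splits according to the behaviour of the $(n-1)$-walker walk $(\xb'',\xi)$ obtained by deleting that walker. If $(\xb'',\xi)$ is recurrent, Lemma~\ref{lemmonotonicity} immediately gives recurrence of $(\xb',\xi)$ (your own closing consistency check is this case in the special situation $n-1=1$). If $(\xb'',\xi)$ is transient, the $n-1$ passengers visit every vertex finitely often, so the limiting stack $\xi'=\lim_t\xi_t$ exists; by the abelian property (Lemma~\ref{lemabelian property}) recurrence of $(\xb,\xi)$ forces recurrence of the single-walker walk $(\xb(n),\xi')$, the single-walker Lemma~\ref{lemrecurrence invariant under changed position single walker} transfers this to $(\xb'(n),\xi')$, and Lemma~\ref{lemabelian property} converts back to $(\xb',\xi)$. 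In other words, the passengers are handled either by monotonicity (when they alone are recurrent) or by letting them run off and freezing the stack they leave behind (when they are transient), rather than by carrying them through the finite truncation gadget.
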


\smallskip

\begin{proof}
	It suffices to show that if $\xb$ and $\xb'$ differs by exactly one coordinate  and $(\xb,\xi)$ is recurrent, then $(\xb',\xi)$ is recurrent.
	By Lemma~\ref{lemabelian property}, we can without loss of generality assume that $\xb$ and $\xb'$ differ only at the $n$-th coordinate.
	We now consider the stack walk with the $n$-th walker removed.
	That is,  
	let $\xb'' \in V^{n-1}$ be the vector defined by 
	\. $\xb''(i) \. := \. \xb(i) = \xb'(i)$ \. $(i \in \{1,\ldots,n-1\})$\..
	There are two cases to check.
	
	Firstly, suppose that  $(\xb'',\xi)$ is recurrent. In this case, it follows from Lemma~\ref{lemmonotonicity} that $(\xb',\xi)$ is also recurrent.
	
	Secondly, suppose that $(\xb'',\xi)$ is transient. 
	Let \. $(\Xb_t,\xi_t)_{t \geq 0}$ \. be the stack walk with initialization $(\xb'',\xi)$.
	Since the stack walk is transient,
	the stack $\xi'$ given  by  $\xi' \ := \ \lim_{t \to \infty} \xi_t$ is well defined.
	Since $(\xb,\xi)$ is recurrent, it then follows from Lemma~\ref{lemabelian property} that the single-walker stack walk with initialization $(\xb(n), \xi')$ 
	is recurrent.
	By Lemma~\ref{lemrecurrence invariant under changed position single walker} 
	this implies that the stack walk with 
	initialization $(\xb'(n), \xi')$ 
	is recurrent.
	Finally by Lemma~\ref{lemabelian property} again we conclude that $(\xb',\xi)$ is recurrent. 
	This completes the proof.
\end{proof}

\smallskip

\begin{proof}[Proof of Theorem~\ref{theorem: recurrence is invariant under stack relation}]
	It suffices to consider the case when $\xi'$ is obtained from $\xi$ by a single popping operation at $x$, i.e., $\xi'=\varphi_x(\xi)$.
	By Lemma~\ref{lemrecurrence invariant under changed position} we can without loss of generality 
	assume that all the walkers are initially located at one vertex, i.e., \.  $\xb =\xb' =(x,\ldots,x)$\..
	
	Suppose that $(\xb,\xi)$ is recurrent.
	 Let one walker at $x$ performs one step of the stack walk.  
	Then the  pair of walkers-and-stack  changes 
	to \.  $(\xb_1,\xi_1)$, where 
	\[ \xb_1 \ := \ (\xi(x,0),x,\ldots,x); \qquad \xi_1 \ := \ \varphi_x(\xi) \ = \ \xi'\.,  \]	
	and note that $(\xb_1,\xi_1)$ is recurrent by the transitivity of recurrence.
	It now follows from Lemma~\ref{lemrecurrence invariant under changed position} that \. $(\xb',\xi') \. = \. (\xb',\xi_1)$ \. is recurrent, and the proof is complete.
\end{proof}

\vskip 0.9 cm


\begin{thebibliography}{abcdefghi}


\bibitem[ABO16]{ABO}
G.~Amir,  N.~Berger, and T.~Orenshtein, 
Zero-one law for directional transience of one dimensional excited random walks,
\emph{Ann. Inst. Henri Poincar\'e Probab. Stat.}~\textbf{52} (2016), 47--57.

\bibitem[AO16]{AO}
G.~Amir and T.~Orenshtein, 
Excited Mob,
\emph{Stochastic Process. Appl.}~\textbf{126}~(2016), 439--469.

\bibitem[ACK14]{ACK}
O. Angel, N. Crawford, and G. Kozma, Localization for linearly edge reinforced random walks, \emph{Duke Math. J.}~\textbf{163} (2014),  889--921.

\bibitem[AH11]{AH1}
O. Angel and A. E. Holroyd, Rotor walks on general trees, \emph{SIAM J. Discrete Math.}~\textbf{25}
(2011), 423--446.

\bibitem[AH12]{AH2}
O.~Angel and  A.~E.~Holroyd, 
Recurrent rotor-router configurations,
\emph{J. Comb.}~\textbf{3} (2012),  185--194.



\bibitem[BD14]{BD}
N.~Berger, and J.-D.~Deuschel, 
A quenched invariance principle for non-elliptic random walk in i.i.d. balanced random environment,
\emph{Probab. Theory Related Fields}~\textbf{158} (2014),  91--126.

\bibitem[BL16]{BL}
B.~Bond and L.~Levine, 
Abelian networks I. Foundations and examples, 
\emph{SIAM J. Discrete Math.}~\textbf{30} (2016),  856--874.

\bibitem[BLPS01]{BLPS}
I. Benjamini, R. Lyons, Y. Peres, and O. Schramm, Uniform spanning forests, \emph{Ann.
Probab.}~\textbf{29} (2001), 1--65.


\bibitem[BW03]{BW}
I. Benjamini and D.~B.~Wilson,
Excited random walk. \emph{Electron. Comm. Probab.}~\textbf{8} (2003), 86--92.

\bibitem[CD87]{CD}
D.~Coppersmith and P.~Diaconis,
Random walk with random reinforcement, unpublished (1987).

\bibitem[CGLL18]{CGLL}
S.~H.~Chan, L.~Greco, L.~Levine, and P.~Li,
Random walks with local memory, 29~pp.,
\emph{J. Stat. Phys.}~\textbf{184} (2021), Article 6, 28 pp

\bibitem[Cha19]{Cha1}
S.~H.~Chan,
Rotor walks on transient graphs and the wired spanning forest,
\emph{SIAM J. Discrete Math.}~\textbf{33} (2019), 2369--2393.

\bibitem[Cha20]{Cha2}
S.~H.~Chan,
A rotor configuration with maximum escape rate,
\emph{Electron. Commun. Probab.}~\textbf{25} (2020),  5 pp.

\bibitem[CL18]{CL}
S.~H.~Chan, and L.~Levine, 
Abelian networks IV. Dynamics of nonhalting network,
to appear in \emph{Mem. Amer. Math. Soc.}, 95 pp.,
\href{https://arxiv.org/abs/1804.03322}{\texttt{arXiv:1804.03322}}.

%

\bibitem[DF91]{DF}
P.~Diaconis and W.~Fulton, 
A growth model, a game, an algebra, Lagrange inversion, and characteristic classes. 
\emph{Rend. Sem. Mat. Univ. Politec. Torino}~\textbf{49} (1991), 95--119.

\bibitem[DST15]{DST}
M. Disertori, C. Sabot, P. Tarr\`es, Transience of Edge-Reinforced Random Walk, \emph{Comm. Math. Phys.}~\textbf{339} (2015),  121--148.

\bibitem[Dur19]{Dur}
R.~Durrett, 
\emph{Probability: theory and examples} (fifth ed.), 
Camb. Ser. Stat. Probab. Math.~\textbf{49},
Cambridge Univ. Press, 2019.

\bibitem[FGLP14]{FGLP}
L.~Florescu, S.~Ganguly, L.~Levine, and Y.~Peres, 
Escape rates for rotor walks in $\mathbb{Z}^d$,
\emph{SIAM J. Discrete Math.}~\textbf{28} (2014), 323--334.

\bibitem[FLP16]{FLP}
L.~Florescu,  L.~Levine, and Y.~Peres,
The range of a rotor walk,
\emph{Amer. Math. Monthly}~\textbf{123} (2016),  627--642.


\bibitem[FU96]{FU}
 Y.~Fukai and K.~Uchiyama, Potential kernel for two-dimensional random walk, 
 \emph{Ann. Probab.}~\textbf{24} (1996), 1979--1992.

\bibitem[GMV96]{GMV}
G.~R.~Grimmett,  M.~V.~Menshikov, S.~E.~Volkov,
Random walks in random labyrinths,
\emph{Markov Process. Related Fields}~\textbf{2} (1996),  69--86.

\bibitem[HLM$^+$08]{HLM}
A. E. Holroyd, L. Levine, K. Mesz\'{a}ros, Y. Peres, J. Propp, and D. Wilson, 
{Chip-firing and rotor-routing on directed graphs}, in \emph{In and out of equilibrium. 2}, \emph{Progr. Probab.}~\textbf{60},
Birkh\"{a}user, Basel (2008), 331--364.

\bibitem[HLSH18]{HLSH}
W.~Huss, L.~Levine, and E.~Sava-Huss, 
Interpolating between random walk and rotor walk,
\emph{Random Structures Algorithms}~\textbf{52} (2018),  263--282.

\bibitem[HMSH15]{HMSH}
W. Huss, S. Muller, and E. Sava-Huss, Rotor-routing on Galton-Watson trees, \emph{Electron.
Commun. Probab.}~\textbf{20} (2015), 12~pp.

\bibitem[HP10]{HP}
A.~E.~Holroyd and J.~Propp, 
Rotor walks and Markov chains, 
in \emph{Algorithmic probability and combinatorics},
\emph{Contemp. Math.}~\textbf{520}, \emph{Amer. Math. Soc.}, Providence (2010), 105--126.

\bibitem[HS11]{HS}
 W.~Huss and E.~Sava,
 Rotor-router aggregation on the comb,
 \emph{Electron. J. Combin.}~\textbf{18} (2011), 23 pp.

\bibitem[HS12]{HS2}
W.~Huss and E.~Sava,
Transience and recurrence of rotor-router walks on directed covers of graphs,
\emph{Electron. Commun. Probab.}~\textbf{17} (2012), 13 pp.

\bibitem[HS14]{HS14}
M.~Holmes and  T.~S.~Salisbury, 
Random walks in degenerate random environments, \emph{Canad. J. Math.}~\textbf{66} (2014),  1050--1077.



\bibitem[KOS16]{KOS}
G.~Kozma, T.~Orenshtein, and I.~Shinkar, 
Excited random walk with periodic cookies,
\emph{Ann. Inst. Henri Poincaré Probab. Stat.}~\textbf{52} (2016), 1023--1049.

\bibitem[KP17]{KP}
E.~Kosygina and J.~Peterson, 
Excited random walks with Markovian cookie stacks,
\emph{Ann. Inst. Henri Poincar\'{e} Probab. Stat.}~\textbf{53} (2017), 1458--1497.

\bibitem[KZ08]{KZ08}
E.~Kosygina and M.~Zerner,
Positively and negatively excited random walks on integers, with branching processes,
\emph{Electron. J. Probab.}~\textbf{13} (2008), 1952--1979.

\bibitem[KZ13]{KZ}
E.~Kosygina and M.~Zerner,
Excited random walks: Results, methods, open problems,
\emph{Bull. Inst. Math. Acad. Sin. (N.S.)}~\textbf{8} (2013), 105--157.

\bibitem[Law91]{Law}
G.~F.~Lawler,
\emph{Intersections of random walks}, Probability and its Applications,
Birkh\"{a}user Boston, Inc., 1991.

\bibitem[LL09]{LL}
I. Landau and L. Levine, The rotor-router model on regular trees, \emph{J. Combin. Theory Ser. A}~\textbf{116} (2009), 421--433.

\bibitem[MPRV12]{MPRV}
M.~Menshikov,  S.~Popov,  A.~F.~Ram\'irez, M.~Vachkovskaia,
On a general many-dimensional excited random walk,
\emph{Ann. Probab.}~\textbf{40} (2012),  2106--2130.

\bibitem[MO17]{MO}
S.~M\"uller and  T.~Orenshtein, 
Infinite excursions of rotor walks on regular trees,
\emph{Electron. J. Combin.}~\textbf{24} (2017), no. 2, 21 pp.

\bibitem[PDDK96]{PDDK}
V. Priezzhev, D. Dhar, A. Dhar, and S. Krishnamurthy, Eulerian walkers as a model of
self-organized criticality, \emph{Phys. Rev. Lett.}~\textbf{77} (1996), 5079--5082.

\bibitem[Pem88]{Pem88}
R. Pemantle,
Phase transition in reinforced random walk and RWRE on trees,
\emph{Ann. Probab.}~\textbf{16} (1988), 1229--1242.

\bibitem[Pem91]{Pem}
R. Pemantle, Choosing a spanning tree for the integer lattice uniformly, \emph{Ann. Probab.}~\textbf{19}
(1991), 1559--1574.

\bibitem[Pem07]{Pem07}
R.~Pemantle, 
A survey of random processes with reinforcement,
\emph{Probab. Surv.}~\textbf{4} (2007), 1--79.





\bibitem[Pro03]{Pro}
J. Propp, Random walk and random aggregation, derandomized, online lecture
(2003),
\href{https://www.microsoft.com/en-us/research/video/random-walk-and-randomaggregation-derandomized/}{https://www.microsoft.com/en-us/research/video/random-walk-and-randomaggregation-derandomized/}.

\bibitem[PT17]{PT}
R.~Pinsky and  N.~Travers,
Transience, recurrence and the speed of a random walk in a site-based feedback environment,
\emph{Probab. Theory Related Fields}~\textbf{167} (2017),  917--978.



\bibitem[ST15]{ST}
C. Sabot and Pierre Tarr\`es, Edge-reinforced random walk, Vertex-Reinforced Jump Process and the supersymmetric hyperbolic sigma model,
\emph{J. Eur. Math. Soc.}~\textbf{17} (2015), 2353--2378.

\bibitem[SZ19]{SZ}
C. Sabot, X. Zeng, A random Schr\"odinger operator associated with the Vertex Reinforced Jump Process on infinite graphs, \emph{J. Amer. Math. Soc.}~\textbf{32} (2019), 311--349.


\bibitem[Szn04]{Szn}
A.-S.~Sznitman, Topics in random walks in random environment, in \emph{School and Conference on Probability Theory}, \emph{Abdus Salam Int. Cent. Theoret. Phys.}, Trieste, ICTP Lect.
Notes, XVII (2004), 203--266.

\bibitem[T\'{o}t96]{Tot}
B. T\'{o}th,
Generalized Ray-Knight theory and limit theorems for self-interacting random walks on $\Zb$,
\emph{Ann. Probab.}~\textbf{24}  (1996), 1324--1367.

\bibitem[T\'{o}t01]{Tot01}
B.~T\'{o}th,
Self-interacting random motions--a survey,
in \emph{Random Walks -- A Collection of Surveys, Bolyai Soc. Math. Stud.}~\textbf{9}, J\'{a}nos Bolyai Math. Soc., Budapest (1999),  349--384.


\bibitem[Wil91]{Wil91}
D.~Williams,
\emph{Probability with martingales}.
Cambridge Math. Textbooks, Cambridge Univ. Press, 1991.


\bibitem[Wil96]{Wil}
D.~B.~Wilson, 
Generating random spanning trees more quickly than the cover time, in \emph{Proc.
28th STOC}, ACM, New York (1996), 296--303.

\bibitem[WLB96]{WLB}
I. Wagner, M. Lindenbaum, and A. Bruckstein, Smell as a computational resource--a
lesson we can learn from the ant, in \emph{Israel Symposium on Theory of Computing and
Systems}, IEEE Comput. Soc. Press, Los Alamitos (1996), 219--230.

\bibitem[Zei04]{Zei}
O. Zeitouni, Random walks in random environment, in \emph{Lectures on probability theory
and statistics}, \emph{Lecture Notes in Math.}~\textbf{1837}, Springer, Berlin (2004), 189--312.

\bibitem[Zer05]{Zer}
 M.~Zerner,
 Multi-excited random walks on integers,
 \emph{Probab. Theory Related Fields}~\textbf{133} (2005), 98--112.

\bibitem[Zer06]{Zer06}
M.~Zerner,
Recurrence and transience of excited random walks on $\Zb^d$ 
and
strips, \emph{Electron. Commun. Probab.}~\textbf{11} (2006), 118--128.
\vskip.9cm
\end{thebibliography}
 \end{document}